\documentclass[12pt]{amsart}
\oddsidemargin 0mm
\evensidemargin 0mm
\topmargin 0mm
\textwidth 160mm
\textheight 230mm
\tolerance=9999

\usepackage{amsfonts,amsmath,amssymb,amsthm,amscd,amsxtra}
\usepackage{enumerate,verbatim}

\usepackage[all,2cell,ps]{xy}
\usepackage[pagebackref]{hyperref}

\theoremstyle{plain} 
\newtheorem{theorem}{Theorem}[section]

\newtheorem{proposition}[theorem]{Proposition}
\newtheorem{corollary}[theorem]{Corollary}
\newtheorem{lemma}[theorem]{Lemma}

\theoremstyle{definition}
\newtheorem{definition}[theorem]{Definition}

\newtheorem{example}[theorem]{Example}

\newtheorem{conj}[theorem]{Conjecture}
\newtheorem{question}[theorem]{Question}

\newtheorem{not&def}[theorem]{Notation and Definitions}

\numberwithin{equation}{section}

\def\hwc{\mathsf{(HWC)}}
\def\arc{\mathsf{(ARC)}}

\newcommand{\fm}{\mathfrak{m}}
\newcommand{\fn}{\mathfrak{n}}
\newcommand{\fp}{\mathfrak{p}}
\newcommand{\fq}{\mathfrak{q}}

\newcommand{\CC}{\mathbb{C}}

\DeclareMathOperator{\Ass}{Ass}

\DeclareMathOperator{\ch}{char}

\DeclareMathOperator{\codim}{codim}

\DeclareMathOperator{\cx}{cx}
\DeclareMathOperator{\depth}{depth}

\DeclareMathOperator{\Ext}{Ext}

\DeclareMathOperator{\height}{height}

\DeclareMathOperator{\Hom}{Hom}

\DeclareMathOperator{\len}{\lambda}

\DeclareMathOperator{\pd}{pd}

\DeclareMathOperator{\Q}{Q}

\DeclareMathOperator{\rank}{rank}

\DeclareMathOperator{\Supp}{Supp}
\DeclareMathOperator{\Spec}{Spec}

\DeclareMathOperator{\Tor}{Tor}

\newcommand{\ul}{\underline}

\newcommand{\tf}[2]{{\boldsymbol\bot}_{#1}{#2}}
\newcommand{\tp}[2]{{\boldsymbol\top}_{\hskip-2pt #1}{#2}}

\def\urltilda{\kern -.15em\lower .7ex\hbox{\~{}}\kern .04em}
\def\urldot{\kern -.10em.\kern -.10em}\def\urlhttp{http\kern -.10em\lower -.1ex
\hbox{:}\kern -.12em\lower 0ex\hbox{/}\kern -.18em\lower 0ex\hbox{/}}

\begin{document}

\title[Vanishing of Tor]{Vanishing of Tor, and why we care about it}

\author{Olgur Celikbas}

\address{Department of Mathematics \\
University of Missouri \\
Columbia, MO 65211, USA}
\email{celikbaso@missouri.edu}

\author{Roger Wiegand}
\address{
Department of Mathematics \\
University of Nebraska--Lincoln \\
Lincoln, NE 68588, USA}
\email{rwiegand1@math.unl.edu}

\thanks{Wiegand's research was partially supported by a Simons Collaboration Grant}

\subjclass[2000]{13D03}

\keywords{Hypersurface, complete intersection, vanishing of Tor, rigidity, depth, complexity.}

\date{\today}

\begin{abstract}  Given finitely generated modules $M$ and $N$ over a local ring $R$, the tensor product $M\otimes_RN$ typically has nonzero torsion.  Indeed, the assumption that the tensor product is torsion-free influences the structure and vanishing of the modules $\Tor^R_i(M,N)$ for all $i\geq 1$.  In turn, the vanishing of $\Tor^R_i(M,N)$ imposes restrictions on the depth properties of the modules $M$ and $N$.   These connections  made their first appearance in Auslander's 1961 paper ``Modules over unramified regular local rings''.  We will survey the literature on these topics, with emphasis on progress during the past twenty years.\\

This paper is dedicated to Hans-Bjorn Foxby in recognition of his huge impact on
homological algebra, module theory, and commutative algebra.
\end{abstract}

\maketitle

\section{Introduction}
In the first paragraph of his  famous paper {\em Modules over
unramified  regular local rings} \cite{Au}, Auslander wrote:  ``The
main object of study is what it means about two modules $A$ and $B$
over an unramified regular local ring to assert that the torsion
submodule of $A\otimes B$ is zero.''  Indeed, Auslander showed,
assuming that $A$ and $B$ are finitely generated nonzero modules
over such a ring $R$, that both $A$ and $B$ must
be torsion-free, that $\Tor_i^R(A,B)= 0$ for all $i\ge 1$, and that $\pd_RA+\pd_RB =
\pd_R(A\otimes_RB) < \dim R$, unless $R$ is a field.   In this paper we will discuss some of what
has been done on these topics, for more general local rings, in the
50+ years since Auslander's paper.  Our main theme is the same as
Auslander's:  we assume that the tensor product of two nonzero
modules is torsion-free (or is reflexive, or satisfies a certain
 Serre condition, \dots), and then see what we can learn about the
modules $M$ and $N$.  The vanishing of $\Tor_i^R(M,N)$, for all
$i\geq 1$ (or, in some cases, for all $i\gg0$) will play a pivotal role,
as it did in Auslander's paper.

\subsection*{Notation and Assumptions} The notation $(R,\fm,k)$ indicates that $R$ is a
 local (commutative and
Noetherian) ring with maximal ideal $\fm$ and residue field $k$.
The \emph{torsion} submodule of an $R$-module $M$ is the kernel
$\tp{}M$ of the natural map $M\to \Q(R)\otimes_RM$, where $\Q(R) =
\{\text{non-zerodivisors}\}^{-1}R$, the total quotient ring of $R$.
The module $M$ is \emph{torsion} provided $\tp{}M = M$ and
\emph{torsion-free} provided $\tp{}M = 0$.  The torsion-free module
$M/\tp{}M$ is denoted $\tf{}M$.

\section{Rigidity}  We say that the pair $(M,N)$ of finitely generated
$R$-modules is \emph{rigid} provided the
vanishing of $\Tor_j^R(M,N)$ for some $j\ge 1$ forces $\Tor_i^R(M,N)
= 0$ for all $i\ge j$;  and $M$ is \emph{rigid} provided the pair
$(M,N)$ is rigid for every finitely generated $N$.  In the following theorem, we do not
assume that the ring is a regular local ring, but the argument is
essentially the same as Auslander's proof of \cite[Lemma 3.1]{Au}.

\begin{theorem}[Auslander, 1961]\label{thm:rigid-tf}Let $M$ and $N$
be nonzero finitely
generated modules over a reduced local ring $R$. Assume that either
$\tf{}M$ or $\tf{}N$ is  rigid and that $M\otimes_RN$ is
torsion-free.  Then:
\begin{enumerate}[(i)]
\item Both $M$ and $N$ are torsion-free.
\item $\Tor_i^R(M,N)=0$ for all $i\ge 1$.
\end{enumerate}
\end{theorem}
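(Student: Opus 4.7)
The plan is to follow Auslander's strategy for \cite[Lemma~3.1]{Au}, with the rigidity hypothesis replacing the finite-projective-dimension shortcut that was available in the regular local case. The starting observation is that $R$ being reduced means $R_\fp$ is a field at every minimal prime $\fp$, so for all finitely generated $X,Y$ and all $i\ge 1$, $\Tor_i^R(X,Y)_\fp = \Tor_i^{R_\fp}(X_\fp,Y_\fp) = 0$. In particular $\Tor_i^R(M,N)$ is torsion for $i\ge 1$, and any tensor product involving a torsion module is torsion.

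Next I would peel off the torsion submodules. Applying $M\otimes_R(-)$ to $0\to\tp{}N\to N\to\tf{}N\to 0$, the map $M\otimes_R\tp{}N\to M\otimes_R N$ goes from a torsion module into the torsion-free module $M\otimes_R N$ and is therefore zero. Exactness gives $M\otimes_R N\cong M\otimes_R\tf{}N$ together with a surjection $\Tor_1^R(M,\tf{}N)\twoheadrightarrow M\otimes_R\tp{}N$. Applying the same reasoning symmetrically to $0\to\tp{}M\to M\to\tf{}M\to 0$ tensored with $\tf{}N$ yields $\tf{}M\otimes_R\tf{}N\cong M\otimes_R N$ (still torsion-free) and a surjection $\Tor_1^R(\tf{}M,\tf{}N)\twoheadrightarrow \tp{}M\otimes_R\tf{}N$.

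Assume without loss of generality that $\tf{}N$ is rigid. Since rigidity upgrades a single vanishing $\Tor_j = 0$ to vanishing of all $\Tor_i$ with $i\ge j$, it suffices to prove $\Tor_1^R(\tf{}M,\tf{}N) = 0$. Taking $0\to L\to G\to\tf{}N\to 0$ with $G$ free of finite rank and tensoring with $\tf{}M$, I would identify $\Tor_1^R(\tf{}M,\tf{}N)$ with the torsion submodule of $L\otimes_R\tf{}M$, since $G\otimes_R\tf{}M$ is torsion-free and $\Tor_1^R$ is torsion. The task is then to show $L\otimes_R\tf{}M$ has no torsion, and this is the main obstacle: the torsion-freeness of $\tf{}M\otimes_R\tf{}N$ must be leveraged together with the observation that the syzygy $L$ is itself rigid (via $\Tor_j^R(X,L)\cong\Tor_{j+1}^R(X,\tf{}N)$ for $j\ge 1$) to run Auslander's inductive argument and eliminate all torsion in $L\otimes_R\tf{}M$.

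Once $\Tor_i^R(\tf{}M,\tf{}N) = 0$ for $i\ge 1$, the surjection above forces $\tp{}M\otimes_R\tf{}N = 0$. But $M\otimes_R N\ne 0$ by Nakayama, so $\tf{}N\ne 0$; and if $\tp{}M$ were nonzero, then both $\tp{}M$ and $\tf{}N$ would contain $\fm$ in their support, giving $\tp{}M\otimes_R\tf{}N\ne 0$. Hence $\tp{}M = 0$. Consequently $\Tor_1^R(M,\tf{}N) = \Tor_1^R(\tf{}M,\tf{}N) = 0$, so the companion surjection $\Tor_1^R(M,\tf{}N)\twoheadrightarrow M\otimes_R\tp{}N$ collapses to $M\otimes_R\tp{}N = 0$, forcing $\tp{}N = 0$ by the same support argument. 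Therefore $M$ and $N$ are torsion-free, and $\Tor_i^R(M,N) = \Tor_i^R(\tf{}M,\tf{}N) = 0$ for all $i\ge 1$.
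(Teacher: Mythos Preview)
Your argument has a genuine gap at the step you yourself flag as ``the main obstacle.''  Taking a free \emph{cover} $0\to L\to G\to\tf{}N\to 0$ and then trying to prove that $L\otimes_R\tf{}M$ is torsion-free leads nowhere: the ``inductive argument'' you invoke from \cite{Au} was an induction on projective dimension, which was available over a regular local ring but is not available here.  Passing from $\tf{}N$ to its syzygy $L$ does not decrease any invariant, so there is nothing to induct on; you are simply re-posing the original problem with $L$ in place of $\tf{}N$.

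The idea you are missing---and the place where the hypothesis that $R$ is reduced is actually used---is to go in the \emph{opposite} direction: embed $\tf{}N$ \emph{into} a free module rather than cover it by one.  Since $\Q(R)$ is a product of fields, the torsion-free module $\tf{}N$ embeds in a finitely generated free module, giving $0\to\tf{}N\to G\to V\to 0$.  Tensoring with $\tf{}M$ yields an injection $\Tor_1^R(\tf{}M,V)\hookrightarrow\tf{}M\otimes_R\tf{}N$.  The source is torsion and the target is torsion-free, so $\Tor_1^R(\tf{}M,V)=0$; rigidity (of whichever of $\tf{}M$, $\tf{}N$ is rigid) then forces $\Tor_2^R(\tf{}M,V)=0$, i.e.\ $\Tor_1^R(\tf{}M,\tf{}N)=0$.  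From here your final paragraph goes through.  The rest of your write-up (peeling off torsion, the support argument for $\tp{}M=\tp{}N=0$) is fine and matches the paper.
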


\noindent We'll state the first step of the proof as a lemma, since it will come up several times in this survey.

\begin{lemma}\label{lem:tf-reduc} Let $M$ and $N$ be finitely
generated modules over a local ring $R$, and suppose that $M\otimes_RN$ is torsion-free.  Then the natural maps in the diagram
\begin{equation}\notag{}
\begin{CD}\label{CD:tf-reduc}
M\otimes_RN &\quad \longrightarrow &\quad \tf{}M\otimes_RN\\
\downarrow  && \downarrow \\
M\otimes_R\tf{}N  &\quad  \longrightarrow &\quad \tf{}M\otimes_R\tf{}N
\end{CD}
\end{equation}
are all isomorphisms.  In particular, all four modules are torsion-free.
\end{lemma}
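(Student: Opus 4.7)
The plan is to leverage one simple observation: tensoring \emph{any} module with a torsion module yields a torsion module. Indeed, if $T$ is torsion and $X$ is arbitrary, then each elementary tensor $t\otimes x$ is killed by a non-zerodivisor (one annihilating $t$), and any finite sum of such elementary tensors is killed by the product of the associated non-zerodivisors, which is again a non-zerodivisor. This observation will do essentially all the work.

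First, I would tensor the canonical sequence $0\to \tp{}M\to M\to \tf{}M\to 0$ on the right with $N$. The resulting right-exact sequence identifies the kernel of the top horizontal arrow $M\otimes_R N\to (\tf{}M)\otimes_R N$ with the image of the induced map $\tp{}M\otimes_R N\to M\otimes_R N$. By the observation above, that image is a torsion submodule of $M\otimes_R N$; since $M\otimes_R N$ is torsion-free by hypothesis, the image vanishes and the top arrow is an isomorphism. The symmetric argument, tensoring $0\to\tp{}N\to N\to \tf{}N\to 0$ on the left with $M$, shows that the left-hand arrow $M\otimes_R N\to M\otimes_R\tf{}N$ is also an isomorphism.

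To close the square, I would apply the same trick one more time. Since $(\tf{}M)\otimes_R N\cong M\otimes_R N$ is now known to be torsion-free, tensoring $0\to \tp{}N\to N\to \tf{}N\to 0$ with $\tf{}M$ shows that the map $(\tf{}M)\otimes_R N\to (\tf{}M)\otimes_R\tf{}N$ is an isomorphism as well. Commutativity of the square is automatic from the naturality of the tensor product, and all four corners, being isomorphic to $M\otimes_R N$, are torsion-free.

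There is really no serious obstacle. The one point worth flagging is that the preservation-of-torsion observation relies on the non-zerodivisors of $R$ forming a multiplicatively closed subset, which keeps sums of ``torsion-annihilated'' elements under control. Since $R$ is not assumed reduced in the lemma, this multiplicative-closure formulation is exactly the right notion of ``torsion'' to use; no hypothesis beyond the torsion-freeness of $M\otimes_R N$ is needed.
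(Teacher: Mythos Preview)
Your argument is correct and is essentially the paper's own proof: both tensor the short exact sequence $0\to\tp{}M\to M\to\tf{}M\to 0$ with $N$, observe that the image of $(\tp{}M)\otimes_R N$ in the torsion-free module $M\otimes_R N$ must vanish, invoke symmetry for the second arrow, repeat once more for a third arrow, and deduce the fourth from commutativity. The only cosmetic difference is that the paper establishes the bottom arrow directly and gets the right-hand one by commutativity, whereas you do the reverse.
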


\begin{proof}Tensoring the short exact sequence
\begin{equation}\label{eq:tor-tf}
0 \to \tp{}M \to M \to \tf{}M \to 0
\end{equation}
with $N$, we obtain an exact sequence
$$
\tp{}M\otimes_RN \overset{\alpha}{\to} M\otimes_RN \overset{\beta}{\to} \tf{}M\otimes_RN \to 0 \,.
$$
Since  $\tp{}M\otimes_RN$ is torsion and $M\otimes_RN$ is torsion-free, $\alpha$ must  be the zero-map, and hence $\beta$ (the top arrow in the commutative diagram) is an isomorphism.  By symmetry, so is the left-hand arrow.  Now we know that $M\otimes_R\tf{}N$ is torsion-free, and, on tensoring \eqref{eq:tor-tf} with $\tf{}N$, we see by the same argument that the bottom arrow is an isomorphism.  The right-hand arrow is an isomorphism because the diagram commutes.
\end{proof}
\begin{proof}[Proof of Theorem~\ref{thm:rigid-tf}]
Using the fact that $\Q(R)$ is a direct product of fields, we embed $\tf{}M$ and $\tf{}N$ into
finitely generated  free modules $F$ and $G$, respectively, and get short exact sequences:
\begin{equation}\label{eq:M-in-free}
0 \to \tf{}M \to F \to U \to 0
\end{equation}
\begin{equation}\label{eq:N-in-free}
0 \to \tf{}N \to F \to V \to 0
\end{equation}

Supposing that $\tf{}M$ is rigid, we apply $\tf{}M\otimes_R-$ to \eqref{eq:N-in-free} to obtain an injection $\Tor_1^R(\tf{}M,V)\hookrightarrow \tf{}M\otimes_R\tf{}N$.  Since $\Tor_1^R(\tf{}M,V)$ is torsion (again, because $\Q(R)$ is a direct product of fields) and $ \tf{}M\otimes_R\tf{}N$ is torsion-free, $\Tor_1^R(\tf{}M,V)$ must be $0$.  Therefore $\Tor_2^R(\tf{}M,V)=0$, whence $\Tor_1^R(\tf{}M,\tf{}N)=0$. If, on the other hand, $\tf{}N$ is rigid, we apply $\underline{\ \ }\otimes_RN$ to \eqref{eq:M-in-free} and reach the same conclusion.

Now, tensoring \eqref{eq:tor-tf} with $\tf{}N$, we get an injection
$\tp{}M\otimes_R\tf{}N\hookrightarrow M\otimes_R\tf{}N$.  Therefore $\tp{}M\otimes_R\tf{}N = 0$.  But $\tf{}N\ne 0$, else $M\otimes_RN$ would be a nonzero torsion module.  Therefore $\tp{}M$ must be $0$, that is, $M$ is torsion-free, and by symmetry $N$ is torsion-free as well.  Now $\Tor_1^R(M,N) = 0$, and either $M$ or $N$ is rigid, whence $\Tor_i^R(M,N) = 0$ for all $i \ge 1$.
\end{proof}

A regular local ring $(S,\fn,k)$ is said to be \emph{unramified} provided either $S$ is equi-characteristic (i.e., contains a field) or else $\ch S=0$, $\ch k=p$ and $p \notin \fn^2$.
Here is Auslander's famous ``Rigidity Theorem'' \cite[Corollary 2.2]{Au}:

\begin{theorem}[Auslander, 1961]\label{thm:unram-rigid} Let $M$ be a finitely generated torsion-free module over an unramified regular local ring.  Then $M$ is rigid.
\end{theorem}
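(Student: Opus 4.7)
The plan is to induct on $\dim S$, using the unramified hypothesis to ensure it is preserved at each inductive step.

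I first reduce to the $\Tor_1$ case: the syzygy shift $\Tor_i^S(M,\syz^{j-1}N)\cong\Tor_{i+j-1}^S(M,N)$ (valid for $i\geq 1$) converts vanishing of $\Tor_j^S(M,N)$ into vanishing of $\Tor_1^S(M,\syz^{j-1}N)$, so it suffices to show that for every finitely generated $N'$, $\Tor_1^S(M,N')=0$ implies $\Tor_i^S(M,N')=0$ for all $i\geq 1$. The base cases $\dim S\leq 1$ are immediate, since over a field or a discrete valuation ring every torsion-free module is free.

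For the inductive step, with $\dim S\geq 2$, I choose $x\in\fm\setminus\fm^2$ so that $\bar S := S/xS$ is again an unramified regular local ring of dimension one less. In the equicharacteristic case any such $x$ works, while in the unramified mixed characteristic case (where $p\notin\fm^2$) I take $x=p$, making $\bar S$ equicharacteristic and hence trivially unramified. Since $S$ is a domain and $M$ is torsion-free, $x$ is $M$-regular, so the short exact sequence $0\to S\xrightarrow{x}S\to\bar S\to 0$ gives $\Tor_i^S(M,\bar S)=0$ for all $i\geq 1$. The change of rings spectral sequence then collapses to yield natural isomorphisms $\Tor_i^S(M,N')\cong\Tor_i^{\bar S}(M/xM,N')$ for every $\bar S$-module $N'$.

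The main obstacle is twofold: a general $N$ is not an $\bar S$-module, and $M/xM$ may fail to be torsion-free over $\bar S$, so the inductive hypothesis does not immediately apply. For the first issue, I filter $N$ using the short exact sequences $0\to N[x]\to N\to xN\to 0$ and $0\to xN\to N\to N/xN\to 0$; both $N[x]$ (the $x$-torsion submodule) and $N/xN$ are annihilated by $x$ and so are $\bar S$-modules, to which the change-of-rings isomorphism applies. Chasing the resulting long exact sequences in $\Tor^S(M,-)$, together with a Nakayama-type argument exploiting $x\in\fm$, should propagate vanishing of $\Tor_1^S(M,N)$ to vanishing of all $\Tor_i^S(M,N)$. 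For the second issue, I would replace $M/xM$ by its torsion-free quotient $\tf{}(M/xM)$ over $\bar S$ and use the short exact sequence $0\to\tp{}(M/xM)\to M/xM\to\tf{}(M/xM)\to 0$, along with the reasoning of Lemma~\ref{lem:tf-reduc}, to maintain the Tor bookkeeping under this replacement.
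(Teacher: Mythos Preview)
The paper does not supply a proof of this theorem; it is quoted from Auslander's 1961 paper \cite[Corollary~2.2]{Au}.  So there is no ``paper's own proof'' to compare against, and I will instead assess your proposal on its merits and contrast it with Auslander's actual approach.

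Your outline has genuine gaps at precisely the two places you flag as obstacles.  First, the filtration step: from $\Tor_1^S(M,N)=0$ alone you cannot deduce $\Tor_1^S(M,N[x])=0$ or $\Tor_1^S(M,N/xN)=0$.  Concretely, the second short exact sequence gives
\[
0=\Tor_1^S(M,N)\to \Tor_1^S(M,N/xN)\to M\otimes_S xN \to M\otimes_S N,
\]
so $\Tor_1^S(M,N/xN)$ is the kernel of $M\otimes_S xN\to M\otimes_S N$, which is typically nonzero.  Without vanishing of $\Tor_1^{\bar S}(\bar M,-)$ on one of the $\bar S$-modules $N[x]$ or $N/xN$, there is no starting point for the inductive hypothesis on $\bar S$, and the promised ``Nakayama-type'' conclusion never gets off the ground.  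Second, the replacement of $\bar M=M/xM$ by $\tf{}\bar M$: Lemma~\ref{lem:tf-reduc} concerns the situation where a \emph{tensor product} is torsion-free, and gives no control over $\Tor_i^{\bar S}(\tp{}\bar M,-)$; the short exact sequence $0\to\tp{}\bar M\to\bar M\to\tf{}\bar M\to 0$ does not let you transfer rigidity from $\tf{}\bar M$ to $\bar M$.

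Auslander's proof inducts on $\dim S$ in a different way: rather than passing to a quotient $S/xS$, he \emph{localizes} at non-maximal primes.  Since every localization $S_\fp$ is again an unramified regular local ring of smaller dimension (this is where ``unramified'' is used: it is preserved under localization, whereas your choice $x=p$ in mixed characteristic is rigid and gives no flexibility with respect to $N$), the inductive hypothesis forces $\Tor_i^S(M,N)_\fp=0$ for $i>j$ and all $\fp\neq\fm$, i.e., these Tors have finite length.  The heart of the proof is then a separate argument handling the finite-length case, which in Auslander's treatment uses passage to the associated graded ring and a dimension count (later recast by Lichtenbaum and Hochster in terms of the partial Euler characteristics $\chi_j^S$; cf.\ Theorem~\ref{thm:chi-quartz}).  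That step is where the real content lies, and it has no counterpart in your sketch.
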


In 1966 Lichtenbaum removed the requirements that the regular local ring be unramified and that the module be torsion-free. He showed  \cite[Theorem 3]{Li}:

\begin{theorem}[Lichtenbaum, 1966]\label{thm:Li-i>>0} Let $(S,\fn)$ be an unramified regular local ring, let $f$ a nonzero element of $\fn$, and put $R = S/(f)$.  Let $M$ and $N$ be finitely generated $R$-modules such that $\Tor_i^R(M,N) = 0$ for all $i\gg 0$.  Then the pair $(M, N)$ is rigid.
\end{theorem}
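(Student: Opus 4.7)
The plan is to relate the Tor modules over $R$ to those over the unramified regular local ring $S$ and then invoke Auslander's Theorem~\ref{thm:unram-rigid}. The main tool is the change-of-rings long exact sequence arising from the Cartan--Eilenberg spectral sequence
\[
\Tor_p^R(\Tor_q^S(R,M),N)\Longrightarrow \Tor_{p+q}^S(M,N).
\]
Because $f$ is a non-zero-divisor on $S$ that annihilates $M$, the groups $\Tor_q^S(R,M)$ vanish for $q\geq 2$ and both coincide with $M$ for $q=0,1$; the spectral sequence is therefore concentrated on the two rows $q=0,1$, and its sole nontrivial differential $d^2$ produces the exact sequence
\[
\cdots\to\Tor_{n+1}^R(M,N)\xrightarrow{\alpha_{n+1}}\Tor_{n-1}^R(M,N)\to\Tor_n^S(M,N)\to\Tor_n^R(M,N)\xrightarrow{\alpha_n}\Tor_{n-2}^R(M,N)\to\cdots
\]
in which the $\alpha_n$ are connecting maps of Eisenbud type that lower homological degree by two.

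The first step is to exploit the regularity of $S$. Since $\Tor_n^S(M,N)=0$ for every $n>\dim S$, the long exact sequence forces $\alpha_n$ to be an isomorphism for every $n\geq\dim S+2$, so the sequence $\{\Tor_n^R(M,N)\}$ is 2-periodic for $n\geq\dim S$. The hypothesis $\Tor_i^R(M,N)=0$ for $i\gg 0$ applies to both parities, and so the periodicity cascades the vanishing downward to give $\Tor_i^R(M,N)=0$ for every $i\geq\dim S$.

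The second step is to descend this vanishing to the hypothesized index $j$. The key observation is that if $\Tor_j^R(M,N)=\Tor_{j+1}^R(M,N)=0$ for some $j\geq 1$, then the long exact sequence at $n=j+1$ forces $\Tor_{j+1}^S(M,N)=0$; since the first $S$-syzygy of $M$ is torsion-free (as $S$ is a domain) and therefore rigid over $S$ by Theorem~\ref{thm:unram-rigid}, the vanishing of $\Tor^S$ propagates upward to kill $\Tor_I^S(M,N)$ for every $I\geq j+1$, and feeding this back into the long exact sequence at $n=j+2$ squeezes out $\Tor_{j+2}^R(M,N)=0$. Iterating gives $\Tor_i^R(M,N)=0$ for all $i\geq j$, so the theorem reduces to promoting the single hypothesized vanishing $\Tor_j^R(M,N)=0$ to a pair of consecutive vanishings at degrees $j$ and $j+1$. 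I would carry out this promotion by a downward induction, starting from the consecutive pair of vanishings already available at degrees $\dim S$ and $\dim S+1$ and using the long exact sequence together with Auslander-rigidity on the $S$-side to slide the pair of vanishings down to $(j,j+1)$ one step at a time.

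The hardest part will be the interplay between the two-step structure of $\alpha_n$ on the $R$-side and Auslander's one-step rigidity on the $S$-side: matching the parities in the downward induction so that the hypothesized vanishing $\Tor_j^R(M,N)=0$ actually supplies the missing boundary condition is where the essential combinatorial content of the proof lies, particularly in the boundary case $j=1$, where Auslander-rigidity over $S$ does not reach degree $1$ and one must argue directly via the Eisenbud operator $\alpha_2\colon\Tor_2^R(M,N)\to M\otimes_RN$.
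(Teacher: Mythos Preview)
The paper does not actually prove Theorem~\ref{thm:Li-i>>0}; it is stated with a citation to Lichtenbaum's paper and the remark (after the proof of Theorem~\ref{thm:c+1-rigid}) that Lichtenbaum used the same long exact sequence \eqref{eq:les-cor}.  So there is no ``paper's proof'' to match against, and I can only assess your outline on its own merits.

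Your first two steps are fine.  The 2-periodicity of $\Tor^R_i(M,N)$ for $i\ge\dim S$, together with the hypothesis of eventual vanishing, does give $\Tor_i^R(M,N)=0$ for all $i\ge\dim S$; and two \emph{consecutive} vanishings $\Tor_j^R=\Tor_{j+1}^R=0$ do yield $\Tor_{j+1}^S=0$, whence (via the torsion-free first $S$-syzygy and Theorem~\ref{thm:unram-rigid}) $\Tor_i^S=0$ for $i\ge j+1$, and then the long exact sequence forces $\Tor_i^R=0$ for $i\ge j$.  That is exactly Murthy's argument for the codimension-one case.

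The genuine gap is in your ``downward induction.''  Suppose $m$ is least with $\Tor_i^R(M,N)=0$ for all $i\ge m$, and $m>j+1$.  From \eqref{eq:les-cor} one gets $\Tor_{m+1}^S=0$ and an isomorphism $\Tor_{m-1}^R\cong\Tor_m^S$.  To step down you must show $\Tor_m^S=0$.  But Auslander rigidity only pushes $S$-vanishing \emph{upward}: from $\Tor_{m+1}^S=0$ you recover $\Tor_i^S=0$ for $i\ge m+1$, nothing about $\Tor_m^S$.  The lone hypothesis $\Tor_j^R=0$ gives only $\Tor_{j+1}^S\hookrightarrow\Tor_{j+1}^R$, which is no help until you already know $\Tor_{j+1}^R=0$.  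There is no mechanism in your outline that moves the window $\{m,m+1\}$ down to $\{m-1,m\}$, and this is not a parity bookkeeping issue---it is a missing idea.

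What Lichtenbaum actually supplies (in the same paper) is the non-negativity of the partial Euler characteristics $\chi_j^S$ and the characterization $\chi_j^S=0\iff\Tor_j^S=0$; the survey records this as Theorem~\ref{thm:chi-quartz}\eqref{item:unram-chi-van}.  After a reduction to the finite-length situation, a length count along \eqref{eq:les-cor} using those inequalities is what bridges the single $R$-vanishing to the needed $S$-vanishing.  Your sketch never invokes any length or $\chi_j$ argument, and without one the downward step does not go through.
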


Since the completion of every regular local ring can be written as $S/(f)$, where $(S,\fn)$ is an unramified regular local ring and $f$ is a nonzero element of $\fn$, we have \cite[Corollary 1]{Li}:

\begin{corollary}\label{cor:ram-rigid} Every finitely generated module over a regular local ring is rigid.
\end{corollary}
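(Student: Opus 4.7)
The plan is to reduce the statement to Lichtenbaum's Theorem~\ref{thm:Li-i>>0} by completing and then appealing to the fact, noted just before the corollary, that every complete regular local ring has a presentation as a hypersurface over an unramified regular local ring. So fix a regular local ring $(R,\fm)$, finitely generated $R$-modules $M$ and $N$, and an index $j\ge 1$ with $\Tor_j^R(M,N)=0$; the goal is $\Tor_i^R(M,N)=0$ for all $i\ge j$.

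First I would pass to the $\fm$-adic completion $\widehat R$. Because $\widehat R$ is faithfully flat over $R$, one has $\Tor_i^{\widehat R}(\widehat M,\widehat N)\iso \widehat R\otimes_R\Tor_i^R(M,N)$ for every $i$, and a $\Tor$ module vanishes over $R$ if and only if its completion vanishes over $\widehat R$. In particular $\Tor_j^{\widehat R}(\widehat M,\widehat N)=0$, and it is enough to prove the desired vanishing over $\widehat R$.

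Next, write $\widehat R=S/(f)$ where $(S,\fn)$ is an unramified regular local ring and $0\ne f\in \fn$ (if $\widehat R$ itself is already unramified regular, take $S=\widehat R[[y]]$ and $f=y$; otherwise use the presentation provided by the Cohen structure theorem as in the paragraph preceding the corollary). Since $\widehat R$ is regular, it has finite global dimension, so $\pd_{\widehat R}\widehat M<\infty$ and hence $\Tor_i^{\widehat R}(\widehat M,\widehat N)=0$ for all $i\gg 0$. The hypotheses of Theorem~\ref{thm:Li-i>>0} are therefore satisfied by the pair $(\widehat M,\widehat N)$ over $\widehat R=S/(f)$, and that theorem gives rigidity of this pair.

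Combining rigidity with $\Tor_j^{\widehat R}(\widehat M,\widehat N)=0$ yields $\Tor_i^{\widehat R}(\widehat M,\widehat N)=0$ for all $i\ge j$, and descending along the faithfully flat map $R\to\widehat R$ returns the desired conclusion $\Tor_i^R(M,N)=0$ for all $i\ge j$. I do not anticipate a genuine obstacle; the only point that requires a moment's care is ensuring that one can arrange $f\ne 0$ in the presentation $\widehat R=S/(f)$, which is handled by the device of appending a variable in the case where $\widehat R$ happens to already be unramified.
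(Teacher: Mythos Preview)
Your argument is correct and follows the same route as the paper: pass to the completion, invoke the structure fact (stated just before the corollary) that $\widehat R$ can be written as $S/(f)$ with $S$ unramified regular and $f\ne 0$, observe that regularity of $\widehat R$ forces $\Tor_i^{\widehat R}(\widehat M,\widehat N)=0$ for $i\gg0$, and then apply Theorem~\ref{thm:Li-i>>0}. The paper records this as an immediate consequence without spelling out the faithfully flat descent or the finite-global-dimension step; your write-up simply makes those routine points explicit.
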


\begin{corollary}[Auslander and Lichtenbaum]\label{cor:Aus-van} Let $M$ and $N$ be nonzero finitely generated modules over a regular local ring.  If $M\otimes_RN$ is torsion-free, then $\Tor_i^R(M,N) = 0$ for all $i \ge 1$, and both $M$ and $N$ are torsion-free.
\end{corollary}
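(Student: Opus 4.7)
The plan is to deduce this corollary directly by combining the two immediately preceding results: the general torsion-free-tensor-product principle in Theorem~\ref{thm:rigid-tf}, and the universal rigidity statement in Corollary~\ref{cor:ram-rigid}.

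First I would check that the hypotheses of Theorem~\ref{thm:rigid-tf} are available over a regular local ring $R$. Since a regular local ring is an integral domain, it is in particular reduced, so the blanket hypothesis of Theorem~\ref{thm:rigid-tf} is satisfied. Next, I would apply Corollary~\ref{cor:ram-rigid} to the finitely generated $R$-module $\tf{}M$ to conclude that $\tf{}M$ is rigid. (Either $\tf{}M$ or $\tf{}N$ would do; by symmetry it is immaterial which one we pick.)

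With these two inputs, the hypotheses of Theorem~\ref{thm:rigid-tf} are met: $M$ and $N$ are nonzero finitely generated modules over the reduced local ring $R$, the module $\tf{}M$ is rigid, and $M\otimes_R N$ is torsion-free by assumption. Invoking that theorem then yields both conclusions simultaneously, namely that $M$ and $N$ are torsion-free and that $\Tor_i^R(M,N)=0$ for every $i\ge 1$.

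There is no real obstacle once Corollary~\ref{cor:ram-rigid} is on the table, since the corollary is essentially a repackaging of the Auslander--Lichtenbaum rigidity result together with Auslander's torsion-free argument. The only nontrivial ingredient hiding in the chain is Lichtenbaum's extension (Theorem~\ref{thm:Li-i>>0} and its Corollary~\ref{cor:ram-rigid}) of Auslander's original Theorem~\ref{thm:unram-rigid} to all regular local rings; without it, the argument would have been restricted to the unramified case, since Theorem~\ref{thm:rigid-tf} requires rigidity of $\tf{}M$ (or $\tf{}N$) as a genuine hypothesis.
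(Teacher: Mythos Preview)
Your proposal is correct and is precisely the derivation the paper intends: Corollary~\ref{cor:Aus-van} is stated without proof because it follows immediately by feeding Corollary~\ref{cor:ram-rigid} (rigidity of all finitely generated modules over any regular local ring) into Theorem~\ref{thm:rigid-tf}, using that a regular local ring is a domain and hence reduced. Your closing remark about why Lichtenbaum's contribution is needed is also on point.
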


Much of our focus in this survey is on hypersurfaces, and, more generally, complete intersections. Here are the relevant definitions:

\begin{definition} The \emph{codimension} $\codim(R)$ of a local ring $(R,\fm)$ is the difference $\dim R - \nu_R(\fm)$, where $\nu$ denotes the minimal number of generators required.  A local ring $(R,\fm)$ is a \emph{complete intersection}  provided its $\fm$-adic completion $\widehat R$ is isomorphic to $S/(\underline f)$, where $(S,\fn)$ is a complete regular local ring and $\underline f = (f_1,\dots,f_c)$ is a regular sequence in $\fn$.  The integer $c$ is the \emph{relative codimension} of $\widehat R$ \emph{in} $S$.  We always have  $\codim R \le c$, with equality if and only if $(\underline f)\subseteq \fn^2$.  A \emph{hypersurface} is a complete intersection whose completion has relative codimension one in a complete regular local ring.
\end{definition}

\begin{example}\label{eg:node} Rigidity can fail over hypersurfaces.  Let $R = k[\![x,y]\!]/(xy)$, where $k$ is a field.  For the modules  $M:=R/(x)$  and $N := R/(x^2)$, one checks that
\begin{equation}\notag{}
\Tor^R_i(M,M) \cong \Tor^R_i(M,N) \cong  \begin{cases}{k \text{ if $i$ is odd and positive}}\\ {0  \text{ if $i$ is even and positive}}\end{cases}\,.
\end{equation}
Notice that the coset $x+(x^2)\in N$ is killed by the non-zerodivisor $x+y$.  Thus $\tp{}N \ne 0$, even though $M\otimes_RN \cong M$, which \emph{is} torsion-free.  So, even for one-dimensional hypersurfaces, one can have nonzero modules, one of which has torsion, such that the tensor product is torsion-free.
\end{example}

This example exploits the fact that $R$ is not a domain and, of course, the fact that $M$ has infinite projective dimension.  Here is an example over a one-dimensional domain \cite[Example 4.8]{HW1}:

\begin{example}[Huneke and Wiegand, 1994]\label{eg:twisted} Let $R = k[\![t^3,t^4,t^5]\!]$, with canonical ideal $\omega_R = (t^3,t^4)$.  There exists a finitely generated $R$-module $M$ with nonzero torsion such that $M\otimes_R\omega_R$ is reflexive.
\end{example}

We know of no example over a hypersurface, or even over a complete intersection, of two modules, \emph{both} with nonzero torsion, whose tensor product is torsion-free.

\begin{question}\label{q:one-tf} Let $R$ be a  complete intersection, and let $M$ and $N$ be nonzero $R$-modules such that $M\otimes_RN$ is torsion-free.  Must \emph{at least one} of $M$, $N$ be torsion-free?  What if $R$ is a hypersurface?  What if $R$ is a domain?
What if $\dim R = 1$?
\end{question}

Once we get away from complete intersections, things can go awry in a hurry,
even for one-dimensional  domains \cite{Constapel}:

\begin{example}[Constapel 1996]\label{eg:Constapel} Let $R:=k[\![t^8,\dots,t^{14}]\!]$.  This ring is Gorenstein since the semigroup of exponents is symmetric \cite{Ku70}.  There exist finitely generated $R$-modules $M$ and $N$, both with nonzero torsion,  such that $M\otimes_RN$ is torsion-free.
\end{example}

Given a positive integer $e$, we say the pair $(M,N)$ of finitely generated modules over a local ring $R$ is $e$-\emph{rigid}, provided the vanishing of $\Tor_i^R(M,N)$, for $i = j +1,\dots,  j+e$ (with $j\ge 0$), forces $\Tor_i^R(M,N) = 0$ for all $i>j$.  The module $M$ is $e$-\emph{rigid} provided $(M, N)$ is $e$-rigid for all finitely generated $R$-modules $N$.  Even before Lichtenbaum's results, Murthy \cite{Mu} proved $(c+1)$-rigidity  over any complete intersection of relative codimension $c$ in an unramified regular local ring. Perhaps in anticipation of Lichtenbaum's theorem, Murthy stated his result as follows \cite[Theorem 1.6]{Mu}:

\begin{theorem}[Murthy, 1963]\label{thm:murthy-c+1}Let $(S,\fn)$ be a local domain for which every torsion-free module is rigid, and let $R=S/(\underline f)$, where $\underline f = f_1,\dots,f_c$ is a regular sequence in $\fn$. Suppose $j\ge 1$ and $\Tor_i^R(M,N)=0$ for $i = j, j+1, \dots, j+c$.  Then $\Tor_i^R(M,N) = 0$ for all $i \ge j$.
\end{theorem}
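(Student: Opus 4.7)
The approach is an induction on the codimension $c$, driven by the change-of-rings long exact sequence that is standard when one kills a single non-zerodivisor. For any local ring $U$, non-zerodivisor $g \in U$, quotient $T = U/(g)$, and $T$-modules $M, N$, the resolution $0 \to U \xrightarrow{g} U \to T \to 0$ together with $gN = 0$ shows that $\Tor^U_q(T,N) \cong N$ for $q = 0, 1$ and vanishes for $q \ge 2$. The resulting two-row Cartan--Eilenberg spectral sequence $E^2_{p,q} = \Tor^T_p(M, \Tor^U_q(T,N)) \Rightarrow \Tor^U_{p+q}(M,N)$ collapses to the long exact sequence
\[
\cdots \to \Tor^T_{n+1}(M,N) \to \Tor^T_{n-1}(M,N) \to \Tor^U_n(M,N) \to \Tor^T_n(M,N) \to \Tor^T_{n-2}(M,N) \to \cdots\,.
\]

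For the base case $c = 1$, with $R = S/(f)$, I apply this sequence with $T = R$, $U = S$ at $n = j+1$: the exact portion $\Tor^R_j(M,N) \to \Tor^S_{j+1}(M,N) \to \Tor^R_{j+1}(M,N)$ has both outer terms zero by hypothesis, so $\Tor^S_{j+1}(M,N) = 0$. Since $j \ge 1$, the $j$-th syzygy $\Omega^j_S M$ sits inside a free $S$-module and is $S$-torsion-free, hence rigid by assumption on $S$. The dimension-shifting identity $\Tor^S_1(\Omega^j_S M, N) \cong \Tor^S_{j+1}(M,N) = 0$ then propagates, via rigidity, to $\Tor^S_n(M,N) = 0$ for all $n \ge j+1$. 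Feeding these vanishings back into the long exact sequence yields injections $\Tor^R_n(M,N) \hookrightarrow \Tor^R_{n-2}(M,N)$ for every $n \ge j+1$, and a straightforward induction on $n$ seeded by the hypothesis gives $\Tor^R_n(M,N) = 0$ for all $n \ge j$.

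For the inductive step $c \ge 2$, write $R = R'/(f_c)$ where $R' = S/(f_1, \ldots, f_{c-1})$ and $f_c$ is a non-zerodivisor on $R'$. Applying the long exact sequence with $T = R$, $U = R'$ at each $n \in \{j+1, \ldots, j+c\}$, the two consecutive $R$-vanishings $\Tor^R_{n-1}(M,N) = \Tor^R_n(M,N) = 0$ force $\Tor^{R'}_n(M,N) = 0$, producing $c$ consecutive $R'$-vanishings. The inductive hypothesis applied to $R'$ with starting index $j+1$ and regular sequence $f_1, \ldots, f_{c-1}$ then gives $\Tor^{R'}_n(M,N) = 0$ for all $n \ge j+1$, which when substituted back into the sequence produces injections $\Tor^R_n(M,N) \hookrightarrow \Tor^R_{n-2}(M,N)$ for $n \ge j+1$. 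A final induction on $n$ from the hypothesis $\Tor^R_j(M,N) = \Tor^R_{j+1}(M,N) = 0$ completes the proof.

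The principal obstacle is establishing the two-row long exact sequence itself, and in particular the correct identification of the $d_2$-differential as the skip-two shift $\Tor^T_n(M,N) \to \Tor^T_{n-2}(M,N)$; once this machinery is in hand, the rest is bookkeeping of consecutive zeros moving back and forth across one level of the tower $S \to R_1 \to \cdots \to R_c = R$. One should also keep in mind the asymmetry that $M$, viewed over $S$, is $S$-torsion (as each $f_i$ annihilates it), but its higher $S$-syzygies are automatically $S$-torsion-free as submodules of free modules, allowing the $S$-rigidity hypothesis to be applied to them.
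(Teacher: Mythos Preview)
Your argument is correct and is essentially the same as the paper's: induction on $c$ driven by the change-of-rings long exact sequence \eqref{eq:les-cor}, pushing consecutive $R$-vanishings down to the intermediate quotient and then bootstrapping back up. The only cosmetic differences are that the paper takes $c=0$ as the base case (so rigidity over $S$ is invoked directly at the bottom of the induction rather than inside a separate $c=1$ argument), and that the paper's inductive step proves the single extra vanishing $T^R_{j+c+1}=0$ and iterates by shifting $j$, whereas you obtain all $T^{R'}_n=0$ at once and then use the skip-two injections; your explicit passage to the torsion-free syzygy $\Omega^j_S M$ is exactly the point needed to match the hypothesis that only \emph{torsion-free} $S$-modules are assumed rigid.
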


Passing (harmlessly) to the completion and combining Theorem~\ref{thm:murthy-c+1} and Corollary~\ref{cor:ram-rigid}, we have

\begin{theorem}\label{thm:c+1-rigid} If $R$ is a complete intersection of
 codimension $c$, then every finitely
 generated $R$-module is $(c+1)$-rigid.
\end{theorem}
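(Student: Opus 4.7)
The plan is to reduce to the completion of $R$ and then combine Murthy's Theorem~\ref{thm:murthy-c+1} with the rigidity of all modules over a regular local ring provided by Corollary~\ref{cor:ram-rigid}.

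First, I would pass harmlessly to $\widehat R$. Because $R \to \widehat R$ is faithfully flat and $M, N$ are finitely generated, $\Tor_i^{\widehat R}(\widehat M, \widehat N) \cong \Tor_i^R(M,N) \otimes_R \widehat R$ for every $i$, and this vanishes if and only if $\Tor_i^R(M,N)$ does. Hence $(c+1)$-rigidity for pairs of finitely generated $R$-modules is equivalent to $(c+1)$-rigidity for their completions, and we may replace $R$ by $\widehat R$. By the definition of a complete intersection of codimension $c$, we then have $R = S/(\underline f)$, where $(S,\fn)$ is a complete regular local ring and $\underline f = f_1, \ldots, f_c$ is a regular sequence in $\fn$.

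Next, I would verify the hypothesis of Theorem~\ref{thm:murthy-c+1}. A complete regular local ring is a domain, and by Corollary~\ref{cor:ram-rigid} every finitely generated $S$-module---hence, in particular, every torsion-free $S$-module---is rigid. So Murthy's theorem applies to the surjection $S \to R$.

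Finally, I would unwind the definition of $(c+1)$-rigidity and match indices with Murthy's theorem. If $\Tor_i^R(M,N) = 0$ for $i = j+1, \ldots, j+c+1$ with some $j \geq 0$, set $j' := j+1 \geq 1$; the hypothesis becomes vanishing of $\Tor_i^R(M,N)$ for $i = j', j'+1, \ldots, j'+c$, and Theorem~\ref{thm:murthy-c+1} then yields vanishing for all $i \geq j' = j+1$, i.e., for all $i > j$, as required. There is no real obstacle here: the theorem is a short formal synthesis of two previously quoted results, with the only delicate point being the index shift $j \mapsto j+1$ needed to line up our bookkeeping with Murthy's.
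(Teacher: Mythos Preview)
Your proof is correct and matches the paper's own derivation: the paper introduces Theorem~\ref{thm:c+1-rigid} with the sentence ``Passing (harmlessly) to the completion and combining Theorem~\ref{thm:murthy-c+1} and Corollary~\ref{cor:ram-rigid}, we have\ldots,'' which is exactly what you do, including the index shift. The paper does also spell out an inductive argument via the change-of-rings long exact sequence~\eqref{eq:les-cor}, but that is effectively a reproof of Murthy's theorem in this setting, not a different approach.
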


 This can fail for Gorenstein rings that are not complete intersections \cite[Example 3.3]{Jo3}:

\begin{example}[Jorgensen, 2001]\label{eg:codim-3} Let $R=S/I$ where $S$ is the polynomial ring $\CC[\![x_1, x_2, x_3, x_4 , x_5, x_6, x_7, x_8, x_9, x_{10}]\!]$ and $I$ is the ideal of $S$ generated by the five polynomials $f_1=x_3x_5-x_2x_6+x_1x_8$, $f_2=x_4x_5-x_2x_7+x_1x_9$, $f_3=x_4x_6-x_3x_7+x_1x_{10}$, $f_4=x_4x_8-x_3x_9+x_2x_{10}$, and $f_5=x_7x_8-x_6x_9+x_5x_{10}$. Then $R$ is a Gorenstein ring (not a complete intersection) of codimension three. Furthermore, setting $M=R/(x_1, x_2, x_4)$ and $N=R/(x_6, x_8, x_{10})$, we have
$$
\Tor^R_1(M,N)=\Tor^R_2(M,N)=\Tor^R_3(M,N)=\Tor^R_4(M,N)=0,
$$
but $\Tor^R_5(M,N)\ne 0$.
\end{example}

To prove Theorem~\ref{thm:murthy-c+1}, Murthy built  a long exact sequence associated with a
change of rings $S\to R$, where $(S,\fn)$ is a local ring,
$R=S/(f)$, and $f$ is a non-zerodivisor in $\fn$. For finitely generated $R$-modules
$M$ and $N$, we denote $\Tor_i^A(M,N)$ by $T^A_i$, where $A$ is
either $R$ or $S$.  Here  is the long exact sequence:

\begin{equation}\label{eq:les-cor}
\begin{matrix}
\vdots & & \vdots & & \vdots & & \\
T^R_j & \to & T^S_{j+1} & \to & T^R_{j+1} & \to \\
&&&&&&&&\\
T^R_{j-1} & \to & T^S_j & \to & T^R_j & \to \\
\vdots & & \vdots & & \vdots & & \\
T^R_1 & \to & T^S_2 & \to & T_2^R &\to \\
&&&&&&&&\\
T_0^R & \to & T_1^S & \to & T_1^R & \to & 0 \,.
\end{matrix}
\end{equation}

\medskip

The proof of Theorem~\ref{thm:c+1-rigid} now goes by induction on the
codimension.  The base case $c=0$ is  Corollary~\ref{cor:ram-rigid}.
Assuming $c\ge 1$, we pass to the completion and assume that $R =
S/(f)$, where $(S,\fn)$ is a complete intersection of codimension
$c-1$ and $f$ is a non-zerodivisor in $\fn$.  Assuming that
$T_i^R=0$ for $i = j,\dots, j+c$, we see from \eqref{eq:les-cor} that
$T^S_i=0$ for $i = j+1,\dots,j+c$.  By the induction hypothesis, we
have $T^S_{j+c+1}=0$.  This, together with the vanishing of
$T^R_{j+c-1}$, forces $T^R_{j+c+1} = 0$. \qed

\medskip

Lichtenbaum \cite{Li} used the same long exact sequence, obtaining it via a spectral sequence argument.  (Cf. \cite[Lemma 2.1]{HW1}.)

\medskip

Until the 90's, there were no known examples of complete intersections without $2$-rigidity.
But in fact Murthy's result is sharp, as shown in \cite[Example 4.1]{Jo1}:

\begin{example}[Avramov and Jorgensen, 1999]\label{eg:Av-Jo} Let $k$ be any field, and fix a positive integer $c$.  The ring  $R :=
k[\![x_1,\dots,x_c, y_1,\dots,y_c]\!]/(x_1y_1,\dots,x_cy_c)$  is a complete intersection
with $\dim R = \codim R = c$.  Put $N = R/(y_1,\dots,y_c)$.  Given any
non-negative integer $s$, there is a finitely generated $R$-module $M_s$ such that
$\Tor_i^R(M_s,N) = 0$ for $i = s+1,\dots , s+c$, but $\Tor^R_{s+c+1}(M_s,N) \cong k$.
\end{example}

On the other hand, Jorgensen  proved sort of an  ``asymptotic'' $2$-rigidity theorem (\cite[Theorem 3.1]{Jo1}), showing that the vanishing of two consecutive $\Tor$s \emph{sufficiently far out} forces the vanishing of all subsequent $\Tor$s:

\begin{theorem}[Jorgensen, 1999]\label{thm:asymptotic2-rigid} Let $(R,\fm)$ be a $d$-dimensional complete intersection, let $M$ and $N$ be finitely generated $R$-modules such that $M\otimes_RN$ has finite length.   There is an integer $H$ with the following property:  If
$\Tor_i^R(M,N) = 0$ for some even $i > H$ and also for some odd $i > H$, then  $\Tor_n^R(M,N) = 0$ for all
$n> d-\max\{\depth M, \depth N\}$.
\end{theorem}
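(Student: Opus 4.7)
The plan is to lift the hypothesis of isolated even/odd Tor vanishings to full asymptotic vanishing via the cohomology operators on a complete intersection, and then convert that asymptotic vanishing to the sharp range via Murthy's sequence \eqref{eq:les-cor} and the Auslander-Buchsbaum formula over a regular ambient ring. First, pass to the completion and write $\widehat R = Q/(\underline f)$ with $Q$ regular local of dimension $d+c$ and $\underline f = f_1, \dots, f_c$ a $Q$-regular sequence of length $c$; the finite length of $M \otimes_R N$ forces each $\Tor_i^R(M,N)$ to have finite length, making length functions available throughout.

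The key structural input is Gulliksen's theorem: $\Ext_R^{\ast}(M,N)$ is a finitely generated graded module over the polynomial ring of cohomology operators $R[\chi_1, \dots, \chi_c]$ with $|\chi_j| = 2$, and by Matlis duality on the finite-length Tor modules, $\Tor_{\ast}^R(M,N)$ inherits an analogous structure. Splitting by parity and reducing modulo $\fm$, the Hilbert functions of the even and odd parts ultimately agree with polynomials $P_e(i), P_o(i)$ of common degree $\cx_R(M,N) - 1$, and $H$ is chosen larger than the point beyond which this agreement begins plus the generation degrees of these graded modules. The heart of the proof is to show that vanishing at one even $i_0 > H$ and one odd $i_1 > H$ forces $P_e$ and $P_o$ to vanish identically --- equivalently, that the support variety $V_R(M,N) \subseteq \mathbb{A}^c_k$ collapses to the origin. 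Avramov's support variety machinery handles this: the leading coefficients of $P_e$ and $P_o$ coincide and are positive whenever $\cx_R(M,N) > 0$, so a generic linear form on the support variety precludes isolated zeros of the length polynomials in the asymptotic range. This is the main obstacle, and the step where the novelty of the theorem lies.

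With $\cx_R(M,N) = 0$ in hand, iterate Murthy's long exact sequence \eqref{eq:les-cor} through $f_1, \dots, f_c$ to transfer eventual vanishing of $\Tor^R(M,N)$ upward to $\Tor^Q(M,N)$. Since $Q$ is regular of dimension $d+c$, the Auslander-Buchsbaum formula pins down $\Tor_i^Q(M,N) = 0$ for $i > (d+c) - \max\{\depth M, \depth N\}$. Running \eqref{eq:les-cor} back down through the sequence of rings, together with the $(c+1)$-rigidity of Theorem~\ref{thm:c+1-rigid} to push vanishing downward across each codimension step, yields $\Tor_n^R(M,N) = 0$ for all $n > d - \max\{\depth M, \depth N\}$, as claimed.
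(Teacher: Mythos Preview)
The survey does not give a proof of this theorem; it simply records Jorgensen's result with a citation to \cite[Theorem~3.1]{Jo1}.  Your outline does follow the shape of Jorgensen's original argument: use the Eisenbud--Gulliksen operators to see that $i\mapsto\len(\Tor_i^R(M,N))$ is eventually a quasi-polynomial of period two, so that a single vanishing in each parity beyond a suitable $H$ kills both polynomials and hence all high $\Tor$; then descend through the change-of-rings sequences to reach the stated range.

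Two steps, however, are not correctly justified.  First, the Matlis-duality bridge from $\Ext^*_R(M,N)$ to $\Tor_*^R(M,N)$ does not work as written: the isomorphism $\Tor_i^R(M,N)^\vee\cong\Ext^i_R(M,N^\vee)$ lands in $\Ext$ against $N^\vee$, which is not finitely generated unless $N$ has finite length, so Gulliksen's finiteness for $\Ext$ does not transfer.  The fix is to invoke Gulliksen's theorem \emph{directly} for $\Tor$ (equivalently, rationality of $\sum_i\len(\Tor_i^R(M,N))\,t^i$ with denominator dividing $(1-t^2)^c$); the support-variety and ``equal leading coefficients'' remarks are then unnecessary, since a nonzero Hilbert polynomial of a non-negative function is eventually positive, and $H$ is simply chosen past its last zero.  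Second, the appeal to $(c{+}1)$-rigidity in the final paragraph is misplaced: rigidity pushes vanishing \emph{upward} in the index, whereas you need to push it \emph{downward} from $i\gg0$ to all $n>d-b$ where $b=\max\{\depth M,\depth N\}$.  The right mechanism is a descending induction on $n$ using the segment $T^R_{n+2}\to T^R_n\to T^S_{n+1}$ of \eqref{eq:les-cor}, with $S$ of codimension $c-1$ and $\dim S=d+1$: the codimension induction gives $T^S_{n+1}=0$ for $n+1>(d+1)-b$, i.e.\ $n>d-b$, and then $T^R_{n+2}=0$ forces $T^R_n=0$.
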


The integer $H$ in the theorem depends on the modules $M$ and $N$.  One can ask whether there is such an integer $H$ depending only on the ring $R$.  Example~\ref{eg:Av-Jo} does not rule this out, since $M_s\otimes_RN$ has infinite length.  Later, in Sections \ref{sec:more-rigidity} and \ref{sec:eta}, we will see that one can sometimes improve $c+1$-rigidity to $c$-rigidity, for modules $M$ and $N$ over a complete intersection $R$ of codimension $c$, provided $M\otimes_RN$ has finite length.

\medskip

Here is a rigidity theorem, a special case of  \cite[Theorem 3.2]{Bergh08}, where the degrees of the hypothesized vanishing Tors need not be consecutive.

\begin{theorem}[Bergh, 2008]\label{thm:Bergh-Rigid} Let $(R,\fm)$ be a complete intersection of dimension $d$ and codimension $c$, and let $M$ and $N$ be finitely generated $R$-modules.  Suppose there exist an integer $n > p:=d-\depth M$ and an odd positive integer $q$ such that $\Tor ^R_i(M,N) = 0$ for $i = n,n+q,\dots,n+cq$.  Then $\Tor^R_i(M,N) = 0$ for all $i > p$.
\end{theorem}

\section{Modules of finite projective dimension}
Let $R$ be an arbitrary local ring.  In \cite[Theorem 4.3]{Au}, Auslander proved that if $x$ is a zerodivisor in $R$, then $x$ is a zerodivisor on every nonzero rigid $R$-module.  He raised, implicitly, the question of whether every zerodivisor of $R$ must be a zerodivisor on every finitely generated nonzero $R$-module of finite projective dimension.  This became known as Auslander's ``Zerodivisor Conjecture'', and the related question, whether every finitely generated module of finite projective dimension is rigid, became the ``Rigidity Conjecture''.   Thus the Rigidity Conjecture implies the Zerodivisor Conjecture (cf. \cite[page 8]{Ho-Neb}).  In fact, the Zerodivisor Conjecture is also a consequence of the Intersection Conjecture of Peskine and Szpiro \cite[Chapter II, Theorem 2.1]{PS1974}: If $M\ne 0$ and $M\otimes_RN$ has finite length, then $\dim N \le \pd_RM$.  Peskine and Szpiro observed that the Intersection Conjecture would follow from the Rigidity Conjecture. They proved the Intersection Conjecture  in characteristic $p>0$ and for local rings essentially of finite type over a field of characteristic $0$.   In 1987 Roberts \cite{Ro1987} proved it in general.  Thus the Zerodivisor Conjecture is now a theorem.

The Rigidity Conjecture did not fare so well.  In 1993 Heitmann \cite{He-Rigid} constructed a ring $R$ and an $R$-module $M$ with constant rank $2$ and with projective dimension $2$ such that $M$ is not rigid.  This example is minimal in the sense that every module of projective dimension one is rigid (trivially), as is every \emph{torsion} module of projective dimension two \cite[Chap. II, Proposition 1.4]{PS1974}:

\begin{theorem}[Peskine and Szpiro, 1974]\label{thm:pd2-rigid} Let $(R,\fm)$ be a local ring, and let $M$ be a finitely generated torsion $R$-module with projective dimension at most $2$.  Then $M$ is rigid.
\end{theorem}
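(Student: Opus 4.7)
The rigidity assertion is that for any finitely generated $R$-module $N$ and any $j\ge 1$, vanishing of $\Tor_j^R(M,N)$ forces $\Tor_i^R(M,N)=0$ for all $i\ge j$. Since $\pd_R M\le 2$ makes $\Tor_i^R(M,-)=0$ automatically for $i\ge 3$, and the case $\pd_R M\le 1$ is immediate, the only instance that requires work is: when $\pd_R M=2$, does $\Tor_1^R(M,N)=0$ force $\Tor_2^R(M,N)=0$? That is what I plan to target.

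Because $M$ is finitely generated and torsion, $\ann_R(M)$ is not contained in any associated prime of $R$, so prime avoidance yields a non-zerodivisor $x\in\ann_R(M)$. Then $M$ is a finitely generated $R/xR$-module, and because $x$ is $R$-regular, the standard change-of-rings identity gives $\pd_{R/xR}(M)=\pd_R(M)-1=1$. My plan is to transfer the $\Tor^R$-computation to $R/xR$ via the Grothendieck composite-functor spectral sequence
\[
E^2_{p,q}=\Tor_p^{R/xR}\!\bigl(M,\,\Tor_q^R(R/xR,N)\bigr)\;\Longrightarrow\;\Tor_{p+q}^R(M,N),
\]
which is legitimate since $(-)\otimes_R N$ factors as $(-)\otimes_{R/xR}(R/xR\otimes_R N)$ and free $R$-modules become flat $R/xR$-modules.

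The Koszul resolution $0\to R\xrightarrow{x}R\to R/xR\to 0$ computes $\Tor_0^R(R/xR,N)=N/xN$, $\Tor_1^R(R/xR,N)={}_xN:=\{n\in N:xn=0\}$, and $\Tor_q^R(R/xR,N)=0$ for $q\ge 2$. Combined with $\pd_{R/xR}(M)\le 1$ killing the columns $p\ge 2$, only the $2\times 2$ block $p,q\in\{0,1\}$ of $E^2$ is nonzero, and each $d_2$ differential has source or target zero. So the spectral sequence degenerates at $E^2$, giving in total degree $2$ the identification $\Tor_2^R(M,N)\cong\Tor_1^{R/xR}(M,{}_xN)$, and in total degree $1$ a short exact sequence whose outer terms are $M\otimes_{R/xR}{}_xN$ and $\Tor_1^{R/xR}(M,N/xN)$.

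The finish is a support argument. The hypothesis $\Tor_1^R(M,N)=0$ forces in particular $M\otimes_{R/xR}{}_xN=0$. Since $M$ and ${}_xN$ are finitely generated over the Noetherian ring $R/xR$, Nakayama applied prime-by-prime yields $\Supp_{R/xR}(M)\cap\Supp_{R/xR}({}_xN)=\emptyset$; at every prime $\fq$ of $R/xR$, at least one of $M_\fq$ or $({}_xN)_\fq$ vanishes. Localizing then kills $\Tor_i^{R/xR}(M,{}_xN)$ for every $i\ge 0$, so $\Tor_2^R(M,N)\cong\Tor_1^{R/xR}(M,{}_xN)=0$, completing the rigidity check. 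The delicate point of the plan is the spectral-sequence bookkeeping and its degeneration, but the $2\times 2$ shape makes this essentially mechanical; the real leverage is the reduction $\pd_R M=2\text{ (torsion)}\Rightarrow\pd_{R/xR}M\le 1$, followed by the support trick that trivialises all higher Tor at once.
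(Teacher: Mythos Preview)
Your argument has a genuine gap at the change-of-rings step. The claimed identity $\pd_{R/xR}(M)=\pd_R(M)-1$ runs the First Change of Rings Theorem in the wrong direction: if $x$ is $R$-regular and $xM=0$, one has $\pd_R(M)=\pd_{R/xR}(M)+1$ \emph{provided} $\pd_{R/xR}(M)<\infty$, but finiteness of $\pd_R(M)$ does \emph{not} force finiteness of $\pd_{R/xR}(M)$. For a concrete failure, take $R=k[[s,t]]$ and $M=R/\fm^2$; then $\pd_R(M)=2$ and $\ann_R(M)=\fm^2$. Every non-zerodivisor $x\in\ann_R(M)$ lies in $\fm^2$, so $\overline R:=R/(x)$ is a one-dimensional local ring of embedding dimension two, hence non-regular. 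The first $\overline R$-syzygy of $M$ is the ideal $\fm^2/(x)\subset\overline R$, which needs two generators; since a free ideal must be principal, this syzygy is not free, and $\pd_{\overline R}(M)=\infty$. Thus no choice of $x$ rescues the reduction.

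Once $\pd_{\overline R}(M)=\infty$, the columns $p\ge 2$ of your spectral sequence survive and the ``$2\times 2$ degeneration'' fails. The two-row long exact sequence at $n=1$ becomes
\[
\Tor_2^{\overline R}(M,N/xN)\xrightarrow{d_2} M\otimes_{\overline R}{}_xN\to\Tor_1^R(M,N)\to\Tor_1^{\overline R}(M,N/xN)\to 0,
\]
so $\Tor_1^R(M,N)=0$ only yields surjectivity of $d_2$, not $M\otimes_{\overline R}{}_xN=0$; the support trick never gets started. The paper's proof takes an entirely different route: induction on $\dim R$ reduces to the case where $\Tor_2^R(M,N)$ has finite length; one then splits off $L=\mathrm{H}^0_\fm(N)$, uses the resolution of $M$ to embed $\Tor_2^R(M,N/L)$ into a module of positive depth (forcing it to vanish), and finally invokes the Euler-characteristic identity $\chi^R(M,L)=0$ for torsion $M$ of finite projective dimension to conclude $L=0$.
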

\noindent The proof is kind of fun, and we'll give it below, after a brief discussion of Euler characteristics.

Heitmann's example is not a complete intersection, and the companion module $N$ that demonstrates failure of rigidity has infinite projective dimension.

\begin{question}\label{q:CI-both-fpd} Let $R$ be a local ring and $M$ and $N$ finitely generated $R$-modules
with $\pd_RM<\infty$.
\begin{enumerate}[(i)]
\item If $R$ is a complete intersection, is the pair $(M,N)$ rigid?
\item If both $M$ and $N$ have finite projective dimension, is $(M,N)$ rigid?
\end{enumerate}
\end{question}

\noindent Bergh \cite{Bergh} observed that an answer to (ii)  implies an affirmative answer to (i):

\begin{proposition} \label{prop:Bergh} Let $R$ be a complete intersection.
Then the following conditions are equivalent:
\begin{enumerate}[(i)]
\item If $M$ is a finitely generated $R$-module of finite projective dimension, then $M$ is rigid.
\item If $M$ and $N$ are finitely generated $R$-modules, both with finite projective dimension, then the pair $(M, N)$ is rigid.
\item If $M$ and $N$ are finitely generated $R$-modules such that $\Tor^{R}_{i}(M,N)=0$ for all $i\gg 0$, then the pair $(M, N)$ is rigid.
\end{enumerate}
\end{proposition}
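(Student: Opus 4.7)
\medskip

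\noindent\textbf{Proof plan.} The plan is to close the cycle (iii)~$\Rightarrow$~(i)~$\Rightarrow$~(ii)~$\Rightarrow$~(iii), observing that the first two implications are formal and that only (ii)~$\Rightarrow$~(iii) carries real content.

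For (iii)~$\Rightarrow$~(i): if $\pd_R M < \infty$, then $\Tor_i^R(M,N) = 0$ for every $i > \pd_R M$ and every finitely generated $R$-module $N$, so the hypothesis of (iii) is satisfied automatically and yields that $(M,N)$ is rigid; as $N$ was arbitrary, $M$ itself is rigid. For (i)~$\Rightarrow$~(ii): when both $M$ and $N$ have finite projective dimension, the rigidity of $M$ granted by (i) already implies rigidity of the pair $(M,N)$.

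The substantive direction is (ii)~$\Rightarrow$~(iii). My strategy is to reduce the hypothesis of merely eventual $\Tor$-vanishing to the situation of finite projective dimension on \emph{both} sides, so that (ii) can be invoked. Passing to completion, one may write $R \cong S/(\underline f)$ with $S$ a complete regular local ring and $\underline f = f_1,\dots,f_c$ a regular sequence in its maximal ideal. The hypothesis $\Tor_i^R(M,N) = 0$ for $i \gg 0$ forces, via the cohomology-operator machinery of Avramov--Buchweitz, the support varieties of $M$ and $N$ (cones in $k^c$) to intersect only at the origin, and in particular $\cx_R(M) + \cx_R(N) \le c$. After a suitable linear change of the $f_i$, one can then extract a partial regular sequence $g_1,\dots,g_r \in (\underline f)$ such that, over the intermediate complete intersection $R' := S/(g_1,\dots,g_r)$, sufficiently high syzygies of both $M$ and $N$ acquire \emph{finite} projective dimension.

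With such a deformation in place, hypothesis (ii) applied to the complete intersection $R'$ yields rigidity of the pair (built from those high syzygies) as $R'$-modules. The final step is to descend this rigidity to $R$. I would do this by iterating the change-of-rings long exact sequence~\eqref{eq:les-cor} once for each element of $\underline f$ not already in $\underline g$: at each stage the sequence interlocks $\Tor^R$ with $\Tor$ over the next quotient, and combining this with the $(c+1)$-rigidity of Theorem~\ref{thm:c+1-rigid} lets one propagate isolated vanishings into full tails. This descent step --- showing that a single vanishing $\Tor_j^R(M,N) = 0$ yields $\Tor_i^R(M,N) = 0$ for all $i \ge j$ --- is the main obstacle, and requires careful bookkeeping through \eqref{eq:les-cor} in tandem with the finite-pd information lifted up from $R'$.
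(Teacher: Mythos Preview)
Your formal implications (iii)~$\Rightarrow$~(i)~$\Rightarrow$~(ii) are correct; the problem lies in your argument for (ii)~$\Rightarrow$~(iii).

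The central gap is a logical one: hypothesis~(ii) is a statement about the fixed ring $R$, but you invoke it over the intermediate ring $R'$. The proposition asserts an equivalence of conditions for each complete intersection \emph{separately}; it does not assert that any one of them holds for all complete intersections. So the sentence ``hypothesis~(ii) applied to the complete intersection $R'$ yields rigidity of the pair'' assumes something you have not been given. Even granting that step, the descent you describe is only a sketch: the sequence~\eqref{eq:les-cor} links $\Tor^R_j$ to $\Tor^R_{j\pm 1}$ and to $\Tor$ over the next ring up, and without already controlling the latter in low degrees it is not clear how a \emph{single} vanishing $\Tor_j^R(M,N)=0$ can be bootstrapped into a full tail. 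You yourself flag this as ``the main obstacle'', and it is not resolved.

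The paper avoids both issues by never leaving $R$. It simply quotes Bergh's theorem (Theorem~\ref{thm:Bergh}): whenever $\Tor_i^R(M,N)=0$ for $i\gg 0$, there exist finitely generated $R$-modules $M'$ and $N'$, both of \emph{finite projective dimension over $R$}, with $\Tor_i^R(M',N')\cong\Tor_i^R(M,N)$ for all $i\ge 1$. Hypothesis~(ii) then applies directly to the pair $(M',N')$, and since the Tor modules coincide for all $i\ge 1$, rigidity of $(M',N')$ is literally rigidity of $(M,N)$. The key move you are missing is to modify the \emph{modules} (via Bergh's reducible-complexity short exact sequences built from the cohomology operators) rather than the ring; this keeps hypothesis~(ii) available and makes the descent step disappear entirely.
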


\noindent This is an immediate consequence of the following theorem  \cite[Theorem 3.6]{Bergh}:

\begin{theorem}[Bergh, 2007]\label{thm:Bergh} Let $R$ be a  complete intersection, and let $M$ and $N$ be finitely generated $R$-modules such that $\Tor^{R}_{i}(M,N)=0$ for all $i\gg 0$. There exist finitely generated $R$-modules $M'$ and $N'$ such that
\begin{enumerate}[(i)]
\item both $M'$ and $N'$ have finite projective dimension;
\item $\depth M' = \depth M$, and $\depth  N' = \depth N$; and
\item $\Tor^{R}_{i}(M',N')\cong \Tor^{R}_{i}(M,N)$ for all $i\geq 1$.
\end{enumerate}
\end{theorem}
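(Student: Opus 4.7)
The strategy is to induct on the sum of the complexities $\cx_R(M) + \cx_R(N)$, replacing one factor at each step by a module of one lower complexity that has the same depth and the same Tor modules against the other factor. The base case of both complexities zero is exactly the statement that $M$ and $N$ already have finite projective dimension, so we may take $M'=M$ and $N'=N$. Passing to the $\fm$-adic completion (a faithful flat extension under which Tor, depth, projective dimension, and complexity are all preserved), we may assume $R = S/(f_1,\ldots,f_c)$ with $(S,\fn)$ a regular local ring and $f_1,\ldots,f_c$ a regular sequence in $\fn^2$.

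In this setting the Eisenbud cohomology operators $\chi_1,\ldots,\chi_c$ make $\Tor^R_*(M,N)$ a finitely generated graded module over $k[\chi_1,\ldots,\chi_c]$, and by the theory of support varieties (Avramov--Buchweitz) the hypothesis $\Tor^R_i(M,N)=0$ for $i\gg 0$ is equivalent to the geometric condition $V_R(M)\cap V_R(N) = \{0\}$ in $k^c$. Assuming $\cx_R(M)\geq 1$, the variety $V_R(M)$ is a positive-dimensional cone meeting $V_R(N)$ only at the origin, so we may choose a linear form $\chi\in k[\chi_1,\ldots,\chi_c]$ vanishing on $V_R(N)$ but not on $V_R(M)$. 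Viewing $\chi$ as an element of $\Ext^2_R(M,M)$, we build from it a short exact sequence
$$
0 \longrightarrow M \longrightarrow \widetilde M \longrightarrow \Omega^1_R M \longrightarrow 0
$$
representing the Yoneda class of $\chi$; the support variety of $\widetilde M$ is the hyperplane section $V_R(M)\cap\{\chi=0\}$, so $\cx_R(\widetilde M) = \cx_R(M)-1$. Because $\chi$ acts trivially on $V_R(N)$, multiplication by $\chi$ on the graded $k[\chi_1,\ldots,\chi_c]$-module $\Tor^R_*(M,N)$ is nilpotent, and after replacing $M$ by a sufficiently high syzygy $\Omega^n_R M$ (so that the only nonzero Tor modules lie in a finite range) this action becomes identically zero. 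The connecting maps in the long exact sequence of $\Tor^R_*(-,N)$ attached to the displayed short exact sequence then vanish, and a bookkeeping argument identifies $\Tor^R_i(\widetilde M,N)\cong\Tor^R_i(M,N)$ for all $i\geq 1$. Depth equality is handled by the depth lemma, combined with the fact that high syzygies over $R$ have depth $\depth R$, so free-summand and syzygy adjustments can align $\depth\widetilde M$ with $\depth M$.

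The main obstacle is arranging that the reduction step produces genuine Tor-isomorphisms rather than mere extensions: the connecting maps in the long exact sequence encode multiplication by $\chi$ on $\Tor^R_*(M,N)$, so their vanishing is the key. This is where the Eisenbud-operator machinery must be used with full force, replacing $M$ by enough syzygies that $\chi$ acts as zero on the whole graded module $\Tor^R_*(M,N)$, and then folding the resulting short exact extensions back into genuine isomorphisms by adding controlled free summands. Iterating the reduction, alternating between $M$ and $N$, terminates after at most $\cx_R(M)+\cx_R(N)\leq c$ steps, at which point both complexities vanish and we obtain the required finite-projective-dimension modules $M'$ and $N'$ satisfying (i)--(iii).
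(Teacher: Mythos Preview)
The paper does not prove this theorem; it is quoted from Bergh \cite{Bergh} and invoked only to deduce Proposition~\ref{prop:Bergh}, so there is no in-paper argument to compare against.  Your outline does follow Bergh's reducible-complexity strategy in spirit, but the central reduction step has a genuine gap.

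The problem is the passage from the short exact sequence $0 \to M \to \widetilde M \to \Omega_R^1 M \to 0$ (built from a degree-two operator $\chi$) to the claimed isomorphisms $\Tor_i^R(\widetilde M, N) \cong \Tor_i^R(M,N)$.  Even if every connecting map vanishes, the long exact sequence only yields short exact sequences
\[
0 \longrightarrow \Tor_i^R(M,N) \longrightarrow \Tor_i^R(\widetilde M, N) \longrightarrow \Tor_{i+1}^R(M,N) \longrightarrow 0,
\]
and these are not isomorphisms unless $\Tor_{i+1}^R(M,N)$ already vanishes.  Your proposed remedy---replacing $M$ by a high syzygy $\Omega^n_R M$---makes matters worse rather than better: it shifts the Tor modules up by $n$, so for $n$ large every $\Tor_i^R(\Omega^n_R M,N)$ with $i\ge 1$ is zero, and the low-degree Tor information you are obliged to preserve has simply been discarded.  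Adding free summands cannot recover it, since free summands leave $\Tor_i$ unchanged for $i\ge 1$.  The correct maneuver (and this is what Bergh actually does) is to raise the \emph{degree of the operator}, not the syzygy: replace $\chi$ by a power $\chi^m\in\Ext^{2m}_R(M,M)$.  This still reduces complexity by one and still preserves depth, but now the sequence reads $0\to M\to K\to\Omega^{2m-1}_R M\to 0$; once $2m-1$ exceeds the largest $q$ with $\Tor_q^R(M,N)\ne 0$, one has $\Tor_i^R(\Omega^{2m-1}_R M,N)\cong\Tor^R_{i+2m-1}(M,N)=0$ for every $i\ge 1$, and the long exact sequence delivers $\Tor_i^R(K,N)\cong\Tor_i^R(M,N)$ for all $i\ge 1$ immediately---no vanishing of connecting maps and no bookkeeping required.
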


\subsection*{The Euler characteristic}  Let $(S,\fn,k)$ be a $d$-dimensional regular local ring, and let $M$ and $N$ be finitely generated $S$-modules such that $M\otimes_SN$ has finite length. One defines the \emph{Euler characteristic} by the formula
\begin{equation}\notag{}
\chi^S(M,N) = \sum_{i=0}^d(-1)^i\len(T_i^S)\,,
\end{equation}
where $\len$ denotes length and $T_i^S=\Tor_i^S(M,N)$.  More generally, there are the \emph{higher} Euler characteristics
\begin{equation}\notag{}
\chi_j^S(M,N) = \sum_{i = j}^d(-1)^{i-j}\len(T_i^S)\,.
\end{equation}
\begin{theorem}\label{thm:chi-quartz} Let $M$ and $N$ be finitely generated modules over a regular local ring $S$.  Assume that $M\otimes_SN$ has finite length.
\begin{enumerate}[(i)]
\item\label{item:chi-van} If $\dim M + \dim N < \dim S$, then $\chi^S(M,N) = 0$.
\item\label{item:unram-chi-van}  Assume $S$ is unramified.  For $j\ge 1$, one has  $\chi^S_j \ge 0$,
with equality if and only if $\Tor_j^S(M,N) = 0$.
\end{enumerate}
\end{theorem}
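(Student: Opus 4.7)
The plan is to prove both statements via Serre's reduction-to-the-diagonal technique. We pass to the completion (which preserves the hypotheses and all invariants involved) and invoke Cohen's structure theorem together with the unramified hypothesis to write either $S = k[\![x_1,\dots,x_d]\!]$ in the equi-characteristic case, or $S = V[\![x_1,\dots,x_{d-1}]\!]$ in the mixed-characteristic case, where $V$ is a complete DVR whose uniformizer $p$ satisfies $p \notin \fn_V^2$.

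We treat the equi-characteristic case first. Set $T := S \ctensor_k S$, again a regular local ring, and let $\mathbf{f} = (x_1 - y_1, \dots, x_d - y_d)$ generate the kernel of the multiplication map $T \onto S$. Since $\mathbf{f}$ is a regular sequence, the Koszul complex $K(\mathbf{f})$ is a $T$-free resolution of $S$, and flatness of $k$ yields the identification
\[
\Tor_i^S(M, N) \;\iso\; \Tor_i^T\bigl(S,\, M \ctensor_k N\bigr) \;\iso\; H_i\bigl(K(\mathbf{f});\, M \ctensor_k N\bigr).
\]
Thus each $\chi_j^S(M, N)$ becomes a truncated Koszul Euler characteristic of the $T$-module $P := M \ctensor_k N$. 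For part (i), the total Koszul Euler characteristic of a length-$d$ regular sequence acting on a $T$-module of dimension $< d$ vanishes (Serre's multiplicity-zero criterion); combined with the dimension formula $\dim_T P = \dim_S M + \dim_S N$, the hypothesis $\dim M + \dim N < d$ gives $\chi^S(M, N) = 0$. For part (ii), we first reduce to the case $j = 1$ via syzygies: from a short exact sequence $0 \to N' \to F \to N \to 0$ with $F$ free, the long exact sequence of Tor gives $\Tor_i^S(M, N) \iso \Tor_{i-1}^S(M, N')$ for $i \ge 2$, hence $\chi_j^S(M, N) = \chi_1^S(M, \Omega^{j-1} N)$ for all $j \ge 1$, and each $\Tor_i^S(M, \Omega^{j-1}N)$ with $i \ge 1$ still has finite length. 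It therefore suffices to prove $\chi_1^S(M, L) \ge 0$ with equality iff $\Tor_1^S(M, L) = 0$, for every finitely generated $L$ such that $\Tor_i^S(M, L)$ has finite length for all $i \ge 1$. Reduction to the diagonal once more expresses $\chi_1^S(M, L)$ as a Koszul Euler characteristic; a filtration of $K(\mathbf{f})$ yields a spectral sequence whose $E_\infty$-page contributes non-negatively to $\chi_1$, with total vanishing equivalent to $\Tor_1^S(M, L) = 0$.

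The main obstacle is the unramified mixed-characteristic case, which is Lichtenbaum's extension \cite{Li}. There the clean identification $\Tor_i^S(M, N) \iso \Tor_i^T(S, M \ctensor_V N)$ fails, because $V$ is not a field and $\Tor_1^V(M, N)$ need not vanish. Lichtenbaum's workaround is to pass through the hypersurface $R := S/(p)$, which---thanks to the unramified hypothesis $p \notin \fn_V^2$---is itself an equi-characteristic regular local ring of dimension $d - 1$. Applying the equi-characteristic case to $R$ and combining with the change-of-rings long exact sequence \eqref{eq:les-cor} associated to $S \onto R$ transfers both vanishing and non-negativity from the equi-characteristic setting to the unramified mixed-characteristic setting.
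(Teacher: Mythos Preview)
The paper gives no proof of this theorem; it is a survey, and immediately after the statement it attributes part~(i) to Roberts and Gillet--Soul\'e, the inequality in~(ii) to Lichtenbaum, and the equality criterion in~(ii) to Lichtenbaum for $j\ge2$ and to Hochster for $j=1$. So there is no in-paper argument to compare your proposal against, only the cited literature.

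Your high-level plan follows Serre and Lichtenbaum. In the equi-characteristic case, reduction to the diagonal identifies $\Tor_i^S(M,N)$ with Koszul homology, and the relevant facts about partial Euler characteristics of Koszul complexes (non-negativity of $\chi_j$, and that $\chi_j=0$ forces $H_i=0$ for all $i\ge j$) are classical. So that portion is fine in outline, though your one-sentence ``a filtration \dots spectral sequence'' is a pointer rather than an argument.

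The genuine gap is in the unramified mixed-characteristic case. You correctly identify the obstruction (no clean diagonal reduction over $V$) and propose Lichtenbaum's workaround via $R=S/(p)$ and the change-of-rings sequence~\eqref{eq:les-cor}. But this is exactly where the history bites: Lichtenbaum's 1966 argument along these lines yields the inequality $\chi_j\ge0$ and the equality criterion for $j\ge2$, but \emph{not} for $j=1$; that case required Hochster's separate 1984 paper, which the survey explicitly singles out. Your syzygy reduction funnels the entire content of~(ii) into the case $j=1$, so in the mixed-characteristic setting you have reduced to precisely the case that the change-of-rings method does not reach. Nothing in your sketch indicates how to supply Hochster's missing step.

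A smaller point on part~(i): the references the paper cites (Roberts; Gillet--Soul\'e) establish vanishing for \emph{arbitrary} regular local rings via local Chern characters and $K$-theory, a route disjoint from reduction to the diagonal. Your diagonal argument handles the equi-characteristic case (this is Serre's), but the transfer of~(i) to unramified mixed characteristic via~\eqref{eq:les-cor} is only gestured at and is not obviously available.
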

Part \eqref{item:chi-van} is due, independently, to Roberts \cite{Ro1985} and Gillet-Soul\'e \cite{G-S}.  The inequality in part (2) is due to Lichtenbaum \cite{Li}.  The statement on equality in \eqref{item:unram-chi-van} was proved by Lichtenbaum \cite{Li} for $j\ge 2$ and by Hochster \cite{Ho1984} for $j=1$ .  (The ``if'' direction is clear from rigidity; the ``only if'' direction is the main point.)

The special case of Theorem~\ref{thm:chi-quartz}\eqref{item:chi-van} where $N$ has finite length is easy.  In this case, we have $\dim M < \dim S$, which means that $M$ is a torsion module.  Under these conditions, regularity of the ring is not necessary, as long as $M$ has finite projective dimension.

\begin{proposition}\label{prop:fin-len-chi}Let $(R,\fm,k)$ be a local ring, let $M$ be a finitely generated torsion $R$-module with finite projective dimension, and let $N$ be an $R$-module of finite length.  Then $\chi^R(M,N) = 0$.
\end{proposition}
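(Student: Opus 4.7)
The plan is to compute $\chi^R(M,N)$ directly from a finite free resolution of $M$ and then reduce the vanishing to the statement that the Euler characteristic of such a resolution of a torsion module is zero.

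Since $\pd_R M =: d < \infty$, I would start by choosing a finite free resolution
\[
0 \to F_d \to F_{d-1} \to \cdots \to F_0 \to M \to 0, \qquad r_i := \rank F_i.
\]
Tensoring with $N$ gives a bounded complex $F_\bullet \otimes_R N$ whose homology is $\Tor^R_\bullet(M,N)$. Because $N$ has finite length, each $F_i \otimes_R N$ is a direct sum of $r_i$ copies of $N$ and hence has length $r_i\,\len(N)$; in particular every $\Tor^R_i(M,N)$ is of finite length, so $\chi^R(M,N)$ is defined.

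Next, I would invoke the standard fact that for a bounded complex of finite-length modules, the alternating sum of lengths of the terms equals the alternating sum of lengths of the homology (a routine consequence of breaking the complex into short exact sequences). Applied to $F_\bullet \otimes_R N$ this gives
\[
\chi^R(M,N) \;=\; \sum_{i=0}^d (-1)^i \len\bigl(\Tor^R_i(M,N)\bigr) \;=\; \sum_{i=0}^d (-1)^i \len(F_i \otimes_R N) \;=\; \len(N)\cdot \sum_{i=0}^d (-1)^i r_i .
\]
So the problem is reduced to showing $\sum (-1)^i r_i = 0$.

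For this last step, I would localize at a minimal prime. Being a finitely generated torsion module, $M$ is annihilated by some non-zerodivisor $x \in R$; since $x$ avoids every associated prime of $R$, in particular every minimal prime, any minimal prime $\fp$ satisfies $x \notin \fp$, so $M_\fp = 0$. Then $(F_\bullet)_\fp$ is an exact complex of finite free modules over the Artinian local ring $R_\fp$, and taking lengths once more gives $\len(R_\fp)\cdot \sum (-1)^i r_i = 0$; since $\len(R_\fp) > 0$, the desired vanishing follows. There is no real obstacle here: the result is essentially a reformulation of the well-known fact that the ``rank'' of a module of finite projective dimension, read off from a finite free resolution, vanishes on torsion modules; the only mild care needed is in passing between lengths in the homology and in the terms of the complex, and in handling the possibly non-reduced local ring $R_\fp$.
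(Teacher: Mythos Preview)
Your proof is correct and follows essentially the same route as the paper's: both arguments reduce the question to showing that the alternating sum of ranks in a finite free resolution of $M$ vanishes, and both deduce this from the fact that $M$ dies after inverting the non-zerodivisors of $R$. The only cosmetic differences are that the paper first reduces to $N=k$ by induction on $\len N$ before examining the resolution, and then tensors with the total quotient ring $\Q(R)$ rather than localizing at a single minimal prime; your version is marginally more direct in that it handles general $N$ in one stroke via the Euler characteristic of the tensored complex.
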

\begin{proof}Euler characteristics are additive on short exact sequences.  Therefore, by induction on $\len_RN$, it will suffice to verify that $\chi^R(M,k) = 0$.
The $k$-dimension ($= \len$) of $\Tor_i^R(M,k)$ is the $i^{\text{th}}$ Betti number of $M$, that is, the rank of the $i^{\text{th}}$ free module in a minimal free resolution\begin{equation}\label{eq:min-free-res}
0\to R^{(b_n)} \to \dots \to R^{(b_0)} \to M \to 0
\end{equation}
of $M$.  When we tensor \eqref{eq:min-free-res} with the total quotient ring $\Q(R)$, the module $M$ disappears, and we count ranks of free modules in the resulting (split) exact sequence,  to get $\sum_{i=0}^d(-1)^ib_i = 0$, as desired.  \end{proof}

Here is a proof of the theorem of Peskine and Szpiro that torsion modules of projective dimension two are rigid.

\begin{proof}[Proof of Theorem~\ref{thm:pd2-rigid}] There is nothing to prove if $\dim R \le 1$.  Therefore we assume that $\dim R \ge 2$ and proceed by induction on $\dim R$.   We assume also that $M\ne 0$.  Let $N$ be a finitely generated $R$-module with $\Tor_1^R(M,N) = 0$.  We will show that $\Tor^R_2(M,N) = 0$. The inductive hypothesis implies that $(\Tor_2^R(M,N))_\fp= 0$ for each prime ideal $\fp \ne \fm$, that is, $\Tor^R_2(M,N)$ has finite length.

Put $L = \text{H}^0_\fm(N)$, the largest finite-length submodule of $N$.  Then
\begin{equation}\label{eq:pos-depth}
\depth N/L > 0\,.
\end{equation}
Applying $M\otimes_R-$ to the short exact sequence
\begin{equation}\label{eq:quotient}
0\to L \to N \to N/L \to 0\,,
\end{equation}
we get an exact sequence
\begin{equation}\label{eq:right-exact}
\Tor_2^R(M,N) \to \Tor_2^R(M,N/L) \to \Tor^R_1(M,L) \to 0\,.
\end{equation}
Therefore
\begin{equation}\label{eq:tor2-fin-len}
\Tor^R_2(M,N/L) \text{ has finite length}.
\end{equation}

Letting $Z$ be the first syzygy of $M$, we obtain short exact sequences:
\begin{equation}\label{eq:right-res}
0\to Z \to R^{(a)} \to M \to 0
\end{equation}
\begin{equation}\label{eq:left-res}
0\to R^{(c)} \to R^{(b)} \to Z \to 0
\end{equation}
Tensoring these exact sequences with $N/L$, we get an isomorphism and an injection:
\begin{equation}\notag{}
\Tor_2^R(M,N/L) \cong \Tor_1^R(Z,N/L) \qquad\qquad \Tor_1^R(Z,N/L)\hookrightarrow (N/L)^{(c)}
\end{equation}
In view of   \eqref{eq:pos-depth} and \eqref{eq:tor2-fin-len}, these imply that
\begin{equation}\label{eq:tor12-quartz}
\Tor_2^R(M,N/L) = 0\,.
 \end{equation}
 Now $\Tor_1^R(M,L) = 0$ by \eqref{eq:right-exact}, and  Proposition~\ref{prop:fin-len-chi} yields
\begin{equation}\notag{}
\len  \Tor_2^R(M,L) -0 + \len (M\otimes_RL) = \chi^R(M,L) = 0\,.
\end{equation}
Therefore $M\otimes_RL = 0$, and hence   $L=0$.  Now \eqref{eq:tor12-quartz} completes the proof.
\end{proof}

Over a local ring $R$, a finitely generated module of projective dimension $2$ is torsion if and only if its Betti sequence $(b_0,b_1,b_2)$ satisfies the equation $b_1=b_0+b_2$.  Proposition~\ref{prop:fin-len-chi} --- more directly, its proof --- gives the ``only if'' implication; the converse follows from the Auslander-Buchsbaum formula
\begin{equation}\label{eq:AB}
\pd_{R_{\fp}} M_{\fp} + \depth_{R_{\fp}} M_{\fp} = \depth R_{\fp}=0
\end{equation}
for every $\fp \in \Ass R$. Tchernev \cite{Tch} calls a triple $(b_0,b_1,b_2)$ of positive integers \emph{rigid} provided, for every local ring $R$, every $R$-module with Betti sequence $(b_0,b_1,b_2)$ is rigid.  Thus $(a,a+c,c)$ is rigid for every pair of positive integers $a,c$.   He shows also that $(a, c+1,c)$ is rigid.  Heitmann's example
\cite{He-Rigid} has Betti sequence $(8,4,2)$, so this sequence is not rigid.  Tchernev \cite{Tch} gives several families
of non-rigid sequences, for example, $(2a+4,a+2,a)$ for $a\ge 2$.  Both Heitmann \cite{He-Rigid} and Tchernev \cite{Tch}   use the ``universal resolutions'' introduced by Hochster \cite{Ho-Neb} in 1975 and generalized by Bruns \cite{Bruns} in 1984.

\section{More rigidity}\label{sec:more-rigidity}

Over a hypersurface, the long exact sequence \eqref{eq:les-cor}, together with vanishing of the Euler characteristic, yields rigidity theorems.  Here is the ``First Rigidity Theorem'' of Huneke and Wiegand \cite[Theorem 2.4]{HW1}, which, under suitable hypotheses, improves $2$-rigidity (guaranteed by Murthy's Theorem~\ref{thm:murthy-c+1}) to $1$-rigidty:

\begin{theorem}[Huneke and Wiegand, 1994]\label{thm:first-rigidity} Let $(R,\fm)$ be a local ring such that the completion $\widehat R$ is a hypersurface in a regular local ring $S$.  Let $M$ and $N$ be finitely generated $R$-modules such that
\begin{enumerate} [(i)]
\item \label{item:fin-len} $M\otimes_RN$ has finite length, and
\item \label{item:dim-sum} $\dim M + \dim N \le d:=\dim R$.
\end{enumerate}

Assume that $\Tor_j^R(M,N) = 0$ for some $j\ge 1$, and either $S$ is unramified or $j > d$.
Then $\Tor_i^R(M,N) = 0$ for all $i \ge j$.
\end{theorem}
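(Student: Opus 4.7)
The strategy is to pass to the completion, invoke Murthy's $2$-rigidity (Theorem~\ref{thm:c+1-rigid} with $c=1$) to reduce to a single additional vanishing, and finish by combining the change-of-rings long exact sequence~\eqref{eq:les-cor} with Theorem~\ref{thm:chi-quartz}.

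Passing to $\widehat R$ preserves Tor, finite length, and the dimension hypothesis, so I may assume $R = S/(f)$ for $(S,\fn)$ regular local of dimension $d+1$ with $f \in \fn$ a non-zerodivisor. By Theorem~\ref{thm:c+1-rigid} the pair $(M,N)$ is $2$-rigid over $R$; combined with the hypothesis $\Tor_j^R(M,N)=0$, the entire conclusion follows once I show $T_{j+1}^R := \Tor_{j+1}^R(M,N) = 0$. The dimension inequality, together with finite length of $M\otimes_S N = M\otimes_R N$, delivers $\chi^S(M,N) = 0$ by Theorem~\ref{thm:chi-quartz}(i).

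Write $t_n = \len T_n^R$, $s_n = \len T_n^S$ (finite, since $\Supp T_i \subseteq\{\fm\}$), and let $i_n = \len\im\delta_{n+1}$ for the connecting map $\delta_{n+1}\colon T_{n+1}^R \to T_{n-1}^R$ in~\eqref{eq:les-cor}. The LES decomposes into short exact sequences
\[
0 \to T_{n-1}^R/\im\delta_{n+1} \to T_n^S \to \ker\delta_n \to 0,
\]
giving $s_n = t_{n-1} + t_n - i_n - i_{n-1}$. Telescoping $\chi^S_{j+1}(M,N) = \sum_{n=j+1}^{d+1}(-1)^{n-j-1} s_n$ and using $t_j = 0$ together with $t_{d+1} = i_{d+1}$ (forced by $\chi^S = 0$) collapses the sum to the key identity $\chi^S_{j+1}(M,N) = -i_j$.

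If $S$ is unramified, Theorem~\ref{thm:chi-quartz}(ii) gives $\chi^S_{j+1} \ge 0$ with equality iff $T_{j+1}^S = 0$; combined with $-i_j \le 0$, both $\chi^S_{j+1}$ and $i_j$ vanish. The SES at $n=j+1$ then collapses (using $t_j = 0$ and $i_j = 0$) to force $T_{j+1}^R \cong T_{j+1}^S = 0$. If instead $j > d$, then $T_n^S = 0$ for every $n \ge j+1 > \dim S$; the SESs force the connecting maps $\delta_{n+1}$ to be isomorphisms for $n \ge d+2$, so the $T_n^R$ become periodic of period $2$ for $n \ge d+1$. Coupling this periodicity with $\chi^S = 0$ propagates $T_j^R = 0$ to the vanishing of both $T_{d+1}^R$ and $T_{d+2}^R$, hence of $T_n^R$ for every $n \ge d+1$. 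I expect the trickiest step to be the telescoping derivation of $\chi^S_{j+1} = -i_j$, where the boundary contributions at $n=j$ (killed by the hypothesis) and $n=d+1$ (absorbed by $\chi^S = 0$) must cancel cleanly, and the $j > d$ branch requires the separate periodicity argument in place of this identity.
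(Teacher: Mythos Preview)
Your proposal is correct and follows essentially the same approach as the paper: for $j>d$ you use periodicity of the $T^R_n$ together with $\chi^S=0$ (this is exactly the content of Proposition~\ref{prop:chi-van-rigid}), and for the unramified case with $j\le d$ your telescoping identity $\chi^S_{j+1}=-i_j$ is a concrete realization of what the paper calls the ``careful analysis of the long exact sequence~\eqref{eq:les-cor}'' combined with non-negativity of the higher Euler characteristics (Theorem~\ref{thm:chi-quartz}\eqref{item:unram-chi-van}). Your preliminary reduction via Murthy's $2$-rigidity is a tidy simplification, but otherwise the ingredients match the paper's sketch.
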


We do not know if the theorem would still be true if assumption \eqref{item:fin-len} were deleted.  There are, however, easy examples to show that assumption \eqref{item:dim-sum} is essential.  Here's one (see \cite[Example 4.1]{HW1}):

\begin{example}\label{eg:3-dim-A1} Let $R = k[\![x,y,u,v]\!]/(uv-xy)$, where $k$ is any field.  Put $I = (x,u)$ and $J = (y,v)$.  Then $(R/I)\otimes_R(R/J)$ has finite length, yet $\Tor_i^R(R/I,R/J) \ne 0$ if and only if $i$ is an even positive integer.  (Notice that $\dim R = 3$, and $\dim R/I + \dim R/J = 4$.)
\end{example}

The proof of Theorem~\ref{thm:first-rigidity} in the unramified case, when $j \le d$, requires a careful analysis of the long exact sequence \eqref{eq:les-cor} and uses non-negativity of the higher Euler characteristics (Theorem~\ref{thm:chi-quartz}\eqref{item:unram-chi-van}).  If $j>d$, the result follows from Theorem~\ref{thm:chi-quartz}\eqref{item:chi-van} and the next proposition, which gives more detailed information.

\begin{proposition}\label{prop:chi-van-rigid} Let $M$ and $N$ be finitely generated modules over a local ring $R$ whose completion $\widehat R$ is a hypersurface in a regular local ring $S$.  Assume that $M\otimes_RN$ has finite length and  that $\chi^S(\widehat M,\widehat N) = 0$.  If $i \ge j > d:=\dim R$, then $\len \Tor_i^R(M,N) = \len \Tor_j^R(M,N)$.
\end{proposition}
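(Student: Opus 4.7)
The plan is to pass to completion and use the change-of-rings long exact sequence \eqref{eq:les-cor}, together with the hypothesis $\chi^S(\widehat M,\widehat N)=0$, to force the $\Tor^R$-lengths beyond index $d$ to coincide. Since Tor commutes with completion, we may replace $R,M,N$ by their completions; then $R=S/(f)$ with $S$ complete regular of dimension $d+1$. Put $a_i:=\len\Tor_i^R(M,N)$ and $b_i:=\len\Tor_i^S(M,N)$. Since $M\otimes_R N$ has finite length we have $\Supp\Tor_i^R(M,N)\subseteq\Supp M\cap\Supp N=\{\fm\}$, so every $a_i$ is finite; every $b_i$ is finite as well, and $b_i=0$ for $i\ge d+2$ since $\dim S=d+1$. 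Finally, $b_0=a_0$ because $f$ annihilates $M$ and $N$, so $M\otimes_S N=M\otimes_R N$.

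First I would read a length identity off \eqref{eq:les-cor}. Let $s_i$ denote the length of the image of the connecting map $\delta\colon T_{i+1}^R\to T_{i-1}^R$, with the conventions $a_{-1}=s_{-1}=s_0=0$. Breaking \eqref{eq:les-cor} into the standard kernel/image pieces around each $T_i^S$ and using additivity of length yields
$$
b_i=a_i+a_{i-1}-s_i-s_{i-1}\qquad(i\ge 0).
$$
Forming $\chi^S(M,N)=\sum_{i=0}^{d+1}(-1)^i b_i$ and telescoping the $a$- and $s$-sums separately collapses everything to
$$
\chi^S(M,N)=(-1)^d\bigl(s_{d+1}-a_{d+1}\bigr),
$$
so the hypothesis $\chi^S=0$ forces $s_{d+1}=a_{d+1}$.

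Next I would pair this with two elementary consequences of \eqref{eq:les-cor}: if $T_i^S=0$ then $\delta\colon T_i^R\to T_{i-2}^R$ is injective, and if $T_{i-1}^S=0$ then it is surjective. Applying injectivity at $i=d+2$ (where $T_{d+2}^S=0$) gives $s_{d+1}=\len\delta(T_{d+2}^R)=a_{d+2}$, and combined with $s_{d+1}=a_{d+1}$ this yields $a_{d+1}=a_{d+2}$. For $i\ge d+3$ both $T_i^S=0$ and $T_{i-1}^S=0$, so $\delta$ is an isomorphism and $a_i=a_{i-2}$. A trivial induction starting from $a_{d+1}=a_{d+2}$ then gives $a_i=a_{d+1}$ for every $i\ge d+1$, which covers the range $i\ge j>d$ in the proposition.

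The main care in the argument is the telescoping calculation, where one must keep the low-end boundary terms aligned: in particular, $a_{-1}=0$ (so the $a$-telescope leaves only $\pm a_{d+1}$) and $b_0=a_0$ (so the $i=0$ term of the identity is consistent). Pleasantly, the proof uses only the vanishing of $\chi^S$ and never invokes Lichtenbaum's non-negativity of the higher Euler characteristics, so no unramifiedness hypothesis on $S$ is needed.
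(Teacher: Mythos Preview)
Your proof is correct and follows essentially the same approach as the paper: both pass to the completion, use the change-of-rings long exact sequence \eqref{eq:les-cor}, and read off an alternating-length identity forced by $\chi^S=0$. The only difference is organizational: the paper truncates \eqref{eq:les-cor} at an arbitrary $j>d$ (using $T^S_{j+1}=0$) and observes that in the alternating sum of lengths the $t^S_i$ add up to $\chi^S=0$ while the $t^R_i$ cancel in pairs except for $t^R_j$ and $t^R_{j+1}$, giving $t^R_{j+1}=t^R_j$ directly for every $j>d$; you instead introduce the image-lengths $s_i$, telescope once to get $s_{d+1}=a_{d+1}$, combine with injectivity of $\delta$ at $i=d+2$ to obtain $a_{d+2}=a_{d+1}$, and then invoke the eventual $2$-periodicity for the remaining indices. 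Same idea, slightly different bookkeeping.
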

\begin{proof} We can harmlessly pass to the completion.  Note that lengths are unchanged by this process, and, moreover, the length of an $\widehat R$-module $X$ is unchanged if $X$ is viewed as an $S$-module.  Thus, we may assume that $R = S/(f)$, where $f$ is a nonzero element of the maximal ideal of $S$.

With the notation of \eqref{eq:les-cor}, we have $T^S_{j+1} = 0$, since $j+1 > d+1 = \dim S$ and $S$ is regular.  Consider the resulting exact sequence
\begin{equation}\label{eq:mes}
0 \to T^R_{j+1} \to T^R_{j-1} \to T^S_j \to T^R_j\to \dots \to T^S_0 \to T_1^S\to T_1^R\to 0\,.
\end{equation}
(We changed the base ring on the tensor product $T_0$ from $R$ to $S$, for convenient counting.)  Put $t_i^A = \len T^A_i$, where $A$ is either $R$ or $S$.  We want to show that  $t^R_{j+1} = t^R_j$.
The alternating sum of the lengths of the modules in \eqref{eq:mes} is zero.  The alternating sum of the $t^S_i$ is the Euler characteristic, which is zero by assumption.  Most of the $t^R_i$ cancel in pairs, leaving only $t^R_{j+1}$ and $t^R_j$, and these occur with opposite signs.  That does it!
\end{proof}

Using the long exact sequence \eqref{eq:les-cor} and induction, one gets the following extension of Theorem~\ref{thm:first-rigidity}  (see \cite[Theorem 1.9]{HJW}):

\begin{theorem}[Huneke, Jorgensen and Wiegand, 2001] \label{thm:HJW} Let $R$ be a $d$-dimensional local ring such that $\widehat{R}=S/(\underline{f})$ where $(S,\mathfrak{n})$ is a complete regular local ring and $\underline{f}=f_{1},f_{2},\dots,f_{c}$ is a regular sequence in $\fn$. Let $M$ and $N$ be finitely generated $R$-modules. Assume the following conditions hold:
\begin{enumerate}[(i)]
    \item $M\otimes_{R}N$ has finite length.
    \item $\dim(M)+\dim(N)<d+c$.
    \item $\Tor^{R}_{j}(M,N)=\ldots =\Tor^{R}_{j+c-1}(M,N)=0$ for some positive integer $j$.
    \item Either $j>d$ or else $S$ is unramified.
\end{enumerate}
Then $\Tor^{R}_{i}(M,N)=0$ for all $i\geq j$.
\end{theorem}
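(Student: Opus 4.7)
The plan is to induct on the codimension $c$, with base case $c=1$ being exactly Theorem~\ref{thm:first-rigidity}. After passing to the completion (which preserves lengths, dimensions, and Tor modules), we may write $R=S/(\underline f)$ and set $R'=S/(f_1,\dots,f_{c-1})$, so that $R'$ is a complete intersection of codimension $c-1$ in $S$ with $\dim R'=d+1$, and $R=R'/(f_c)$ with $f_c$ a non-zerodivisor on $R'$. This is precisely the setup in which the long exact sequence \eqref{eq:les-cor} applies, with $R'$ playing the role of $S$.

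From the segment $T^R_{i-2}\to T^{R'}_{i-1}\to T^R_{i-1}$ of this sequence, vanishing of $T^R_{i-2}$ and $T^R_{i-1}$ forces $T^{R'}_{i-1}=0$. Applying this for $i-1\in\{j+1,\dots,j+c-1\}$, using the given $c$ consecutive vanishings over $R$, produces $c-1$ consecutive vanishings $T^{R'}_{j+1}=\cdots=T^{R'}_{j+c-1}=0$. The hypotheses of the theorem transfer to $(M,N)$ over $R'$ at starting index $j+1$: the identification $M\otimes_{R'}N\cong M\otimes_RN$ (valid since $f_c\cdot N=0$) preserves the finite-length condition; the bound $\dim M+\dim N<d+c$ equals $\dim R'+(c-1)$; and either $S$ remains unramified, or else $j+1>\dim R'=d+1$ since $j>d$. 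The inductive hypothesis therefore yields $T^{R'}_i(M,N)=0$ for all $i\geq j+1$.

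It remains to show $T^R_i=0$ for $i\geq j+c$. The segment $T^{R'}_i\to T^R_i\to T^R_{i-2}\to T^{R'}_{i-1}$ of the long exact sequence, with both $T^{R'}$ terms now zero, yields $T^R_i\cong T^R_{i-2}$ for all $i\geq j+c$. An induction on $i$, seeded by the given vanishing $T^R_k=0$ for $k\in\{j,\dots,j+c-1\}$, then propagates to give $T^R_i=0$ for all $i\geq j+c$. The main obstacle is largely bookkeeping: tracking the indices through the long exact sequence and verifying that the dimension and unramified-or-large-index hypotheses descend properly to $R'$. The essential mechanism---trading $c$ consecutive Tor vanishings over $R$ for $c-1$ consecutive vanishings over $R'$---is the same strategy used in the inductive proof of Theorem~\ref{thm:c+1-rigid}; what is new here is the additional information (finite length, dimension bound) carried along so that Theorem~\ref{thm:first-rigidity}, rather than mere $c+1$-rigidity, can be invoked at the base.
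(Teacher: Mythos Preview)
Your proof is correct and follows precisely the approach the paper indicates: induction on the codimension $c$ via the change-of-rings long exact sequence \eqref{eq:les-cor}, with the base case $c=1$ being Theorem~\ref{thm:first-rigidity}. The bookkeeping---extracting $c-1$ consecutive vanishings over $R'$, transferring the finite-length and dimension hypotheses (noting $\dim R' + (c-1) = d+c$), and propagating the isomorphisms $T^R_i\cong T^R_{i-2}$ back down---is handled correctly, and your remark that this parallels the proof of Theorem~\ref{thm:c+1-rigid} with the First Rigidity Theorem replacing regularity at the base is exactly the point.
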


In the next section we will encounter a sharper version of the theorem, due to Jorgensen, in the case $n>d$.  For now, we mention another $c$-rigidity theorem, due to Celikbas \cite[Proposition 4.9]{Ce}, where assumptions (i) and (ii) are replaced by the assumption that all three modules are Cohen-Macaulay.  (Recall that a finitely generated module $M$ over a local ring $R$ is \emph{Cohen-Macaulay} provided $\depth M = \dim M$ and \emph{maximal Cohen-Macaulay} (MCM) provided $\depth M = \dim R$.)

\begin{theorem}[Celikbas, 2010] \label{thm:Ce-CM} Let $R$ be a $d$-dimensional local ring such that $\widehat{R}=S/(\underline{f})$ where $(S,\mathfrak{n})$ is a complete, unramified regular local ring and $\underline{f}=f_{1},f_{2},\dots,f_{c}$ is a regular sequence in $\fn$ with $c\ge 2$.  Let $M$ and $N$ be finitely generated $R$-modules. Assume the following conditions hold:
\begin{enumerate}[(i)]
    \item $M$, $N$, and $M\otimes_{R}N$ are Cohen-Macaulay.
    \item $\Tor^{R}_1(M,N)=\ldots =\Tor^{R}_c(M,N)=0$.
\end{enumerate}
Then $\Tor_i^R(M,N) = 0$ for all $i\ge 1$.
\end{theorem}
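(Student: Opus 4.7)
The strategy is to reduce to the finite-length case so that Theorem~\ref{thm:HJW} (or Theorem~\ref{thm:first-rigidity} when $c=1$) applies cleanly. Completing $R$, assume $R = S/(\underline f)$ with $S$ a complete, unramified regular local ring; after a standard preliminary adjustment of $S$, we may assume $(\underline f) \subseteq \fn^2$, so that the codimension of $R$ in the paper's sense equals the relative codimension $c$. A key input is the \emph{depth formula} for complete intersections under $c$ consecutive vanishing $\Tor$s: the hypothesis $\Tor_1^R(M,N) = \cdots = \Tor_c^R(M,N) = 0$ implies $\depth M + \depth N = \depth R + \depth(M \otimes_R N)$, which by the Cohen-Macaulay assumptions translates to $\dim M + \dim N = d + t$, where $t := \dim(M \otimes_R N)$.

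When $c = 1$, the hypothesis $\dim M + \dim N \le d$ forces $t = 0$, so $M \otimes N$ has finite length and Theorem~\ref{thm:first-rigidity} yields the conclusion at once. So assume $c \ge 2$. If $t = 0$, then $\dim M + \dim N = d < d+c$ and Theorem~\ref{thm:HJW} applies directly to $R$. Otherwise $t \ge 1$, and we reduce modulo a carefully chosen regular sequence: by prime avoidance, pick $\underline x = x_1,\dots,x_t \in \fm$ that is $R$-, $M$-, $N$-, and $(M\otimes N)$-regular, with images in $\fm/\fm^2$ linearly independent. Using the identification $\fm/\fm^2 \cong \fn/\fn^2$ (valid since $(\underline f) \subseteq \fn^2$), lift these to $\tilde{\underline x} \subseteq \fn$ that extends to a regular system of parameters of $S$. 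Then $S' := S/(\tilde{\underline x})$ is a complete unramified regular local ring, and $\bar R := R/(\underline x) = S'/(\underline f)$ is a complete intersection of the \emph{same} codimension $c$ in $S'$, of dimension $d-t$.

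The reduced modules $\bar M = M/\underline x M$, $\bar N = N/\underline x N$, and $\bar M \otimes_{\bar R} \bar N = \overline{M \otimes N}$ are Cohen-Macaulay over $\bar R$, and the tensor product now has finite length. The depth identity gives $\dim \bar M + \dim \bar N = d - t = \dim \bar R < \dim \bar R + c$. The change-of-rings spectral sequence
\[ E^2_{p,q} = H^{\mathrm{Kos}}_q\bigl(\underline x;\, \Tor_p^R(M,N)\bigr) \Longrightarrow \Tor_{p+q}^{\bar R}(\bar M, \bar N), \]
together with $\Tor_p^R(M,N) = 0$ for $1 \le p \le c$ and the $(M\otimes N)$-regularity of $\underline x$, yields $\Tor_i^{\bar R}(\bar M, \bar N) = 0$ for $1 \le i \le c$. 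All hypotheses of Theorem~\ref{thm:HJW} now hold for $\bar R$, so $\Tor_i^{\bar R}(\bar M, \bar N) = 0$ for every $i \ge 1$. The same spectral sequence identifies $E^\infty_{c+1,0} = \Tor_{c+1}^R(M,N)/\underline x\,\Tor_{c+1}^R(M,N)$ as a subquotient of $\Tor_{c+1}^{\bar R}(\bar M, \bar N) = 0$; Nakayama gives $\Tor_{c+1}^R(M,N) = 0$, and $(c+1)$-rigidity over $R$ (Theorem~\ref{thm:c+1-rigid}) finishes the argument.

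The main obstacle is keeping the codimension of the ambient regular ring equal to $c$ after the reduction — otherwise Theorem~\ref{thm:HJW} would demand $c+t$ consecutive vanishing $\Tor$s over $\bar R$, rather than the $c$ the spectral sequence provides. This forces the subtle choice of $\underline x$ to be liftable to part of a regular system of parameters of $S$, which in turn rests on the identification $\fm/\fm^2 \cong \fn/\fn^2$ after the preliminary adjustment ensuring $(\underline f) \subseteq \fn^2$. Carefully tracking the spectral sequence — in both the transfer direction and the Nakayama-type lift back — is the secondary technical point.
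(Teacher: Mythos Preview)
The survey does not actually prove Theorem~\ref{thm:Ce-CM}; it merely quotes the result from \cite{Ce}.  That said, your proposal has a genuine gap at its very first step: you assert that the vanishing $\Tor_1^R(M,N)=\cdots=\Tor_c^R(M,N)=0$ already yields the depth formula $\depth M+\depth N=\depth R+\depth(M\otimes_RN)$.  This is false.  Take $c=1$ and the data of Example~\ref{eg:3-dim-A1}: with $R=k[\![x,y,u,v]\!]/(uv-xy)$, $M=R/(x,u)$, $N=R/(y,v)$, one has $\Tor_1^R(M,N)=0$ and all three of $M$, $N$, $M\otimes_RN$ are Cohen--Macaulay (of dimensions $2$, $2$, $0$), yet $2+2\ne 3+0$.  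In this paper the depth formula (Theorem~\ref{thm:DF}) is available \emph{only} for Tor-independent pairs---exactly what you are trying to establish---and no ``$c$ consecutive vanishing'' version is on offer.  Both of your key deductions, namely $t=0$ when $c=1$ and $\dim\bar M+\dim\bar N=d-t$ when $c\ge2$, rest on this unproved identity, so your invocation of Theorems~\ref{thm:first-rigidity} and~\ref{thm:HJW} is not justified.

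The remaining architecture---reduce $t=\dim(M\otimes_RN)$ by going modulo an element regular on $R$, $M$, $N$, $M\otimes_RN$; use the exact sequence $0\to\Tor_i^R(M,N)/x\to\Tor_i^{\bar R}(\bar M,\bar N)\to\ker(x\mid\Tor_{i-1}^R(M,N))\to 0$ to propagate the $c$ vanishing Tors; then lift back via Nakayama and finish with $(c{+}1)$-rigidity (Theorem~\ref{thm:c+1-rigid})---is sound and is essentially how the result is proved.  What is missing is a correct handling of the base case $t=0$.  For $c=1$ no depth formula is needed there: hypothesis (iii) \emph{is} the required inequality $\dim M+\dim N\le d$.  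For $c\ge2$ one must separately establish $\dim M+\dim N<d+c$.  One route: the change-of-rings spectral sequence $E^2_{p,q}=\Tor_p^R(M,N)^{\binom{c}{q}}\Rightarrow\Tor_{p+q}^S(M,N)$ together with $\Tor_1^R=\Tor_2^R=0$ gives $\Tor_1^S(M,N)\cong(M\otimes_RN)^c\ne0$; then the (valid) derived depth formula over the regular ring $S$ forces the strict inequality.  Your write-up skips this and substitutes the invalid $R$-level depth formula.
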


The motivation for Theorem \ref{thm:Ce-CM} comes from an unpublished result of Huneke and Wiegand; see \cite[Theorem 4.1]{Ce}. In Theorems \ref{thm:HJW} and \ref{thm:Ce-CM} the assumption, in the cited references, that $(\underline{f})\subseteq \mathfrak n^2$
is unncessary:  If $(\underline{f})\not\subseteq \mathfrak n^2$, then $R$ is a complete
intersection of codimension $c-1$, and Murthy's theorem \ref{thm:murthy-c+1} implies $c$-rigidity.

\section{Complexity}\label{sec:complexity}

One can find several variations and  generalizations of these rigidity results in the literature; see, for example, \cite{Ce}, \cite{Da1}, \cite{Jo1}, \cite{Jo2} and \cite{Sadeghi1}.  Here we mention two, due to Jorgensen \cite[Proposition 2.3 and Theorem 2.6]{Jo1}, that depend on  \emph{complexity}  (see \cite{AvBu}, \cite{Da2}).

Recall that the \textit{complexity} $\cx B$ of a sequence of nonnegative integers $B =
(b_i)_{i\geq 0}$ is defined to be the smallest non-negative integer $r$ (if one exists) such that, for some real number $A$, one has $b_n\leq A \cdot n^{r-1}$ for all $n\gg 0$.
Thus, for example, $\cx B \le 1$ if and only if $B$ is bounded, and $\cx B = 0$ if and only if $b_n= 0$ for all $n\gg0$.  If, now, $R$ is a local ring, the complexity  $\cx_RM$ of a finitely generated $R$-module $M$ is defined by $\cx_RM = \cx(\beta_i^R(M))$, where $\beta_i$ is the $i^{\text{th}}$ Betti number of $M$, that is, the rank of the $i^{\text{th}}$ free module in a minimal resolution of $M$. Over a complete intersection $(R,\mathfrak m)$, one always has $\cx_R(M) \le \codim R$ (see Shamash \cite{Sh} or Gulliksen \cite{Gu}).  Here is \cite[Proposition 2.3]{Jo1}:

\begin{theorem} [Jorgensen, 1999]  \label{thm:Jo-cx} Let $R$ be a  complete intersection, and let $M$ and $N$ be finitely generated $R$-modules. Set $d=\dim(R)$, $r=\min \{ \cx(M), \cx(N) \}$ and $b=\max\{\depth M, \depth N\}$. If $\Tor^R_n(M,N),\dots,\Tor^R_{n+r}(M,N)$ vanish
for some integer $n\geq d-b+1$, then $\Tor^{R}_{i}(M,N)=0$ for all $i\geq d-b+1$.
\end{theorem}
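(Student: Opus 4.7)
The plan is to pass to the completion of $R$, which preserves all the invariants involved, and then induct on the codimension $c$ of $R$. After completion we may assume $R=S/(f_1,\dots,f_c)$ where $(S,\mathfrak n)$ is a complete regular local ring and $f_1,\dots,f_c$ is a regular sequence in $\mathfrak n$. The base case $c=0$ is immediate: $R$ is regular, both complexities vanish so $r=0$, and the Auslander--Buchsbaum formula gives $\min\{\pd_R M,\pd_R N\}=d-b$, whence $\Tor_i^R(M,N)=0$ for all $i\ge d-b+1$ automatically, with no vanishing hypothesis needed.

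For the inductive step $c\ge 1$, write $R=R'/(f_c)$ where $R':=S/(f_1,\dots,f_{c-1})$ is a complete intersection of codimension $c-1$ and dimension $d+1$, and $f_c$ is a non-zerodivisor in the maximal ideal of $R'$. Because $f_c$ annihilates every $R$-module, the depths of $M$ and $N$ over $R'$ coincide with those over $R$, so the invariant $b$ is unchanged. Using Avramov's cohomological support variety theory (after a harmless finite extension of the residue field if necessary), one arranges that $\cx_{R'}M\le \cx_R M-1$ and $\cx_{R'}N\le \cx_R N-1$ whenever these are positive, so $r':=\min\{\cx_{R'}M,\cx_{R'}N\}\le r-1$ (when $r\ge 1$). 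The long exact sequence \eqref{eq:les-cor}, with $T^A_i:=\Tor^A_i(M,N)$ for $A\in\{R,R'\}$, contains the three-term piece $T^R_{i-1}\to T^{R'}_i\to T^R_i$, so the hypothesized $r+1$ consecutive vanishings $T^R_n=\cdots=T^R_{n+r}=0$ produce $r$ consecutive vanishings $T^{R'}_{n+1}=\cdots=T^{R'}_{n+r}=0$, beginning at index $n+1\ge (d+1)-b+1=\dim R'-b+1$. Since $r\ge r'+1$, the inductive hypothesis applied to $R'$ yields $T^{R'}_i=0$ for all $i\ge d-b+2$.

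Feeding this back into \eqref{eq:les-cor}, the piece $T^{R'}_i\to T^R_i\to T^R_{i-2}\to T^{R'}_{i-1}$ has zero outer terms for $i\ge d-b+3$, giving the two-step periodicity $T^R_i\cong T^R_{i-2}$. The hypothesized vanishings cover both parities as soon as $r\ge 1$, so each of the two parity chains $\{d-b+1, d-b+3, \dots\}$ and $\{d-b+2, d-b+4, \dots\}$ contains a zero, whence every element of each chain is zero, giving $T^R_i=0$ for all $i\ge d-b+1$. The boundary case $r=0$, in which one of $M$, $N$ has finite projective dimension, must be handled directly: by Auslander--Buchsbaum $\pd_R M=d-\depth M$ (say), so $T^R_i=0$ for $i>\pd_R M$ automatically, and one closes any remaining range $[d-b+1,\pd_R M]$ by invoking Murthy's $(c+1)$-rigidity (Theorem~\ref{thm:c+1-rigid}) starting from the single hypothesized vanishing.

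The main obstacle is securing the simultaneous complexity drop $\cx_{R'}M<\cx_R M$ and $\cx_{R'}N<\cx_R N$ through a suitable choice of $f_c$. This relies on Avramov's identification of complexity with the Krull dimension of the cohomological support variety and requires choosing the image of $f_c$ in $\mathfrak n/\mathfrak n^2$ (modulo the other generators) so that it lies outside the support varieties of both $M$ and $N$ — possible for a generic choice after extending the residue field, since each support variety has dimension strictly less than $c$ in an ambient projective space of dimension $c-1$, hence their union does not fill the ambient space. A subsidiary subtlety is the parity argument at the end: the two-step periodicity only propagates zero when both parities are represented in the hypothesized vanishing range $[n,n+r]$, which is precisely the threshold $r\ge 1$ and forces one to treat the $r=0$ base case by different means.
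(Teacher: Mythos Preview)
This survey does not supply its own proof of Theorem~\ref{thm:Jo-cx}; the result is simply quoted from Jorgensen~\cite[Proposition~2.3]{Jo1}. Your overall architecture---induct on the codimension, at each stage choose the deformation so that complexity drops (via support varieties, after extending the residue field if needed), transfer $r+1$ consecutive vanishings over $R$ to $r$ over $R'$ through the long exact sequence~\eqref{eq:les-cor}, invoke the inductive hypothesis over $R'$, and then read off two-step periodicity of $\Tor^R$---is the standard route and is essentially Jorgensen's. The inductive step for $r\ge1$ is sound. (A minor remark: you need only the complexity of the module realizing the minimum to drop, not both; this simplifies the support-variety avoidance argument.)

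The genuine gap is in the boundary case $r=0$. You claim the remaining range $[d-b+1,\pd_RM]$ can be closed ``by invoking Murthy's $(c+1)$-rigidity starting from the single hypothesized vanishing,'' but Theorem~\ref{thm:c+1-rigid} requires $c+1$ \emph{consecutive} vanishings, and you have exactly one; the automatic vanishings in degrees above $\pd_RM$ are not contiguous with it unless $n\ge\pd_RM$, in which case Murthy tells you nothing new. In fact the hypothesis is vacuous when $r=0$ (take $n=\pd_RM+1\ge d-b+1$), so what you are really trying to prove is the unconditional assertion that $\pd_RM<\infty$ forces $\sup\{i:\Tor^R_i(M,N)\ne0\}\le d-\max\{\depth M,\depth N\}$. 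When $\depth M\ge\depth N$ this is Auslander--Buchsbaum, but when $\depth N>\depth M$ it is not, and Murthy does not give it. Note too that a one-vanishing rigidity statement for modules of finite projective dimension over a complete intersection is precisely the open Question~\ref{q:CI-both-fpd}(i), so that route is closed as well. You need a different argument here; consult Jorgensen's paper for the correct treatment.
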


The modules $M_s$ and $N$ in
Example~\ref{eg:Av-Jo} both have complexity $c$ (see  \cite[Example 4.1]{Jo1}); again we see that the vanishing interval (of length $r+1$) cannot in general be shortened.  However, when the tensor product of the modules has finite length and the complexities of the modules are equal, one can do better (see \cite[Theorem 2.6]{Jo1}):

\begin{theorem}[Jorgensen, 1999] \label{thm:Jo-fin-len-rigid}
Let $R$ be a  complete intersection, and let $M$ and $N$ be finitely generated $R$-modules. Set $d=\dim(R)$, $r=\max \{ \cx(M), \cx(N), 1 \}$, and $b=\depth M + \depth N$.   Assume
\begin{enumerate}[(i)]
\item $M\otimes_RN$ has finite length, and
\item $\dim M + \dim N < d + r$.
\end{enumerate}
If $\Tor^R_n(M,N)= \dots=\Tor^R_{n+r-1}(M,N) = 0$
for some integer $n\geq d-b+1$, then $\Tor^{R}_{i}(M,N)=0$ for all $i\geq d-b+1$.
\end{theorem}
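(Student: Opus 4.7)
The plan is to reduce to the setting of Theorem~\ref{thm:Jo-cx} by leveraging the finite length and dimension hypotheses to supply one additional $\Tor$-vanishing beyond the $r$ provided by hypothesis. First I would pass to the completion, so that $R=S/(\underline{f})$ with $(S,\fn)$ a complete regular local ring and $\underline{f}=f_1,\dots,f_c$ a regular sequence; lengths, depths, dimensions, complexities, and Tor are all unchanged. Since complexity is bounded by codimension over a complete intersection, $r\le c$, and the hypothesis $\dim M+\dim N<d+r$ gives $\dim_S M+\dim_S N<d+c=\dim S$. Theorem~\ref{thm:chi-quartz}\eqref{item:chi-van} then delivers $\chi^S(M,N)=0$.

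Next I would exploit this Euler-characteristic vanishing through an iterated version of the Murthy--Lichtenbaum change-of-rings long exact sequence \eqref{eq:les-cor}, applied along the tower $S=R_0,\,R_1=S/(f_1),\,\dots,\,R_c=R$. At each step the sequence expresses the alternating sum of lengths of $\Tor^{R_{i-1}}(M,N)$ in terms of those of $\Tor^{R_i}(M,N)$, and iterating produces an identity that relates $\chi^S(M,N)$ to a signed combination of lengths of $\Tor^R_j(M,N)$, with boundary adjustments controlled by the assumption $n\ge d-b+1$ (this depth bound ensures that the only $\Tor^R_i$'s that can contribute to the tail lie beyond the relevant range). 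Combining $\chi^S(M,N)=0$, the vanishing of the $r$ consecutive Tors $\Tor^R_n,\dots,\Tor^R_{n+r-1}$, and the non-negativity of higher partial Euler characteristics from Theorem~\ref{thm:chi-quartz}\eqref{item:unram-chi-van}, the signed sum should collapse to a sum of non-negative terms with total zero, forcing one further $\Tor^R_{n+r}(M,N)=0$.

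With $r+1$ consecutive $\Tor^R$'s now vanishing at positions $n,n+1,\dots,n+r$ with $n\ge d-b+1$, and noting that $\min\{\cx M,\cx N\}\le\max\{\cx M,\cx N,1\}=r$, Theorem~\ref{thm:Jo-cx} applies directly and delivers the desired conclusion $\Tor^R_i(M,N)=0$ for all $i\ge d-b+1$.

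The main obstacle will be the combinatorial bookkeeping in the iterated long exact sequence: tracking which lengths cancel, using the finite-length hypothesis on $M\otimes_RN$ and the bound $n\ge d-b+1$ to control the tail contributions, and verifying that the non-negativity of the higher partial characteristics really does force $\Tor^R_{n+r}(M,N)=0$ rather than some deeper combination. This step is a higher-codimension analog of the argument of Proposition~\ref{prop:chi-van-rigid}, and managing the signs and index ranges across the $c$-fold iteration is the technical heart of the proof.
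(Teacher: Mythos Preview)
The survey does not prove this theorem; it is quoted from \cite[Theorem~2.6]{Jo1} without argument, so there is no in-paper proof to compare against. I can still flag a genuine gap in your plan.

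Your final step is to invoke Theorem~\ref{thm:Jo-cx} after manufacturing an $(r+1)$st consecutive vanishing Tor. But the two theorems use \emph{different} $b$'s: Theorem~\ref{thm:Jo-cx} sets $b=\max\{\depth M,\depth N\}$, whereas here $b=\depth M+\depth N$. Since the sum dominates the maximum, the present hypothesis $n\ge d-(\depth M+\depth N)+1$ is strictly weaker than what Theorem~\ref{thm:Jo-cx} requires; your $n$ may well lie below $d-\max\{\depth M,\depth N\}+1$, in which case Theorem~\ref{thm:Jo-cx} simply does not apply. Even when it does apply, its conclusion delivers vanishing only from index $d-\max\{\depth M,\depth N\}+1$ onward, which falls short of the stronger conclusion claimed here. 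Bridging that gap is not a bookkeeping matter---it is exactly where the finite-length hypothesis~(i) does essential work, and your outline gives no indication of how to handle it.

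A secondary concern: you rely on Theorem~\ref{thm:chi-quartz}\eqref{item:unram-chi-van}, whose non-negativity and equality statements require the ambient regular ring $S$ to be unramified, an assumption absent from the present theorem. One can always pass to an unramified presentation at the cost of raising the relative codimension by one, but then your iterated change-of-rings accounting must be redone with an extra layer, and that accounting is already, by your own admission, the unverified ``technical heart'' of the proposal.
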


\section{Vanishing}\label{sec:vanishing}

 In this section we seek conditions that force $M$ and $N$ (finitely generated modules over a local ring $(R,\fm )$) to be \emph{Tor-independent}, by which we mean that $\Tor_i^R(M,N) = 0$ for all $i\ge 1$.  An important consequence of Tor-independence, at least over complete intersections \cite[Proposition 2.5]{HW1}, is the \emph{depth formula}.

\begin{theorem}[Huneke and Wiegand, 1994]\label{thm:DF}
Let $M$ and $N$ be Tor-independent finitely generated modules
 over a complete intersection $(R,\fm)$.   Then the depth formula holds:
 \begin{equation}\label{eq:DF}
 \depth M + \depth N = \depth (M\otimes_RN) + \depth R\,.
 \end{equation}
 \end{theorem}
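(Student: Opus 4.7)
My plan is to pass to the completion, write $\widehat R = S/(\underline f)$ with $S$ a complete regular local ring and $\underline f = f_1,\dots,f_c$ a regular sequence in the maximal ideal of $S$, and induct on the codimension $c$. Completion preserves depth and $\Tor$, so this reduction is harmless. The base case $c = 0$ is Auslander's classical depth formula in \cite{Au}: for a Tor-independent pair over any local ring in which one of the modules has finite projective dimension, one has $\depth M + \depth N = \depth R + \depth(M\otimes_R N)$. Regularity of $R = S$ forces $\pd_R M < \infty$, so this gives the desired equality.

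For the inductive step I set $R' := S/(f_1,\dots,f_{c-1})$, a complete intersection of codimension $c-1$, so that $R = R'/(f_c)$ with $f_c$ a non-zerodivisor on $R'$. Applying the change-of-rings long exact sequence \eqref{eq:les-cor} to the pair $(M,N)$ (with $R'$ in place of $S$) and using the hypothesis $\Tor^R_i(M,N) = 0$ for every $i \geq 1$ gives $\Tor^{R'}_i(M,N) = 0$ for all $i \geq 2$ and identifies $\Tor^{R'}_1(M,N) \cong M \otimes_R N$. The main obstacle is that this remaining $\Tor^{R'}_1$ need not vanish, so $(M,N)$ is not Tor-independent over $R'$ and the inductive hypothesis does not apply to $(M,N)$ directly.

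To bypass this, I would replace $N$ by its first $R'$-syzygy $Z$, arising from a short exact sequence $0 \to Z \to F \to N \to 0$ with $F$ a finite free $R'$-module of positive rank $n$. Dimension shifting gives $\Tor^{R'}_i(M,Z) \cong \Tor^{R'}_{i+1}(M,N) = 0$ for every $i \geq 1$, so $(M,Z)$ is Tor-independent over $R'$, and the inductive hypothesis supplies $\depth_{R'} M + \depth_{R'} Z = \depth_{R'}(M\otimes_{R'}Z) + \depth R'$. To finish, I would translate this identity into the target formula over $R$ via three ingredients: (a) for any $R$-module $X$ one has $\depth_{R'} X = \depth_R X$ because $f_c$ annihilates $X$, applied both to $M$ and (noting that $M$ kills $f_c$) to $M\otimes_{R'}Z$; (b) the depth lemma applied to $0 \to Z \to F \to N \to 0$ yields $\depth_{R'} Z = \depth_R N + 1$, while $\depth R' = \depth R + 1$; and (c) the four-term exact sequence $0 \to M\otimes_R N \to M\otimes_{R'}Z \to M^n \to M\otimes_R N \to 0$ (coming from $\Tor^{R'}_1(M,N) \cong M\otimes_R N$), split into two short exact sequences and analyzed via the depth lemma, forces $\depth_R(M\otimes_{R'}Z) = \depth_R(M\otimes_R N)$. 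Substituting (a)--(c) into the inductive formula, the two instances of ``$+1$'' cancel and give exactly $\depth_R M + \depth_R N = \depth_R(M\otimes_R N) + \depth R$. The most delicate piece is step (c), where one must case-analyze the relative sizes of $\depth_R M$ and $\depth_R(M\otimes_R N)$; the analysis is clean because $\depth_R(M\otimes_R N) \leq \depth_R M$ always (since $M\otimes_R N$ is a quotient of $M^m$ for $m$ any number of generators of $N$).
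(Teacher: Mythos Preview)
The survey does not actually prove Theorem~\ref{thm:DF}; it merely states the result and cites \cite[Proposition~2.5]{HW1}. So there is no in-paper proof to compare against, and I will assess your argument on its own merits.

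Your overall plan is sound and is essentially the strategy of \cite{HW1}: complete, induct on $c$, use Auslander's formula \eqref{eq:aus-DF} for the regular base case, and for the inductive step pass to an $R'$-syzygy so as to manufacture a Tor-independent pair over $R'$. The derivation of $\Tor^{R'}_i(M,N)=0$ for $i\ge 2$ and $\Tor^{R'}_1(M,N)\cong M\otimes_RN$ from \eqref{eq:les-cor} is correct, as are steps (a) and (b) and the four-term exact sequence in (c).

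The gap is in your final sentence. The assertion ``$\depth_R(M\otimes_RN)\le\depth_RM$ always, since $M\otimes_RN$ is a quotient of $M^m$'' is simply false as stated: quotients can have strictly larger depth (over $k[\![x,y]\!]$ the module $R/(x^2,xy)$ has depth~$0$, while its quotient $R/(x)$ has depth~$1$). More to the point, the depth lemma alone cannot close the case $a:=\depth(M\otimes_RN)>m:=\depth M$. Abstractly, a four-term exact sequence $0\to A\to B\to C\to A\to 0$ with $\depth B=\depth C<\depth A$ is possible: over a discrete valuation ring $R$ take $0\to R\to R\oplus k\to R\oplus k\to R\to 0$ with the evident split maps. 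In your situation this is precisely the residual case; the depth-lemma analysis there yields only $\depth K=m$ and $b=m$ (equivalently $\depth N=\depth R$), and no contradiction.

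The fix is to exploit symmetry. Run the analogous argument with the first $R'$-syzygy $W$ of $M$; one gets a companion four-term sequence
\[
0\to M\otimes_RN\to W\otimes_{R'}N\to N^g\to M\otimes_RN\to 0
\]
with $\depth_R(W\otimes_{R'}N)=b$ by induction. Your depth-lemma case analysis now goes through verbatim whenever $a\le\depth N$. But in the problematic case $a>m$ you have already forced $\depth N=\depth R$, and one always has $a\le\dim R=\depth R=\depth N$; hence the symmetric argument applies and gives $a=b$. With this one-line repair your proof is complete.
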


In view of the Auslander-Buchsbaum formula \eqref{eq:AB}, the depth formula \eqref{eq:DF} is  a natural extension
of Auslander's formula \cite[Theorem 1.2]{Au}
\begin{equation}\label{eq:aus-DF}
\depth N =\depth (M\otimes_RN) + \pd_RM
\end{equation}
(valid whenever $\pd_RM <\infty$ and $M$ and $N$ are Tor-independent) to modules of infinite projective dimension.
When \emph{both} modules have finite projective dimension and are Tor-independent,
 Auslander's formula takes the pleasant form $\pd_RM +\pd_RN = \pd_R(M\otimes_RN)$, as stated in the introduction.
 The strict inequality $\pd_R(A\otimes_RB) < \dim R$, also mentioned in the introduction, is equivalent, by \eqref{eq:AB}, to the inequality $\depth (A\otimes_RB) >0$, which holds since, by assumption, $A\otimes_RB$ is torsion-free and $\dim R > 0$.

The depth formula holds for Tor-independent modules over an arbitrary local ring provided one of the modules has finite complete intersection dimension \cite{AGP}; see \cite{AY}, \cite{Iy}.   There appear to be no known counterexamples to the depth formula, for Tor-independent finitely generated modules over arbitrary local rings; see also \cite{BeJ}, \cite{Sadeghi2} and \cite{SSY}.

Vanishing of $\Tor_i^R(M,N)$ for all $i\gg0$ is not enough to yield the depth formula.  For example, let $(R,\fm,k)$ be a discrete valuation ring and take $M = N = k$.  There is, however, another version of the depth formula, proposed by Auslander \cite{Au}.  Supposing that $\Tor^R_i(M,N) = 0$ for all $i\gg0$, set $q = \max\{j \mid \Tor_j^R(M,N) \ne 0\}$.  The proposed general depth formula is the following:
\begin{equation}\label{eq:gen-DF}
\depth M + \depth N = \depth R + \depth \Tor_q^R(M,N) - q
\end{equation}
\noindent Here $M$ and $N$ are understood to be finitely generated modules over a local ring $R$.  Auslander \cite[Theorem 1.2]{Au} verified this formula provided $\pd_R M < \infty$ and $\depth \Tor_q^R(M,N) \le 1$.  Suppose now that $M$ has finite complete intersection dimension \cite{AGP} (e.g., either $M$ has finite projective dimension or $R$ is a complete intersection).  Araya and Yoshino \cite{AY} showed  that the formula holds when $\depth \Tor_q^R(M,N) \le 1$; also, they showed that the formula can fail when $\depth \Tor_q^R(M,N) = 2$.  Choi and Iyengar \cite{CI} asked  whether there is always an index $j$ between $0$ and $q$ for which
\begin{equation}\label{eq:gen-gen-DF}
\depth M + \depth N = \depth R + \depth \Tor_j^R(M,N) - j\,,
\end{equation}
and showed that even \eqref{eq:gen-gen-DF} can fail.  In \cite{HW3}, Huneke and Wiegand invented a game whose goal is to find such an index $j$ and determined conditions under which one loses the game.

\medskip

We now return to our general theme:  Good depth properties of the tensor product force vanishing of Tor (and, by the depth formula, yield information on the depths of the modules involved.  We begin by revisiting the ring of Example~\ref{eg:3-dim-A1} (see \cite[Example 4.1]{HW1}).

\begin{example}\label{eg:tf-not-enuf}Let $R = k[\![x,y,u,v]\!]/(uv-xy)$, where $k$ is any field.  Put $I = (x,u)$ and $L = (y,u)$.  Then $I\otimes_RL$ is torsion-free, and $\Tor^R_i(I,L) \ne 0$ if and only if $i$ is an odd positive integer.  Moreover $I$ and $L$ are MCM $R$-modules.
\end{example}

Thus, to get vanishing of Tor over hypersurface domains, it's not enough to assume that the tensor product is torsion-free.  We want it to be \emph{reflexive} (see \cite[Theorem 2.7]{HW1}, stated as Theorem~\ref{thm:2nd-rigidity} below).

\begin{definition}\label{def:rank} Let $M$ be a finitely generated module over a Noetherian ring $R$, and let $X\subseteq \Spec(R)$.  We say that $M$ is \emph{free of constant rank on} $X$ provided  there is an integer $r$ (called the \emph{rank of} $M$ \emph{on} $X$ and denoted $\rank_X M$) such that $M_\fp \cong R_\fp^{(r)}$ for each $\fp\in X$.  We say that $M$ \emph{has rank} provided $M$ is free of constant rank on $\Ass R$, and in this case we let $\rank M
= \rank_{\Ass R} M$. \end{definition}

\begin{theorem}[Huneke and Wiegand, 1994]\label{thm:2nd-rigidity} Let $M$ and $N$ be nonzero finitely generated modules over a hypersurface $R$, and assume $M$ has rank.  If $M\otimes_RN$ is reflexive, then $M$ and $N$ are Tor-independent.  Moreover, $M$ is torsion-free, and $N$ is reflexive.
\end{theorem}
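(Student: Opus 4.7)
I would pass to the completion (harmless, since Tor, tensor products, rank, and reflexivity are preserved) and write $R = S/(f)$ with $(S,\fn)$ a complete regular local ring and $f$ a non-zerodivisor in $\fn$. Lemma~\ref{lem:tf-reduc} then gives $M\otimes N \cong \tf{}M \otimes \tf{}N$ reflexive, and Nakayama forces $M\otimes N \ne 0$, so the rank $r$ of $M$ is strictly positive (otherwise $M$ is torsion, forcing $M\otimes N$ to be both torsion and torsion-free, hence zero).

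The heart of the proof is to show $\Tor_i^R(M,N)=0$ for all $i\ge 1$. Murthy's theorem (Theorem~\ref{thm:murthy-c+1} with $c=1$) gives $2$-rigidity over the hypersurface $R$, so it suffices to exhibit two consecutive vanishing indices. My plan is to prove $\Tor_1^R(M,N)=0$ first and then extract $\Tor_2^R(M,N)=0$ from the change-of-rings sequence \eqref{eq:les-cor}, using that $\Tor_i^S$ vanishes for $i>\dim S$ together with Corollary~\ref{cor:ram-rigid}. For the first vanishing I would argue by contradiction: if $\Tor_1^R(M,N)\ne 0$, pick $\fp$ minimal in $\Supp\Tor_1^R(M,N)$. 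The rank hypothesis forces vanishing of $\Tor$ at the associated primes of $R$, so $\height\fp\ge 1$, and $\Tor_1^R(M,N)_\fp$ has finite length over the hypersurface $R_\fp$. For $\height\fp\ge 2$ I would aim to apply Theorem~\ref{thm:first-rigidity} at $R_\fp$; the dimension inequality $\dim M_\fp+\dim N_\fp\le\dim R_\fp$ and the finite-length hypothesis on $M_\fp\otimes N_\fp$ should be forced by the $(S_2)$ condition on the reflexive $M\otimes N$ and the rank condition on $M$, possibly after cutting down modulo part of a system of parameters. The case $\height\fp=1$ requires a separate analysis via the change-of-rings sequence at $R_\fp = S_{\mathfrak{P}}/(f)$, using the regularity of the two-dimensional $S_{\mathfrak{P}}$.

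With $\Tor_i^R(M,N)=0$ for $i\ge 1$ in hand, the remaining conclusions follow cleanly. Tensoring $0\to\tp{}M\to M\to\tf{}M\to 0$ with $N$ and using $\Tor_1^R(\tf{}M,N)=0$, the map $\tp{}M\otimes N\to M\otimes N$ is injective; Lemma~\ref{lem:tf-reduc} says its image is zero, so $\tp{}M\otimes N=0$ and Nakayama yields $\tp{}M=0$. For reflexivity of $N$, I would invoke the depth formula (Theorem~\ref{thm:DF}) locally at each $\fp$: $\depth M_\fp+\depth N_\fp = \depth(M\otimes N)_\fp+\depth R_\fp$. The $(S_2)$ bound $\depth(M\otimes N)_\fp\ge\min(2,\depth R_\fp)$ coming from reflexivity, combined with $\depth M_\fp\le\depth R_\fp$, gives $\depth N_\fp\ge\min(2,\depth R_\fp)$, so $N$ satisfies $(S_2)$. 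Since $R$ is Gorenstein, a torsion-free $(S_2)$-module is reflexive, and $N$ is reflexive.

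The main obstacle is the localization analysis at a minimal prime of $\Supp\Tor_1^R(M,N)$: packaging the $(S_2)$ depth information coming from reflexivity of $M\otimes N$, together with the rank hypothesis on $M$, into a valid application of Theorem~\ref{thm:first-rigidity} at $R_\fp$, and handling the height-$1$ case where the tensor product is only forced to be torsion-free rather than of finite length. I expect this bookkeeping to be the technical core of the argument.
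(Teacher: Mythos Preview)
The paper itself does \emph{not} prove the Tor-independence assertion; it only sketches the ``Moreover'' clause, citing \cite[Theorem 2.7]{HW1} for the main statement. So your proposal is really two things: an attempted outline of the Huneke--Wiegand argument, and a proof of the ``Moreover'' part.

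\medskip

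\textbf{The ``Moreover'' part.} Your argument for reflexivity of $N$ via the localized depth formula is exactly what the paper does. Your argument that $M$ is torsion-free has a small slip: you invoke $\Tor_1^R(\tf{}M,N)=0$, but what you have established is $\Tor_i^R(M,N)=0$; the long exact sequence of $0\to\tp{}M\to M\to\tf{}M\to 0$ does not pass this vanishing to $\tf{}M$. The paper fixes this by reapplying the first part of the theorem to the pair $(\tf{}M,N)$: Lemma~\ref{lem:tf-reduc} gives $(\tf{}M)\otimes_RN\cong M\otimes_RN$ reflexive, and $\tf{}M$ still has rank, so $\Tor_i^R(\tf{}M,N)=0$ for all $i\ge1$. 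With that correction your argument matches the paper's.

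\medskip

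\textbf{The Tor-independence part.} Here there is a genuine gap, and you correctly flag it as the ``main obstacle,'' but the approach you outline is not the right one. Theorem~\ref{thm:first-rigidity} is a \emph{rigidity} statement: it propagates an already-known vanishing $\Tor_j=0$ forward, under the hypotheses that $M\otimes_RN$ has finite length and $\dim M+\dim N\le\dim R$. At your prime $\fp$ none of these is available: you have no vanishing index to feed in, and $(M\otimes_RN)_\fp$ is reflexive, hence of depth at least $\min(2,\height\fp)$, which is the opposite of finite length. ``Cutting down modulo part of a system of parameters'' does not repair this, since modding out a regular sequence on $M\otimes_RN$ changes both modules and destroys the control you have over $\Tor$. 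Your extraction of $\Tor_2^R=0$ from $\Tor_1^R=0$ via \eqref{eq:les-cor} also does not go through: the sequence gives $T_2^S\to T_2^R\to T_0^R$, and $T_1^R=0$ only makes $T_2^S\to T_2^R$ \emph{injective}, not zero, so you cannot embed $T_2^R$ in the torsion-free $T_0^R$ without first killing $T_2^S$.

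The actual proof in \cite{HW1} does not route through Theorem~\ref{thm:first-rigidity}. It proceeds by induction on $\dim R$, replacing $M$ by $\tf{}M$ via Lemma~\ref{lem:tf-reduc}, and then using a pushforward: embed the torsion-free rank-$r$ module $M$ in $R^{(r)}$ with torsion cokernel $C$, shift the problem to $C$, and exploit that $\Tor_1^R(C,N)\hookrightarrow M\otimes_RN$ is torsion inside a reflexive module. The inductive hypothesis on lower-dimensional localizations then forces the higher $\Tor$'s to have finite length, after which one can bring the change-of-rings sequence and $2$-rigidity to bear. The point is that reflexivity supplies the initial vanishing directly (through the torsion-into-torsion-free mechanism), rather than through an appeal to Theorem~\ref{thm:first-rigidity}.
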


We will sketch the proof of the ``Moreover'' part, since it was a bit overstated in \cite{HW1} (cf. the erratum in Math. Ann.  \textbf{338} (2007)).  It is important to realize that the depth formula has no content if one of the modules is zero (by convention, $\depth 0 = \infty$).  This seemingly trivial point becomes important if we happen to localize the vanishing of Tor at a prime outside the support of one of the modules.  Here we remind the reader of Serre's conditions.

\begin{definition}\label{def:Serre}  Let $M$ be a finitely generated module over a Noetherian ring $R$.  Then $M$ satisfies Serre's condition (S$_n$) provided
\begin{enumerate}[(i)]
\item $M_\fp$ is either MCM or $0$ if $\height \fp \le n$, and
\item $\depth M_\fp \ge n$ if $\height \fp >n$.
\end{enumerate}
\end{definition}

Over complete intersections (in fact, over rings that are Gorenstein in codimension one), a finitely generated module is torsion-free, respectively, reflexive if and only if it satisfies (S$_1$), respectively, (S$_2$).  (See, for example, \cite[Appendix I]{LW}.)

\begin{proof}[Proof of the ``Moreover'' part of Theorem~\ref{thm:2nd-rigidity}] (See also \cite[Remark 1.3]{CP}.)  Obviously $\rank M > 0$, else $M\otimes_RN$ would be a nonzero torsion module, contradicting reflexivity.  Thus the support of $M$ is the whole spectrum.  The depth formula \eqref{eq:DF}, suitably localized, then shows that $N$ satisfies (S$_2$) and hence is reflexive.  To show that $M$ is torsion-free, we note that $\tf{}M\otimes_RN$ is reflexive by Lemma~\ref{lem:tf-reduc}, and hence $\Tor_i^R(\tf{}M,N) = 0$ for all $i\ge0$ by the first part of the theorem.  We tensor the short exact sequence \eqref{eq:tor-tf} with $N$, getting an injection $\tp{}M\otimes_RN \hookrightarrow M\otimes_RN$.  It follows that $(\tp{}M)\otimes_RN = 0$, and hence $\tp{}M = 0$.
\end{proof}

A comparison of  the Auslander-Lichtenbaum theorem (Corollary~\ref{cor:Aus-van}) and Theorem~\ref{thm:2nd-rigidity} suggests the following (somewhat vague) conjecture:

\begin{conj}\label{conj:Sc+1} Let $M$ and $N$ be finitely generated modules over a complete intersection of codimension $c$.  Assume that $M$ and $N$ have ``good'' behavior at primes of low height.  If $M\otimes_RN$ satisfies Serre's condition (S$_{c+1}$), then $\Tor^R_i(M,N) = 0$ for all $i\ge 1$.
\end{conj}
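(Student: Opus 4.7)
The plan is to prove Conjecture~\ref{conj:Sc+1} by induction on the codimension $c$, with the base case $c=0$ furnished by the Auslander--Lichtenbaum Theorem (Corollary~\ref{cor:Aus-van}), since over a regular local ring satisfying (S$_1$) is equivalent to being torsion-free. For the inductive step I would pass to the completion and write $\widehat R = R'/(f)$, where $R'$ is a complete intersection of codimension $c-1$ inside a complete regular local ring $S$, and $f$ is a non-zerodivisor in the maximal ideal of $R'$. The change-of-rings long exact sequence \eqref{eq:les-cor} relates $T^R_i := \Tor^R_i(M,N)$ to $T^{R'}_i := \Tor^{R'}_i(M,N)$; the goal is to apply the inductive hypothesis on $R'$ to conclude $T^{R'}_i = 0$ for all $i \ge 1$, and then propagate the vanishing back through \eqref{eq:les-cor} to the $T^R_i$.

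I would formulate ``good behavior at primes of low height'' as requiring $M_\fp$ and $N_\fp$ to be free, or at least to be MCM and Tor-independent, at every prime $\fp$ of height at most $c$. This immediately forces $T^R_i$ to be supported only on primes of height greater than $c$ for every $i \ge 1$. Two consequences drive the induction: first, at primes of height exactly $c$ the tensor product $M\otimes_R N$ is free, so the Serre condition (S$_{c+1}$) exerts its real force at primes of height $>c$, where the depth formula (Theorem~\ref{thm:DF}) applied to $R_\fp$ (a complete intersection of codimension $<c$, so covered by induction) links the depth of $M\otimes_R N$ to depths of $M$ and $N$; second, combining this depth information with \eqref{eq:les-cor} and the $(c+1)$-rigidity of Theorem~\ref{thm:c+1-rigid}, a finite-length and Euler-characteristic argument in the spirit of Proposition~\ref{prop:chi-van-rigid} and Theorem~\ref{thm:first-rigidity} should push the vanishing of $T^{R'}_i$ down to the vanishing of $T^R_i$ for all $i \ge 1$.

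The main obstacle is the depth-translation step between $R$ and $R'$: because $M\otimes_R N$ is annihilated by $f$, its depth as an $R'$-module equals its depth as an $R$-module, while $R'$-heights of supporting primes exceed the corresponding $R$-heights by one, so (S$_{c+1}$) over $R$ cannot be upgraded literally to (S$_{c+1}$) over $R'$. The low-height hypothesis must therefore play a double role: securing Tor-independence at low-height primes, and also compensating for the one-unit deficit in the Serre condition over $R'$ at every height $\le c$. Pinning down the sharp form of this hypothesis --- probably something stronger than imposing (S$_{c+1}$) on $M$ and $N$ individually but weaker than requiring full freeness at all primes of height $\le c$, and consistent with the pattern set by Corollary~\ref{cor:Aus-van} and Theorem~\ref{thm:2nd-rigidity} at $c=0,1$ --- is the most delicate part of any proof, and is in fact the source of the word ``vague'' attached to the conjecture.
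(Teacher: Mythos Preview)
The statement you are addressing is a \emph{conjecture}, explicitly described in the paper as ``somewhat vague,'' and the paper offers no proof of it.  What follows the conjecture in the paper is a record of partial progress: Theorem~\ref{thm:HJW-codim2} for $c=2$, Dao's Theorem~\ref{thm:Long-CI} for arbitrary $c$ under freeness of $M$ in codimension~$c$ together with (S$_c$) on both $M$ and $N$, and Theorem~\ref{thm:Olgur-rigidity} under a complexity restriction.  There is therefore no ``paper's own proof'' against which to measure your proposal.

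Your write-up is not a proof either, and to your credit the last paragraph says so plainly.  The inductive framework you sketch --- induction on $c$ via the change-of-rings sequence \eqref{eq:les-cor} --- is a natural first thought, but it runs into a difficulty more serious than the one-unit Serre-condition deficit you flag.  Viewed as $R'$-modules, $M$ and $N$ are annihilated by the non-zerodivisor $f$, hence are torsion; they can never be free at \emph{any} prime of $R'$ containing $f$, so the ``good behavior at low height'' hypothesis fails wholesale over $R'$, not just by one unit.  The inductive hypothesis as you have stated it therefore cannot be invoked directly for $M$ and $N$ over $R'$.  The known partial results (Theorems~\ref{thm:HJW-codim2}, \ref{thm:Long-CI}, \ref{thm:Olgur-rigidity}) do not proceed by this naive induction; they instead combine $c$-rigidity with depth-counting or with the $\eta$-pairing machinery of Section~\ref{sec:eta}.

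In summary: you have correctly located the open problem and one of its obstructions, but the proposal remains, as you yourself acknowledge, a heuristic outline rather than a proof --- which is exactly the status the paper assigns to Conjecture~\ref{conj:Sc+1}.
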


This conjecture was addressed in \cite{HJW}, with some success in codimensions $2$ and $3$.  Here is the result for $c=2$ (\cite[Theorem 2.4]{HJW}):

\begin{theorem}[Huneke, Jorgensen and Wiegand, 2001]\label{thm:HJW-codim2} Let $M$ and $N$ be finitely generated modules over a complete intersection $(R,\fm)$ of codimension $2$.  Assume:
\begin{enumerate}[(i)]
\item $M$ has rank.
\item $M$ is free of constant rank on $\{\fp\in \Spec(R) \mid \height \fp \le 1\}$.
\item $M$ and $N$ satisfy (S$_2$).
\item $M\otimes_RN$ satisfies (S$_3$).
\end{enumerate}
Then $M$ and $N$ are Tor-independent.
\end{theorem}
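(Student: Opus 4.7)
My approach is induction on $d = \dim R$. After passing to the $\fm$-adic completion, I may assume $\widehat R = S/(f_1,f_2)$ with $(S,\fn)$ a complete regular local ring and $f_1,f_2 \in \fn^2$ a regular sequence; if $(f_1,f_2) \not\subseteq \fn^2$, then $R$ is regular or a hypersurface and Corollary \ref{cor:Aus-van} or Theorem \ref{thm:2nd-rigidity} applies directly.

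The first step is a localization-to-finite-length reduction. For each prime $\fp \ne \fm$ the four hypotheses descend to $R_\fp, M_\fp, N_\fp$, since rank, freeness at low-height primes, and Serre's conditions are all stable under localization. If $\height \fp \le 1$, then $M_\fp$ is $R_\fp$-free by (ii) and Tor vanishes trivially. If $\height \fp \ge 2$ and $\codim R_\fp \le 1$, the ring $R_\fp$ is regular or a hypersurface and $(M \otimes_R N)_\fp$ satisfies (S$_2$), hence is reflexive over $R_\fp$; Corollary \ref{cor:Aus-van} or Theorem \ref{thm:2nd-rigidity} then gives $(T_i)_\fp = 0$ for all $i \ge 1$, where $T_i := \Tor_i^R(M,N)$. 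If $\codim R_\fp = 2$ and $\height \fp < d$, the inductive hypothesis applies. Consequently each $T_i$ with $i \ge 1$ is supported only at $\fm$, hence of finite length.

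The base cases $d \le 1$ are immediate: (ii) then forces $M$ to be locally free at every prime, hence $R$-free, and Tor-independence is trivial. For $d = 2$, (S$_3$) collapses to (S$_2$), so $M \otimes_R N$ is maximal Cohen--Macaulay, as are $M$ and $N$ themselves by (iii); I would first derive the vanishing of $\Tor_1^R(M,N)$ and $\Tor_2^R(M,N)$ via a length and Euler-characteristic bookkeeping in the long exact sequence \eqref{eq:les-cor} for the change of rings $S \to R$ (using Proposition \ref{prop:chi-van-rigid} and Theorem \ref{thm:chi-quartz}), and then invoke Theorem \ref{thm:Ce-CM} to propagate the vanishing to all $i \ge 1$. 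For $d \ge 3$ I would mount the analogous long-exact-sequence argument, iterating through the tower $S \to S/(f_1) \to R$ and exploiting the extra unit of depth supplied by (S$_3$) on $M \otimes_R N$ together with Theorem \ref{thm:chi-quartz} on non-negativity and vanishing of higher Euler characteristics over the unramified regular ring $S$, to force every finite-length $T_i$ to be zero.

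The main obstacle I anticipate is precisely this last step --- translating ``finite length'' into ``zero''. The hypersurface analogue (Theorem \ref{thm:2nd-rigidity}) only required reflexivity, i.e. (S$_2$), to close the depth-formula loop; in codimension two one must extract one additional unit of depth from (S$_3$) and balance it carefully against the two-step chain $S \to S/(f_1) \to R$, keeping track of how lengths propagate through the iterated long exact sequence \eqref{eq:les-cor}. It is this delicate length accounting, rather than the localization or the base cases, that is the real crux of the argument.
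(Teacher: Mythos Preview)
The survey does not prove Theorem~\ref{thm:HJW-codim2}; it is quoted from \cite[Theorem~2.4]{HJW} without argument, so there is no in-paper proof to compare against. I will assess your plan on its own terms.

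Your induction-and-localization scaffold is reasonable and does reduce the problem to showing that the finite-length modules $T_i=\Tor_i^R(M,N)$, $i\ge1$, are zero. But you have correctly flagged the remaining step as the crux, and your sketch for it does not work as written. First, the tools you reach for---Theorem~\ref{thm:Ce-CM} and part~\eqref{item:unram-chi-van} of Theorem~\ref{thm:chi-quartz}---both require the ambient regular ring $S$ to be \emph{unramified}, a hypothesis absent from Theorem~\ref{thm:HJW-codim2}; you may not invoke them as stated. Second, even granting unramifiedness, ``length bookkeeping in the iterated long exact sequence'' is not yet an argument: in codimension two there is no eventual periodicity of $\Tor^R$ to make alternating sums telescope (contrast Proposition~\ref{prop:chi-van-rigid}, which is a hypersurface statement), and you have given no concrete mechanism by which the one extra unit of depth supplied by (S$_3$) on $M\otimes_RN$ is actually fed into the sequence~\eqref{eq:les-cor}. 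The original proof in \cite{HJW} does not proceed by length-counting alone; the key additional device is a pushforward construction on $M$---enabled precisely by hypotheses (ii) and (iii)---producing short exact sequences $0\to M\to F\to M'\to 0$ with $F$ free, so that tensoring with $N$ shifts the index on Tor while the Serre conditions force the resulting finite-length modules to embed into modules of positive depth, hence to vanish. That construction, rather than Euler characteristics, is the idea your outline is missing.
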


In \cite{Da2} Dao studied the unramified case, and proved the following vanishing theorem for arbitrary codimensions (\cite[Theorem 7.6]{Da2}):

\begin{theorem}[Dao, 2007 arXiv]\label{thm:Long-CI} Let $(R,\fm)$ be a local ring whose completion $\widehat R$ is a complete intersection of relative codimension $c$ in an unramified regular local ring.  Let $M$ and $N$ be finitely generated $R$-modules, and assume:
\begin{enumerate}[\rm(i)]
\item $M_\fp$ is $R_\fp$-free for each prime ideal $\fp$ of height at most $c$.
\item $M$ and $N$ satisfy (S$_c$).
\item $M\otimes_RN$ satisfies (S$_{c+1}$).
\end{enumerate}
Then $M$ and $N$ are Tor-independent.
\end{theorem}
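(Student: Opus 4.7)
The plan is to first reduce to the complete case by passing to $\widehat R$. Both the hypotheses (local freeness at low-height primes, the Serre conditions, the complete-intersection and unramified structure) and the conclusion transfer along the faithfully flat map $R \to \widehat R$, so we may assume $R = S/(\underline f)$ where $(S,\fn)$ is a complete unramified regular local ring and $\underline f = f_1,\dots,f_c$ is a regular sequence in $\fn$.

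Next, I would show by induction on $d := \dim R$ that each $\Tor^R_i(M,N)$ has finite length. For any non-maximal prime $\fp$ of $R$, let $\fq = \fp \cap S$; then $R_\fp = S_\fq/(\underline f)$ is a complete intersection of relative codimension at most $c$ in the unramified regular local ring $S_\fq$, and hypotheses (i)--(iii) localize to the corresponding hypotheses for $M_\fp$ and $N_\fp$ over $R_\fp$ (primes of height $\le c$ in $R_\fp$ pull back to primes of height $\le c$ in $R$ contained in $\fp$, while the Serre conditions localize in the usual way). Since $\dim R_\fp < d$, the inductive hypothesis gives Tor-independence of $M_\fp$ and $N_\fp$ over $R_\fp$, with the regular case $c = 0$ handled by Corollary~\ref{cor:Aus-van}. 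Hence $\Tor^R_i(M,N)$ has finite length for every $i \geq 1$.

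The main step is to conclude vanishing from finite length and the depth conditions. The natural tool is a codimension-$c$ generalization of the non-negative higher Euler characteristics of Theorem~\ref{thm:chi-quartz}\eqref{item:unram-chi-van}, namely an eta-type pairing
\begin{equation*}
\eta^R_c(M,N) \;=\; \lim_{n \to \infty} \frac{1}{n^c} \sum_{i=1}^{n} (-1)^i \len \Tor^R_i(M,N),
\end{equation*}
which is well-defined and non-negative for CIs of codimension $c$ over unramified regular local rings once the Tors have finite length. The strategy splits in two: first, show that hypotheses (ii) and (iii) force $\eta^R_c(M,N) = 0$, in parallel with the proof of the vanishing half of Theorem~\ref{thm:chi-quartz}\eqref{item:chi-van}; second, combine the vanishing and non-negativity of $\eta^R_c$ to conclude that $\Tor^R_i(M,N) = 0$ for all $i \gg 0$, and then invoke the $(c{+}1)$-rigidity of Theorem~\ref{thm:c+1-rigid} to push the vanishing back to all $i \geq 1$.

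The principal obstacle is the first half of the main step: verifying $\eta^R_c(M,N) = 0$. The unramified hypothesis is indispensable here, since it is what underpins both the existence and the non-negativity of $\eta$, just as in the regular setting of Theorem~\ref{thm:chi-quartz}. Condition (i), local freeness of $M$ at primes of height $\leq c$, ensures that the supports of the $\Tor^R_i(M,N)$ have codimension at least $c+1$, so that an asymptotic dimension count has room to work; condition (iii), that $M \otimes_R N$ satisfies (S$_{c+1}$), supplies exactly the depth needed to convert that dimension count into the vanishing of $\eta^R_c$. The technical heart of the proof therefore consists in assembling a codimension-$c$ analogue of the Serre--Roberts--Gillet--Soul\'e intersection-multiplicity vanishing that is tailored to the CI setting and uses the Serre conditions as a substitute for a strict dimension inequality.
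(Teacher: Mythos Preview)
Your reduction to the complete case and the induction on $\dim R$ establishing that each $\Tor_i^R(M,N)$ has finite length are sound, and you are correct that the $\eta$-pairing of Section~\ref{sec:eta} is Dao's central tool. But the second half of your main step contains two genuine errors.

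First, $\eta_c^R$ is not non-negative: for $c=1$ one has $2\eta_1^R = \theta^R$, and Example~\ref{eg:node} already gives $\theta^R(M,M) = -1$. More to the point, $\eta_c^R(M,N) = 0$ does \emph{not} force $\Tor_i^R(M,N) = 0$ for $i \gg 0$. Over $R = k[\![x,y]\!]/(xy)$ with $L = R/(x) \oplus R/(y)$, biadditivity gives $\theta^R(L,L) = -1 + 1 + 1 - 1 = 0$, yet $\Tor_i^R(L,L) \ne 0$ for every $i \ge 0$. What $\eta_c = 0$ actually delivers, as recorded in Section~\ref{sec:eta}, is that the pair $(M,N)$ is $c$-\emph{rigid}; there is no analogue for $\eta_c$ of the equality clause in Theorem~\ref{thm:chi-quartz}\eqref{item:unram-chi-van}.

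Second, rigidity pushes vanishing \emph{forward}, not backward: the vanishing of $\Tor_j,\dots,\Tor_{j+c}$ forces $\Tor_i = 0$ for $i \ge j$, never for $i < j$. So even if eventual vanishing were in hand, Theorem~\ref{thm:c+1-rigid} could not be used to reach $\Tor_1$.

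The argument as Dao runs it uses the implication ``$\eta_c = 0 \Rightarrow c$-rigid'' and then, within the same induction on $\dim R$, combines $c$-rigidity with depth arguments drawn from hypotheses (ii) and (iii), in the spirit of the proofs of Theorems~\ref{thm:2nd-rigidity} and~\ref{thm:HJW}, to produce $c$ consecutive zeros \emph{at low indices} that trigger rigidity from the start. The route you outline---eventual vanishing followed by back-propagation via Murthy---is simply not available.
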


Theorem \ref{thm:Long-CI} justifies Conjecture \ref{conj:Sc+1} for some noteworthy cases. However, if $\dim R \le c$, Dao's theorem has little content, since condition (i) then implies that $M$ is free.  The following result \cite[Theorem 3.4]{Ce} is of interest even when $\dim R = c$:

\begin{theorem}[Celikbas, 2011]\label{thm:Olgur-rigidity} Let $(R,\fm)$ be a local ring whose completion $\widehat R$ is isomorphic to $S/(\underline f)$, where $(S,\fn)$ is an  unramified regular local ring and $\underline f = f_1,\dots,f_c$ is a regular sequence in $\fn^2$.  Let $M$ and $N$ be finitely generated $R$-modules, and assume:
\begin{enumerate}[\rm(i)]
\item $(\Tor^R_i(M,N))_\fq = 0$ for all $i\ge 1$ and for all prime ideals $\fq$ of height at most $c-1$ (e.g., $M_\fq$ is a free $R_{\fq}$-module for each such prime $\fq$.)
\item $M$ and $N$ satisfy (S$_{c-1}$).
\item $M\otimes_RN$ satisfies (S$_c$).
\end{enumerate}
If $M$ or $N$ has non-maximal complexity (i.e., $\min\{\cx_RM,\cx_RN\} < c$), then $M$ and $N$ are Tor-independent.
\end{theorem}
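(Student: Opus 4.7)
The plan is to proceed by induction on the codimension $c$ after harmlessly passing to the completion, so that $R = S/(\underline{f})$ with $(S,\fn)$ unramified regular local and $\underline{f} = f_1,\dots,f_c$ an $S$-regular sequence in $\fn^2$. Without loss of generality, assume $\cx_R M \le c-1$. The base case $c=1$ is a hypersurface, in which the complexity hypothesis forces $\pd_R M < \infty$; the required Tor vanishing then follows from Corollary~\ref{cor:Aus-van} applied after lifting to $S$ through the long exact sequence \eqref{eq:les-cor}.

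For the inductive step, the aim is a complexity-based reduction of codimension. Because $\cx_R M < c$, a standard generic change in a minimal generating set of $(\underline{f})$ produces a codimension-$(c-1)$ complete intersection $R' := S/(g_2,\dots,g_c)$ with $R = R'/(\bar g_1)$, $\bar g_1$ a non-zerodivisor on $R'$, and $\cx_{R'} M \le c-2$. Since $\bar g_1$ annihilates both $M$ and $N$, the natural map $M \otimes_{R'} N \to M \otimes_R N$ is an isomorphism, so conditions (ii) and (iii) descend to $R'$ as $(S_{c-1})$ and $(S_c)$ respectively, both strictly stronger than what the codimension-$(c-1)$ version of the theorem requires. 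Using the correspondence between primes of $R$ of height $h$ and primes of $R'$ of height $h+1$ containing $\bar g_1$, combined with a careful analysis of \eqref{eq:les-cor} for the change of rings $R' \to R$, one then deduces vanishing of $\Tor_i^{R'}(M,N)_{\fp'}$ at all primes of $R'$ of height at most $c-2$; this is condition (i) over $R'$. The induction hypothesis applied to $R'$ yields Tor-independence of $M$ and $N$ over $R'$, and a final invocation of \eqref{eq:les-cor} transfers the vanishing back to $R$.

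The main obstacle is the descent of condition (i) through \eqref{eq:les-cor}. Although vanishing of $\Tor_i^R(M,N)_\fp$ for all $i \ge 1$ implies vanishing of $\Tor_j^{R'}(M,N)_{\fp'}$ for $j$ sufficiently large, the low-index Tors, especially $\Tor_1^{R'}$, require separate treatment: the sequence permits $\Tor_1^{R'}(M,N)$ to appear as a quotient of $M \otimes_R N$, which is typically nonzero. One must therefore combine the Serre condition on $M \otimes_R N$ with careful depth bookkeeping (via the depth formula, Theorem~\ref{thm:DF}) to rule out this potential obstruction at the relevant primes of small height. A further subtlety is that Dao's Theorem~\ref{thm:Long-CI}, while providing the analogous vanishing statement, requires local freeness of $M$ at primes of small height rather than just Tor vanishing, so it cannot be invoked directly; the complexity-based inductive reduction is precisely the mechanism that circumvents this stronger hypothesis.
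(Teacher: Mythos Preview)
The paper is a survey and does not supply a proof of this theorem; it merely cites \cite[Theorem 3.4]{Ce}. So there is nothing to compare against directly, but your proposed argument has a fatal structural flaw that is worth naming.

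You want to apply the inductive hypothesis to $M$ and $N$ regarded as $R'$-modules, where $R=R'/(\bar g_1)$. But $M$ and $N$ are then annihilated by the non-zerodivisor $\bar g_1$, so they are torsion $R'$-modules. Two things go wrong. First, the Serre conditions of Definition~\ref{def:Serre} do \emph{not} descend: at any prime $\fp'\supseteq(\bar g_1)$ with $M_{\fp'}\ne0$ one has $\depth_{R'_{\fp'}}M_{\fp'}=\depth_{R_\fp}M_\fp\le\dim R_\fp=\dim R'_{\fp'}-1$, so $M_{\fp'}$ is never MCM over $R'_{\fp'}$; thus $M$ fails $(S_n)$ over $R'$ for every $n\ge1$. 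Second, and more decisively, the conclusion of the inductive hypothesis is impossible: for nonzero $R$-modules $M,N$ one always has $\Tor^{R'}_1(M,N)\ne0$. Indeed, if all $\Tor^{R'}_i(M,N)$ with $i\ge1$ vanished, then reading \eqref{eq:les-cor} with $S=R'$ gives isomorphisms
\[
\Tor^R_{2k}(M,N)\;\cong\;M\otimes_RN\ne0\qquad(k\ge1),
\]
so your ``final invocation of \eqref{eq:les-cor}'' produces exactly the opposite of Tor-independence over $R$. Equivalently, Tor-independence over $R$ forces $\Tor^{R'}_1(M,N)\cong M\otimes_RN$, so one can never have Tor-independence simultaneously over $R$ and over $R'$ for nonzero modules. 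The induction on codimension, as you have set it up, cannot close.

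The proof in \cite{Ce} proceeds instead by induction on $\dim R$. Localizing at non-maximal primes and invoking the inductive hypothesis shows that $\Tor^R_i(M,N)$ has finite length for all $i\ge1$. One then uses a pushforward construction to replace $M$ by modules of successively higher depth (with free cokernels, preserving the complexity bound and the finite-length-of-Tor condition) until Jorgensen's complexity-based rigidity, Theorem~\ref{thm:Jo-cx}, applies: since $r=\min\{\cx_RM,\cx_RN\}\le c-1$, one obtains $c$-rigidity, and the Serre hypothesis $(S_c)$ on the tensor product supplies the $c$ consecutive vanishing Tors needed to trigger it.
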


Example~\ref{eg:node}, with $c=1$, shows that one cannot delete the assumption concerning complexity.

In order to prove Theorem~\ref{thm:Long-CI}, Dao introduced and developed a pairing $\eta^R(-,-)$ defined on pairs of finitely generated modules over a Noetherian ring, particularly over complete intersections.  This pairing, which we shall consider in detail in the next section, generalizes the $\theta$-pairing for hypersurfaces, introduced by Hochster \cite{Ho}  in 1980.

\section{Thetas, and etas, and {T}ors! Oh, my!}\label{sec:eta}

\subsection*{The theta-pairing} Let $(R,\fm)$ be a hypersurface, and let $M$ and $N$ be finitely generated $R$-modules. A consequence of Eisenbud's theory of matrix factorizations \cite{Ei} is that the minimal free resolution of $M$ is eventually periodic, with period at most two \cite[Theorem 6.1]{Ei}.  It follows that $\Tor_i^R(M,N) \cong \Tor_{i+2}^R(M,N)$ for all $i\gg 0$ (see also the long exact sequence of \ref{eq:les-cor}). Now assume that
 $\Tor_{i}^R(M,N)$ has finite length for all $i\gg 0$. (This condition holds for all $M$, $N$ if $(R,\fm)$ is an \emph{isolated singularity}, that is $R_\fp$ is a regular local ring for each prime ideal $\fp \ne \fm$.)  Under these circumstances, Hochster's $\theta$-pairing is defined as
\begin{equation}\notag{}
\theta^{R}(M,N)=\len(\Tor^{R}_{2i}(M,N))-\len(\Tor^{R}_{2i+1}(M,N))
\text{ for  all $i\gg 0$.}
\end{equation}
Hochster \cite{Ho} introduced this pairing in 1981  in order to study the Direct Summand Conjecture.  In \cite[Theorem 2.8]{Da1}, Dao discovered an important connection between the $\theta$-pairing and  rigidity:

\begin{theorem}[Dao, 2006] \label{thm:theta} Let $(R,\fm)$ be a local ring whose completion $\widehat R$ is hypersurface in an unramified regular local ring, and let $M$ and $N$ be finitely generated $R$-modules. Assume  $\Tor_{i}^R(M,N)$ has finite length for all $i\gg 0$. If $\theta^R(M,N)=0$, then $(M,N)$ is rigid.
\end{theorem}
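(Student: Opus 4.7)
The plan is to combine the change-of-rings long exact sequence~\eqref{eq:les-cor} with the non-negativity of higher Euler characteristics on $S$ (Theorem~\ref{thm:chi-quartz}\eqref{item:unram-chi-van}) and Lichtenbaum's rigidity theorem~\ref{thm:Li-i>>0}. First I pass to the completion and reduce to $R=S/(f)$ with $S$ unramified regular local and $f\in\fn$ a non-zerodivisor; since lengths, $\Tor$, and $\theta^R$ are unchanged, the hypotheses pass through. Because $\Tor^R_i(M,N)$ has finite length for $i\gg 0$, these Tors have support in $\{\fm\}$, so they vanish at every prime $\fp\neq\fm$ for $i\gg 0$. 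Each $R_\fp=S_{\wt\fp}/(f)$ remains a hypersurface in the unramified regular local ring $S_{\wt\fp}$, so Theorem~\ref{thm:Li-i>>0} applies and $(M_\fp,N_\fp)$ is rigid over $R_\fp$; combined with the hypothesis $\Tor^R_j(M,N)=0$ (which localizes), this forces $\Tor^R_i(M,N)$ to have finite length for every $i\ge j$, and hence, via~\eqref{eq:les-cor}, $\Tor^S_i(M,N)$ has finite length for every $i\ge j+1$.

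The crux of the argument is to show $\chi^S_{j+1}(M,N)=0$, from which Theorem~\ref{thm:chi-quartz}\eqref{item:unram-chi-van} will then deliver $\Tor^S_{j+1}(M,N)=0$. For this I would take alternating sums of lengths along the LES~\eqref{eq:les-cor}. Using Eisenbud's matrix-factorization periodicity $\Tor^R_i(M,N)\cong\Tor^R_{i+2}(M,N)$ for $i\gg 0$, together with the hypothesis $\theta^R(M,N)=0$ (which gives $\len\Tor^R_{2i}(M,N)=\len\Tor^R_{2i+1}(M,N)$ in the stable range), the $\Tor^R$ contributions to the alternating sum should pair off and telescope, leaving only a boundary contribution at index $j$ that vanishes because $\Tor^R_j(M,N)=0$. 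Making this telescoping precise --- carefully tracking each $\Tor^R_i$ and $\Tor^S_i$ appearance in~\eqref{eq:les-cor} to verify that the cancellations really do collapse the sum to zero --- is the main technical obstacle of the proof.

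Once $\chi^S_{j+1}=0$ yields $\Tor^S_{j+1}(M,N)=0$, the identity $\chi^S_{k+1}=\len\Tor^S_k-\chi^S_k$ together with induction on $k$ gives $\chi^S_k=\Tor^S_k(M,N)=0$ for every $k\ge j+1$. The LES~\eqref{eq:les-cor} then collapses to isomorphisms $\Tor^R_i(M,N)\cong\Tor^R_{i-2}(M,N)$ for every $i\ge j+2$, so $\Tor^R_j(M,N)=0$ propagates to $\Tor^R_{j+2k}(M,N)=0$ for all $k\ge 0$, while the odd-shifted Tors $\Tor^R_{j+1+2k}(M,N)$ are all isomorphic to $\Tor^R_{j+1}(M,N)$. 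The stable equality $\len\Tor^R_{2i}=\len\Tor^R_{2i+1}$ coming from $\theta^R(M,N)=0$ then forces $\Tor^R_{j+1}(M,N)=0$ as well, yielding $\Tor^R_i(M,N)=0$ for every $i\ge j$, as desired.
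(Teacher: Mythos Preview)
The paper does not give its own proof of Theorem~\ref{thm:theta}; it is quoted from Dao's paper~\cite{Da1}. Your overall strategy --- localize to force $\Tor^R_i(M,N)$ to have finite length for every $i\ge j$, push this through~\eqref{eq:les-cor} to get finite length of $\Tor^S_i(M,N)$ for $i\ge j+1$, take alternating sums of lengths along~\eqref{eq:les-cor}, and combine $\theta^R=0$ with non-negativity of the partial Euler characteristics over $S$ --- is the correct one and is essentially how Dao argues.

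There is, however, a real gap. You invoke Theorem~\ref{thm:chi-quartz}\eqref{item:unram-chi-van} to obtain $\chi^S_{j+1}(M,N)\ge 0$ and, from equality, $\Tor^S_{j+1}(M,N)=0$. But Theorem~\ref{thm:chi-quartz} is stated under the blanket hypothesis that $M\otimes_SN$ has finite length, and nothing in your setup gives this: you have only arranged that $\Tor^S_i(M,N)$ has finite length for $i\ge j+1$. If one carries out your telescoping carefully (truncating~\eqref{eq:les-cor} at the top using $\Tor^S_{d+2}=0$ and at the bottom using $\Tor^R_j=0$), the alternating sum yields
\[
\chi^S_{j+1}(M,N)\;+\;\len\bigl(\im(\Tor^R_{j+1}\to\Tor^R_{j-1})\bigr)\;=\;\pm\bigl(\len\Tor^R_{d+1}-\len\Tor^R_{d+2}\bigr)\;=\;\pm\,\theta^R(M,N)\;=\;0,
\]
so one gets $\chi^S_{j+1}\le 0$ for free. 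It is precisely the reverse inequality $\chi^S_{j+1}\ge 0$ that closes the argument, and the version of Theorem~\ref{thm:chi-quartz}\eqref{item:unram-chi-van} available here does not supply it.

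The fix is to use the sharper form of the Lichtenbaum--Hochster inequality: over an unramified regular local ring $S$, if $\Tor^S_i(M,N)$ has finite length for all $i\ge j$ (with $j\ge 1$), then $\chi^S_j(M,N)\ge 0$, with equality if and only if $\Tor^S_j(M,N)=0$. This is true (see~\cite{Li},~\cite{Ho1984}) but is not what the survey states, so you should cite it explicitly rather than Theorem~\ref{thm:chi-quartz}. Once $\Tor^S_{j+1}(M,N)=0$ is in hand, rigidity over the regular ring $S$ (Corollary~\ref{cor:ram-rigid}) kills $\Tor^S_i(M,N)$ for all $i\ge j+1$ in one stroke --- your inductive step via $\chi^S_{k+1}=\len\Tor^S_k-\chi^S_k$ is unnecessary --- and the remainder of your argument goes through as written.
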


The following example \cite[Example 6.7]{Da1} shows that the converse of Theorem \ref{thm:theta} is not true in general.

\begin{example}[Dao, 2006] Let $R=\CC[\![x,y,u,v]\!]/(xu-yv)$. Let $\fp=(x,y)$, $\fq=(x,v)$, $N=R/\fp \oplus R/\fq$, and $M=R/\fp \oplus N$. Then $M$ is a rigid module, but $\theta^{R}(M,R/\fq)=\theta^{R}(R/\fp,R/\fq)=1$.
\end{example}

Because of Theorem~\ref{thm:theta}, it is important to find classes of rings where the $\theta$-pairing vanishes.   Here are two such classes, from \cite[Theorem 4.1]{Da1}:

\begin{theorem} [Dao, 2006] \label{thm:GR}  Let $(R,\fm)$ be a local ring whose completion $\widehat R$ is hypersurface in an unramified regular local ring, and let $M$ and $N$ be finitely generated $R$-modules. Assume one of the following conditions holds:
\begin{enumerate}[\rm(i)]
\item $R$ is a two-dimensional normal ring.
\item $R$ is a four-dimensional equicharacteristic isolated singularity.
\end{enumerate}
Then $\theta^{R}(M,N)=0$ and hence the pair $(M,N)$ is rigid.
\end{theorem}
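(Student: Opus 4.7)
The rigidity conclusion follows immediately from Theorem~\ref{thm:theta} once $\theta^R(M,N)=0$ is established, so the substantive content is the vanishing statement. As a standing reduction, one may pass to $\widehat R$ (lengths are preserved and the hypotheses pass to the completion), so assume $R=S/(f)$ with $(S,\fn)$ unramified regular. Because the minimal free resolution of every finitely generated $R$-module is eventually $2$-periodic, $\theta^R$ is invariant under taking syzygies, so we may further assume $M$ and $N$ are maximal Cohen--Macaulay.

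For (i), the plan is to exhibit $\theta^R$ as a bilinear form on a group small enough that it must vanish. Since $\theta^R$ is additive on short exact sequences of MCM modules (when all three terms are MCM) and is zero on free summands, it descends to a $\ZZ$-bilinear pairing on the reduced Grothendieck group $\overline{G_0}(R):=G_0(R)/\ZZ[R]$. Over a two-dimensional normal domain every MCM module is reflexive and hence determined, modulo free summands, by its first Chern class, giving an isomorphism $\overline{G_0}(R)\simeq \Cl(R)$ and a pairing $\langle -,-\rangle\colon \Cl(R)\times \Cl(R)\to \ZZ$ with $\theta^R(M,N)=\langle c_1(M),c_1(N)\rangle$. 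The crux is then to show this pairing is identically zero for complete $2$-dimensional normal hypersurfaces: reduce to $\theta^R(R/\fp,R/\fq)$ for height-$1$ primes, write each $R/\fp$ as the cokernel of an explicit matrix factorization $(\varphi_\fp,\psi_\fp)$ of $f$ over $S$, and compute the alternating sum of Tor lengths using these matrix factorizations together with Serre intersection multiplicities on the $3$-dimensional regular local ring $S$; the relation $\varphi_\fp\psi_\fp=fI$ produces the required cancellation.

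For (ii), the plan is instead to convert $\theta^R$ into an Euler characteristic over $S$ and invoke vanishing of intersection multiplicities. Running the long exact sequence~\eqref{eq:les-cor} and using that $T_i^S=0$ for $i>\dim S$, one telescopes (in the tail) to obtain an identity of the form $\theta^R(M,N)=\pm\,\chi^S(M',N')$ for suitable high syzygies $M',N'$. The isolated-singularity hypothesis guarantees that the relevant Tor groups over $R$ have finite length, hence so do those over $S$, so $\chi^S(M',N')$ is defined. Since $\dim S=5$ and $S$ is equicharacteristic regular, the Gillet--Soul\'e/Roberts vanishing statement Theorem~\ref{thm:chi-quartz}\eqref{item:chi-van} yields $\chi^S(M',N')=0$ provided $\dim M'+\dim N'<5$; this inequality can be arranged by replacing $M'$ and $N'$ with sufficiently deep syzygies (or by cutting down one factor by a generic regular element of $\fn$), since isolated singularity makes the supports shrink in a controlled way.

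The main obstacle in each case is the final cancellation. In (i) it is the verification that the induced pairing on $\Cl(R)$ actually vanishes, since general $2$-dimensional normal domains do not enjoy this property; the hypersurface structure must enter decisively, either through the explicit matrix factorizations or through intersection multiplicities on the regular ring $S$. In (ii) it is the bookkeeping needed to cut $M'$ and $N'$ down to the dimension range in which Theorem~\ref{thm:chi-quartz}\eqref{item:chi-van} applies, while preserving both the finite-length condition on the Tor groups (so that $\theta^R$ and $\chi^S$ remain defined) and the sign/telescoping identity relating them.
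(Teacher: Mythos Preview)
The paper is a survey and does not supply a proof of this theorem; it merely quotes the statement from Dao \cite[Theorem~4.1]{Da1} and moves on. So there is no ``paper's own proof'' to compare your proposal against, and I can only assess your sketch on its merits.

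Your outline for case~(i) is in roughly the right spirit---additivity of $\theta^R$ lets one reduce to a computation on generators, and the ambient regular ring $S$ is where the actual cancellation must take place---but the claimed identification $\overline{G_0}(R)\cong\Cl(R)$ is not correct as stated. Reflexive modules of rank $>1$ over a two-dimensional normal domain are \emph{not} determined by their first Chern class modulo free summands, and the class $[k]\in G_0(R)$ does not obviously die in $\overline{G_0}(R)$. What one actually needs is a d\'evissage (or Bourbaki-sequence) argument reducing to $\theta^R(R/\fp,R/\fq)$ for height-one primes $\fp,\fq$, together with a separate check that $\theta^R(k,-)=0$; the latter is not automatic and requires the hypersurface hypothesis.

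Case~(ii) has a genuine gap. Your plan is to force $\dim M'+\dim N'<\dim S=5$ so that Theorem~\ref{thm:chi-quartz}\eqref{item:chi-van} applies, by ``replacing $M'$ and $N'$ with sufficiently deep syzygies'' or ``cutting down one factor by a generic regular element.'' Neither move works. Over a hypersurface, high syzygies of any module are MCM, hence have full four-dimensional support; taking more syzygies never lowers the dimension. Cutting by a regular element of $\fn$ changes the module to one over a different ring and does not preserve $\theta^R$ in any evident way. Since even after reduction to height-one cyclic modules one has $\dim(R/\fp)+\dim(R/\fq)=3+3=6>5$, the intersection-multiplicity vanishing of Theorem~\ref{thm:chi-quartz}\eqref{item:chi-van} simply does not apply, and a different mechanism is required. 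Dao's actual argument for the four-dimensional case uses substantially deeper input (Chow-group or Lefschetz-type results in the equicharacteristic setting), not a direct reduction to $\chi^S$.
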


In view of this result, Dao \cite[3.15]{Da1} conjectured the following:

\begin{conj}[Dao, 2006]\label{conjtheta} Let $(R,\fm)$ be a local ring whose completion $\widehat R$ is hypersurface in an unramified regular local ring. Assume $R$ is an \textit{even-dimensional} isolated singularity. Then $\theta^{R}(-,-)=0$.
\end{conj}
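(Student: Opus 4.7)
The plan is to reinterpret $\theta^R$ as an Euler-type pairing on the singularity category $\underline{\mathrm{MCM}}(R)$ and exploit its Calabi--Yau-like Serre duality when $\dim R$ is even. Combined with the tautological symmetry $\theta^R(M,N)=\theta^R(N,M)$ (immediate from the symmetry of $\Tor$), an antisymmetry derived from Serre duality will force the pairing to vanish.

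\medskip

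First, I would pass to the completion (lengths, and hence $\theta^R$, are preserved) and assume $R = S/(f)$ with $(S,\fn)$ an unramified regular local ring of dimension $d+1$, $d=\dim R$ even. The isolated-singularity hypothesis guarantees $\Tor_i^R(M,N)$ has finite length for all $i \gg 0$, so $\theta^R(M,N)$ is defined. Standard manipulation of the long exact sequence of $\Tor$ (cf.\ \eqref{eq:les-cor}) shows that $\theta^R$ is additive on short exact sequences and vanishes whenever one argument has finite projective dimension. Replacing $M$ and $N$ by sufficiently high syzygies changes $\theta^R$ only by a sign, so we may assume both modules are MCM. By Eisenbud's matrix-factorization theorem, $\underline{\mathrm{MCM}}(R)$ then carries a natural triangulated structure with suspension $\Sigma = \Omega^{-1}$ satisfying $\Sigma^2 \cong \mathrm{id}$.

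\medskip

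Second, I would identify $\theta^R$ with an Euler form. Via Buchweitz's realization $\underline{\Hom}_R(\Omega^n M, N) \cong \Hom_{D_{\mathrm{sg}}(R)}(M, \Sigma^n N)$ together with two-periodicity of Tate cohomology, one obtains
$$\theta^R(M,N) \;=\; \len\bigl(\underline{\Hom}_R(M, N)\bigr) \;-\; \len\bigl(\underline{\Hom}_R(\Sigma M, N)\bigr).$$
Third, I would invoke Auslander--Reiten duality: for a $d$-dimensional Gorenstein isolated singularity, the category $\underline{\mathrm{MCM}}(R)$ is Hom-finite and admits a Serre functor $\mathbb{S} = \Sigma^{d-1}$. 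When $d$ is even, $\Sigma^{d-1} \cong \Sigma$ by periodicity, so there are natural isomorphisms of finite-length $R$-modules
$$\underline{\Hom}_R(M, N) \cong \underline{\Hom}_R(N, \Sigma M)^{\vee}, \qquad \underline{\Hom}_R(\Sigma M, N) \cong \underline{\Hom}_R(N, M)^{\vee},$$
with $(-)^{\vee}$ Matlis duality. Length is preserved by $(-)^{\vee}$, so subtracting yields $\theta^R(M,N) = -\theta^R(N,M)$. Combined with the symmetry coming from $\Tor$, this forces $\theta^R(M,N) = 0$.

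\medskip

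The hardest step will be establishing Auslander--Reiten duality on $\underline{\mathrm{MCM}}(R)$ --- and in particular, identifying the Serre functor as $\Sigma^{d-1}$ --- in the generality of a mixed-characteristic unramified regular base. Classical references (Auslander, Yoshino) formulate this duality assuming $R$ is a complete local algebra over a perfect field; transferring the statement to the unramified, mixed-characteristic hypersurface requires care, perhaps via a faithfully flat base change enlarging the residue field. A secondary delicacy is avoiding any off-by-one error in the power of $\Sigma$ that identifies $\theta^R$ with its Euler-form expression: an incorrect shift would replace the sought antisymmetry by a tautology and collapse the argument.
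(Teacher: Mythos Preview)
This statement is recorded in the paper as a \emph{conjecture} of Dao, not a theorem; the paper offers no proof, noting only that Moore--Piepmeyer--Spiroff--Walker \cite{MPSW} proved the graded analogue and that Polishchuk--Vaintrob \cite{PV} and Buchweitz--Van Straten \cite{Bv} gave related results in other settings. There is therefore no argument in the paper against which to compare your attempt.

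That said, your outline has a genuine gap beyond the one you flag. Buchweitz's realization identifies $\underline{\Hom}_R(\Omega^n M,N)$ with morphisms in $D_{\mathrm{sg}}(R)$, i.e.\ with Tate \emph{Ext}; it says nothing about $\Tor$, which is how $\theta^R$ is defined. Two-periodicity alone does not convert one into the other. The actual bridge, for MCM modules over a Gorenstein ring, is
\[
\widehat{\Tor}^{\,R}_i(M,N)\;\cong\;\widehat{\Ext}_R^{\,-i-1}(M^{*},N),
\]
so that the correct relation is $\theta^R(M,N)=-\chi(M^{*},N)$ rather than $\theta^R(M,N)=\chi(M,N)$. (Over $k[\![x,y]\!]/(xy)$ with $M=R/(x)$ and $N=R/(y)$ one has $\theta^R(M,N)=1$ while your right-hand side equals $-1$; the discrepancy is hidden in this example only because $M^{*}\cong M$.) This is more than an off-by-one in the power of $\Sigma$: with the $R$-dual $(-)^{*}$ intervening, the ``symmetric from $\Tor$, antisymmetric from Serre duality, hence zero'' trick no longer closes. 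Symmetry of $\theta$ translates to $\chi(M^{*},N)=\chi(N^{*},M)$, and combining with antisymmetry of $\chi$ yields only relations such as $\chi(N,M^{*})=\chi(M,N^{*})$, not vanishing. The proofs that exist for special cases of the conjecture bring in substantially more---Hodge theory in \cite{MPSW}, Chern characters for matrix factorizations in \cite{PV}, an analytic index theorem in \cite{Bv}---than categorical Serre duality alone; and the mixed-characteristic case you rightly worry about requires still further ideas.
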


There have been important recent developments concerning the $\theta$-pairing. In particular, Moore, Piepmeyer, Spiroff and Walker \cite[Theorem 3.2]{MPSW} proved the graded version of Conjecture \ref{conjtheta} using sophisticated techniques of algebraic geometry.

One can show that $\theta^R(-,-)=0$ if $R$ is a one-dimensional hypersurface domain \cite[Proposition 3.3]{Da1}; thus  in this case every finitely generated $R$-module is rigid.   One can propose, more generally:

\begin{conj} \label{conj:rigid-onedim-ci} Let $(R,\fm)$ be a one-dimensional complete intersection domain.  Then all finitely generated $R$-modules are rigid.
\end{conj}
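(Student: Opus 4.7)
After passing to the completion, we may assume $\widehat R = T/(\underline f)$ with $(T,\fn)$ a complete regular local ring of dimension $c+1$ and $\underline f = f_1,\dots,f_c$ a regular sequence; the case $(\underline f)\not\subseteq \fn^2$ reduces the codimension, so we may induct on $c$ with base case $c=0$ given by Corollary~\ref{cor:ram-rigid}. Since $R$ is a 1-dimensional domain, $R_{(0)}$ is a field, so every module $\Tor_i^R(M,N)$ with $i\ge 1$ is torsion and, being supported only at the maximal ideal, has finite length. By Theorem~\ref{thm:c+1-rigid} the pair $(M,N)$ is already $(c+1)$-rigid, so the task is to improve this to $1$-rigidity.

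The plan is to mimic Dao's argument in the hypersurface case, where $\theta^R\equiv 0$ on a 1-dimensional hypersurface domain combines with Theorem~\ref{thm:theta} to force rigidity. Because the Betti numbers of a finitely generated module over a complete intersection of codimension $c$ grow polynomially of degree at most $c-1$, the Tor lengths satisfy the same bound in our setting, and we may consider the codimension-$c$ generalization of $\theta$, namely
\[
\eta^R_c(M,N)\;=\;\lim_{n\to\infty}\frac{1}{n^{c-1}}\sum_{i=0}^{n}(-1)^i\,\len\bigl(\Tor_i^R(M,N)\bigr).
\]
The goal splits into two claims: (a) $\eta^R_c(M,N)=0$ for every pair $(M,N)$ of finitely generated $R$-modules, and (b) vanishing of $\eta^R_c$ implies that the pair $(M,N)$ is rigid. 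For (b), a higher-codimension analog of Theorem~\ref{thm:theta} in the spirit of Dao's Theorem~\ref{thm:Long-CI} and Celikbas's Theorem~\ref{thm:Olgur-rigidity} should suffice once one exploits that every higher Tor has finite length in our 1-dimensional setting.

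For (a), the approach is induction on $c$: the base case $c=1$ is Dao's 1-dimensional hypersurface result, and for the inductive step one presents $R = S/(f_c)$ with $S = T/(f_1,\dots,f_{c-1})$, applies the change-of-rings long exact sequence~\eqref{eq:les-cor}, and compares polynomial growth rates of Tor lengths to derive an identity between $\eta^R_c$ and an appropriate invariant of $S$ that vanishes by the inductive hypothesis (after a reduction modulo a suitable minimal prime of $S$). The main obstacle is precisely this inductive step: the 2-dimensional complete intersection $S$ is generally not a domain, so Dao's vanishing result for 2-dimensional normal hypersurfaces (Theorem~\ref{thm:GR}) does not apply verbatim, and Example~\ref{eg:3-dim-A1} shows how drastically rigidity can fail once one leaves dimension one. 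Hence any viable argument must genuinely exploit the 1-dimensionality of $R$ rather than passing through higher-dimensional intermediate rings; this suggests that the inductive strategy may have to be supplemented or replaced by a direct analysis of torsion in tensor products over 1-dimensional Gorenstein domains, in the style of Huneke--Wiegand, and the fact that even this reformulation has so far resisted proof is consistent with the conjecture remaining open.
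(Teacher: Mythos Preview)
The statement is a \emph{conjecture} in the paper, not a theorem; the paper offers no proof and explicitly notes that it is a special case of an open conjecture of Dao. Your write-up ultimately acknowledges this, so in that sense it is honest. However, the strategy you sketch has a structural gap that is worth naming precisely, since you present it as if it would \emph{reduce} the conjecture to claims (a) and (b).

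Claim (b) is the problem. In the paper's framework (see the discussion following the definition of $\eta$), vanishing of $\eta_c^R(M,N)$ over a codimension-$c$ complete intersection yields only $c$-\emph{rigidity}, not rigidity. This is exactly parallel to Murthy's bound in Theorem~\ref{thm:c+1-rigid}: the $\eta$-invariant buys you one fewer consecutive vanishing, not all of them. So even if your inductive program for (a) went through, you would end with $c$-rigidity for a codimension-$c$ one-dimensional complete intersection domain, which is already contained in Theorem~\ref{thm:HJW} or Theorem~\ref{thm:Jo-fin-len-rigid} and falls well short of the conjecture. Your appeal to ``a higher-codimension analog of Theorem~\ref{thm:theta}'' is doing all the work here and is not supported by any result in the paper; Theorems~\ref{thm:Long-CI} and~\ref{thm:Olgur-rigidity} are vanishing theorems for Tor under Serre-condition hypotheses, not rigidity-from-$\eta$ theorems. (Incidentally, your normalization of $\eta_c$ uses $n^{c-1}$, whereas the paper uses $n^c$; with the paper's convention $\eta_c$ is finite and $\eta_e=0$ for $e>c$.)

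Your diagnosis of the obstacle in (a) is reasonable, but since (b) already fails to deliver rigidity, the reduction you propose does not isolate the difficulty of the conjecture.
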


Indeed, this is a very special case of Dao's Conjecture 4.2 in \cite{Da4}, for complete intersections of arbitrary dimension, concerning rigidity of modules that are free of constant rank on the punctured spectrum $\Spec R \setminus \{\fm\}$.
We have already seen that Conjecture \ref{conj:rigid-onedim-ci} can fail for one-dimensional domains that are not complete intersections. (See Examples \ref{eg:twisted} and \ref{eg:Constapel}, and consult Theorem~\ref{thm:rigid-tf}.) Actually, an example was found by Hochster and Huneke in the early 1980's:

\begin{example} Let $R=k[\![t^9,t^{11},t^{13},t^{15},t^{17},t^{19},t^{21}, t^{23}]\!]$, $I=(t^9,t^{11},t^{13},t^{21})$, and $J=(t^{15},t^{17},t^{19}, t^{23})$. Then $R$ is a one-dimensional Gorenstein domain which is not a complete intersection. Furthermore $I+J=\mathfrak{m}=(t^9, t^{11}, t^{13},t^{15}, t^{17},t^{19}, t^{21}, t^{23})$. Notice $J^{2} \subseteq IJ$ so that $I \cap J \subseteq \mathfrak{m} J=J^{2}+IJ \subseteq IJ$, i.e., $I \cap J=IJ$. Therefore, setting $M=R/I$ and $N=R/J$, we see that  $\Tor_1^{R}(M,N)=0$.

Assume now that $\Tor_2^{R}(M,N)=0$. Then $I\otimes_{R}J$ is torsion-free by \cite[Lemma 1.4]{HW1}, and it follows that $I\otimes_{R}J \cong IJ$.  Now $I\otimes_RJ$ needs $16$ generators, by Nakayama's Lemma.  On the other hand, $IJ$ is generated by the nine elements $t^{24}, t^{26}, t^{28}, t^{30}, t^{32}, t^{34}, t^{36}, t^{38}, t^{40}$.  This contradiction shows that  $\Tor_2^{R}(M,N) \neq 0$, and the pair $(M,N)$ is not rigid.
\end{example}

We refer the interested reader to Dao's survey paper \cite{surveydao} for more information on the properties and applications of the $\theta$-pairing.

\subsection*{The eta-pairing} Let $R$ be a commutative Noetherian ring, and let $M$ and $N$ be finitely generated $R$-modules. Assume that there is some non-negative integer $m$ such that $\Tor_{i}^R(M,N)$ has finite length for all $i\geq m$. In \cite{Da2} Dao introduced and studied a generalization of Hochster's $\theta$-pairing:
\begin{equation}\notag{}
\eta_e^R(M,N) = \lim_{n\to\infty}
\frac{1}{n^e}\sum_{i=m}\limits^{n}(-1)^{i} \len_{}(\Tor_i^R(M,N))
\end{equation}
Here $e$ is any positive integer, so that $\eta_{e}^{R}(M,N)$ can be infinite. However, if $(R,\fm,k)$ is a complete intersection of  codimension $c$, then $\eta_{c}^R(M,N)<\infty$, and $\eta_e^R(M,N) = 0$ if  if $e > c$ \cite[Theorem 4.3]{Da2}. Furthermore, if $\widehat R$ is a complete intersection of relative codimension $c$ in an unramified regular local ring, and if $\eta_{c}^R(M,N)=0$, then the pair $(M,N)$ is $c$-rigid.

The $\eta$-pairing is, up to a factor of two, a generalization of Hochster's $\theta$-pairing: over a hypersurface $R$, one has $2\cdot \eta^R_1(M,N) = \theta^R(M,N)$, provided $\Tor^R_i(M,N)$ has finite length for all $i\gg0$; see also \cite{CeD2}. Thus, for example, $\eta^R_1(M,N) = -\frac{1}{2}$ in Example~\ref{eg:node}.

In \cite{GORS} the authors, along with Iyengar and Piepmeyer, exploit the $\eta$ pairing and improve Theorem \ref{thm:Long-CI} under the assumption that $\eta(M,N)=0$:

\begin{theorem}[O. Celikbas, Piepmeyer, Iyengar and R. Wiegand, 2013] \label{GORSthm}
Let $(R,\fm)$ be a local ring whose completion $\widehat R$ is a complete intersection of positive codimension $c$ in an unramified regular local ring.  Let $M$ and $N$ be finitely generated $R$-modules and assume $\dim(R)\geq c$. Assume further that:
\begin{enumerate}[\rm(i)]
\item $M\otimes_{R}N$ satisfies $(S_{c})$.
\item $M$ and $N$ satisfy $(S_{c-1})$.
\item $\len(\Tor^R_i(M,N))<\infty$ for all $i\gg 0$.
\item $\eta_{c}^{R}(M,N)=0$.
\item If $c=1$, assume in addition that $\Supp(\tp{} N) \subseteq \Supp(M)$. (e.g., $N$ is torsion-free.)
\end{enumerate}
Then $\Tor^{R}_{i}(M,N)=0$ for all $i\geq 1$.
\end{theorem}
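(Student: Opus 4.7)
The strategy is to pass to the completion, induct on the codimension $c$, and combine the authors' $c$-rigidity consequence of $\eta_c^R(M,N)=0$ with a localization argument that uses Dao's vanishing theorem (Theorem~\ref{thm:Long-CI}) at primes of smaller height. First, I would pass to $\widehat R = S/(\underline f)$ with $(S,\fn)$ a complete unramified regular local ring and $\underline f = f_1,\dots,f_c$ a regular sequence in $\fn$; all of hypotheses (i)--(v) and the conclusion are stable under completion, so we may work over $\widehat R$.

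The base case $c=1$ handles a hypersurface $R$ with $M\otimes_RN$ torsion-free, $\theta^R(M,N)=2\eta_1^R(M,N)=0$, and $\Supp(\tp{}N)\subseteq\Supp M$. By Dao's Theorem~\ref{thm:theta}, the vanishing $\theta=0$ renders the pair $(M,N)$ rigid, so it suffices to produce a single index $j\ge 1$ with $\Tor_j^R(M,N)=0$. Tensoring the sequence $0\to\tp{}N\to N\to\tf{}N\to 0$ with $M$ and exploiting that $M\otimes_R\tp{}N$ is torsion while $M\otimes_RN$ is torsion-free, one reduces to the case where $N$ is already torsion-free (the support hypothesis (v) is what rules out a nonzero $\tp{}N$ at this step). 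A depth chase in the style of Theorem~\ref{thm:2nd-rigidity}, combined with the rigidity already in hand, then yields $\Tor_1^R(M,N)=0$.

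For the inductive step $c\ge 2$, I would fix a prime $\fp$ of height at most $c-1$ and examine $R_{\fp}$. Its completion is a complete intersection of relative codimension at most $c-1$ in an unramified regular local ring, and the localized Serre, finite-length, and $\eta$-vanishing hypotheses all survive at $\fp$. Applying the inductive hypothesis to $M_{\fp}$ and $N_{\fp}$ over $R_{\fp}$ gives $\Tor^R_i(M,N)_{\fp}=0$ for all $i\ge 1$, so every $\Tor^R_i(M,N)$ is supported only at primes of height at least $c$. Combining this codepth bound with the $(S_c)$-condition on $M\otimes_RN$ and the depth formula (Theorem~\ref{thm:DF}) applied at an index where we first have consecutive vanishings, I would extract an initial window $\Tor^R_1(M,N)=\cdots=\Tor^R_c(M,N)=0$; the $c$-rigidity consequence of $\eta_c^R(M,N)=0$ noted just before the theorem statement then propagates this window forward to give $\Tor^R_i(M,N)=0$ for every $i\ge 1$.

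The main obstacle I anticipate is the extraction of the initial window $\Tor^R_1=\cdots=\Tor^R_c=0$ using only an $(S_c)$-hypothesis on $M\otimes_RN$, rather than the $(S_{c+1})$-hypothesis that would permit a direct appeal to Dao's Theorem~\ref{thm:Long-CI}. This is precisely where $\eta_c=0$ must contribute substantively beyond supplying $c$-rigidity: the expected argument will trade an alternating-sum identity coming from $\eta_c=0$ against the depth bounds supplied by $(S_c)$ and the codepth-$\ge c$ information from the localization step, in order to force the low-index Tors to vanish. The $c=1$ subtlety involving hypothesis (v) -- which has no analogue for $c\ge 2$ -- reflects that for a hypersurface the tensor-product condition $(S_1)$ alone is too weak to rule out a torsion summand in $N$, a defect not present once $c\ge 2$ because then $(S_{c-1})$ already forces $M$ and $N$ to be torsion-free.
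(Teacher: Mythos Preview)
This is a survey paper, and Theorem~\ref{GORSthm} is stated without proof; it is simply quoted from \cite{GORS}.  There is therefore no proof in the paper to compare against.  That said, your proposal has genuine gaps that would prevent it from going through as written.

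First, the induction on codimension via localization breaks down.  You assert that for a prime $\fp$ of height at most $c-1$ the localization $R_\fp$ has completion of relative codimension at most $c-1$.  This is false: codimension does not drop with height.  For example, with $R=k[\![x,y,s,t]\!]/(x^2,y^2)$ (so $c=2$, $\dim R=2$) and $\fp=(x,y)$ of height $0$, the localization $R_\fp$ is still a complete intersection of codimension $2$, and moreover $\dim R_\fp=0<c$, so the hypothesis $\dim R\ge c$ fails for $R_\fp$ as well.  Hence there is nothing to which one can apply an inductive hypothesis.  The localization step \emph{can} be handled, but by a different mechanism: hypothesis~(ii) forces $M_\fp$ and $N_\fp$ to be MCM (or zero) whenever $\height\fp\le c-1$, and hypothesis~(iii) forces $\Tor_i^{R_\fp}(M_\fp,N_\fp)=0$ for $i\gg0$; one then appeals to Jorgensen's Theorem~\ref{thm:Jo-cx} (with $b=\dim R_\fp$, so the bound $d-b+1=1$) to obtain $\Tor_i^{R_\fp}(M_\fp,N_\fp)=0$ for all $i\ge1$.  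No induction on $c$ is involved.

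Second, even granting that all $\Tor_i^R(M,N)$ are supported only in height $\ge c$, your extraction of the window $\Tor_1^R=\cdots=\Tor_c^R=0$ is not an argument.  You invoke ``the depth formula (Theorem~\ref{thm:DF}) applied at an index where we first have consecutive vanishings,'' but Theorem~\ref{thm:DF} requires $\Tor_i^R(M,N)=0$ for \emph{all} $i\ge1$, precisely the conclusion you are after; it cannot be applied midstream.  Moreover, $c$-rigidity only propagates a window \emph{forward}, so the window must begin at $i=1$ to yield the conclusion; producing that specific initial window is the entire content of the theorem, and you have not indicated how the hypotheses $(S_c)$, $(S_{c-1})$, and $\eta_c=0$ combine to force it.  Your final paragraph acknowledges this as an obstacle, but the sketch offers no mechanism to overcome it.
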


\noindent A consequence of Theorem~\ref{GORSthm} is that Dao's Theorem~\ref{thm:Long-CI} is true even if the ambient regular local ring is ramified (see \cite[Corollary 5.12]{GORS}).  

The big question:  When is $\eta_c^R(M,N) = 0$?  Since the pair $(M_s, N)$ in Example~\ref{eg:Av-Jo} is not $c$-rigid, we know that $\eta^R_c(M_s,N) \ne 0$.  The problem is that this ring is not an isolated singularity.  Conjecturally (see  \cite{MPSW2}), this is the only obstruction when $c\ge 2$:

\begin{conj}[Moore, Piepmeyer, Spiroff, and Walker]\label{conj:MPSW2} Let $(R,\fm)$ be a local ring whose completion $\widehat R$ is a complete intersection of relative codimension $c$ in an unramified regular local ring.  Assume that $c\ge 2$ and that $R$ is an isolated singularity.  Then $\eta^R_c(M,N) = 0$ for all finitely generated $R$-modules $M$ and $N$.
\end{conj}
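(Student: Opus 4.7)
The plan is to begin with the standard reduction to the completion: passing to $\widehat R$ preserves lengths and hence preserves $\eta_c$, so one may assume outright that $R = S/(\underline f)$ with $(S,\fn)$ a complete unramified regular local ring and $\underline f = f_1,\ldots,f_c$ a regular sequence in $\fn^2$. The isolated-singularity hypothesis forces $\Tor_i^R(M,N)_\fp = 0$ for every $\fp \ne \fm$ and every $i \gg 0$, so $\Tor_i^R(M,N)$ has finite length for all $i \gg 0$. Together with $c \ge 1$, this ensures by \cite[Theorem 4.3]{Da2} that $\eta_c^R(M,N)$ is finite, so the statement makes sense as written.

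The primary strategy would be to realize $\eta_c^R(-,-)$ as an intersection-theoretic pairing and then exhaust the hypothesis $c \ge 2$ to force its vanishing. Concretely, one wants a Chern-character-type formula
\begin{equation}\notag
\eta_c^R(M,N) \;=\; \langle \mathrm{ch}(M),\,\mathrm{ch}(N)\rangle
\end{equation}
taking values in a Chow group (or $K$-theory with supports) of $\Spec R$, in analogy with the identification of Hochster's $\theta$ with a global intersection number on a projective hypersurface that underlies the Moore--Piepmeyer--Spiroff--Walker graded proof. Once such an interpretation is in place, the regularity of $\Spec R \setminus \{\fm\}$ (from the isolated-singularity hypothesis) reduces the computation to cycles supported at the closed point; the extra codimension available when $c \ge 2$ should then prevent those cycles from pairing nontrivially against the top-degree classes that compute $\eta_c$. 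In parallel, I would attempt an induction on $c$ via the change-of-rings long exact sequence \eqref{eq:les-cor} applied to $R \twoheadrightarrow R/(f_c)$: the alternating-sum bookkeeping of Proposition~\ref{prop:chi-van-rigid} is precisely the mechanism that should relate $\eta_c^R$ to an $\eta_{c-1}$-pairing of smaller codimension, up to an Euler-characteristic correction that one hopes to identify and control.

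The principal obstacle, and the reason this remains a conjecture, is producing the geometric pairing just described in a form that survives outside the projective/graded setting. The MPSW argument at $c=1$ depends on global cohomological input on a projective hypersurface, and there is no off-the-shelf analogue in the local (let alone mixed-characteristic) setting; raising the codimension to $c \ge 2$ introduces further cycles of intermediate codimension whose cancellation must be verified rather than assumed. The inductive route has a complementary defect: $R/(f_c)$ is generally \emph{not} an isolated singularity even when $R$ is, so the inductive hypothesis is not directly available, and one would first need to strengthen the conjecture to a relative statement whose hypothesis propagates through the sequence $R \twoheadrightarrow R/(f_c) \twoheadrightarrow \cdots$. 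Bridging either gap---transporting global-geometric vanishing into the complete local setting, or identifying an inductively stable hypothesis---is where the genuine work lies.
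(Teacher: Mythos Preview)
The statement you are addressing is Conjecture~\ref{conj:MPSW2}, and the paper does \emph{not} provide a proof of it; it is presented as an open problem. Immediately after stating the conjecture, the paper records only that the \emph{standard graded} analogue has been established by Moore, Piepmeyer, Spiroff, and Walker (with alternative arguments by Polishchuk--Vaintrob and Buchweitz--Van Straten), and then extracts a local corollary (Proposition~\ref{prop:MPSW-local}) in the special case where $R$ arises as the localization of a graded complete intersection at the irrelevant maximal ideal. There is therefore no ``paper's own proof'' to compare your proposal against.

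Your write-up is not a proof but an honest discussion of possible attack routes, and you yourself flag this (``the reason this remains a conjecture''). That is the correct assessment. Your two suggested strategies---interpreting $\eta_c$ via a Chern-character/intersection pairing, and inducting on $c$ through the change-of-rings sequence~\eqref{eq:les-cor}---are reasonable heuristics, and the obstacles you name (absence of a projective ambient space in the purely local setting; failure of the isolated-singularity hypothesis to descend to $R/(f_c)$) are exactly the known sticking points. But neither route is completed, so what you have is a research outline, not an argument. If your intent was to prove the conjecture, there is a genuine gap: no step actually establishes $\eta_c^R(M,N)=0$. If your intent was merely to survey the state of play, then your summary is accurate and aligns with what the paper reports.
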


The main theorem in the paper by Moore \emph{et al} is \cite[Theorem 4.5]{MPSW2}, which states that the \emph{standard graded} version of this conjecture is true.  (The modules $M$ and $N$ are not required to be graded.)  Polishchuk and Vaintrob~\cite[Remark 4.1.5]{PV}, and Buchweitz and Van Straten~\cite[Main Theorem]{Bv}, have since given other proofs, in somewhat different contexts, of this result.  From \cite[Theorem 4.5]{MPSW2}, one easily deduces the following local version:

\begin{proposition}
\label{prop:MPSW-local}
Let $k$ be a perfect field and $Q=k[x_{1}, \dots, x_{n}]$ the polynomial ring with the standard grading. Let
$\ul{f}=f_{1},\dots, f_{c}$ be a $Q$-regular sequence of homogeneous polynomials, with $c\geq 2$.  Put $R= A_\fm$, where $\fm = (x_1,\dots,x_n)$.  Assume that  $A_\fp$ is a regular local ring for each $\fp$ in $\Spec(A)\!\setminus\!\{\fm\}$.  Then $\eta_c^R(M,N) = 0$ for all finitely generated $R$-modules $M$ and $N$. \end{proposition}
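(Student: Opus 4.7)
The plan is to reduce the local statement to the graded statement \cite[Theorem 4.5]{MPSW2} by lifting the given finitely generated $R$-modules to finitely generated $A$-modules (where $A = Q/(\ul{f})$ is the standard graded complete intersection with $R = A_\fm$), and then verifying that the partial alternating Tor-length sums defining $\eta_c$ agree in the limit on the two sides.

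First, I would produce finitely generated $A$-modules $M'$ and $N'$ with $M'_\fm \cong M$ and $N'_\fm \cong N$. Starting from finite presentations of $M$ and $N$ over $R = A_\fm$, one clears denominators: each denominator is a unit of $R$, so multiplying on the left by a diagonal matrix whose entries are appropriate products of denominators yields presenting matrices with entries in $A$; their cokernels over $A$ are $M'$ and $N'$, and since $R$ is flat over $A$, localizing at $\fm$ recovers $M$ and $N$. The lifts need not be graded, but \cite[Theorem 4.5]{MPSW2} imposes no graded hypothesis on the modules.

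Next, I would use the isolated-singularity hypothesis to control the Tors. For every prime $\fp \in \Spec(A) \setminus \{\fm\}$, the ring $A_\fp$ is regular of dimension at most $\dim A$, hence has finite global dimension bounded by $\dim A$, so $\Tor_i^A(M',N')_\fp = 0$ for every $i > \dim A$. Since $\Tor_i^A(M',N')$ is finitely generated and supported only at $\fm$, it has finite length as an $A$-module and is annihilated by a power of $\fm$. For such $i$, the natural map $\Tor_i^A(M',N') \to \Tor_i^A(M',N')_\fm$ is an isomorphism and composition factors $A/\fm$ correspond to $R/\fm R$, so
\[
\len_R \Tor_i^R(M,N) \;=\; \len_R \bigl(\Tor_i^A(M',N')\bigr)_\fm \;=\; \len_A \Tor_i^A(M',N').
\]
Applying \cite[Theorem 4.5]{MPSW2} to $M'$ and $N'$ then gives $\eta_c^A(M',N') = 0$. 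Because the partial alternating sums defining $\eta_c^R(M,N)$ and $\eta_c^A(M',N')$ agree for all $i > \dim A$, differing only in finitely many initial terms that the factor $1/n^c$ crushes in the limit, the two $\eta$-invariants coincide, and $\eta_c^R(M,N) = 0$. The only mildly delicate step is the lifting, combined with the crucial observation that MPSW2 permits ungraded modules; the rest of the argument is routine bookkeeping with Tor-support and lengths.
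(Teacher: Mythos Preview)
The paper does not actually give a proof of this proposition; it simply asserts that ``From \cite[Theorem 4.5]{MPSW2}, one easily deduces the following local version \cite[Proposition 6.9]{GORS}'' and then states the result. Your argument supplies precisely the deduction the paper has in mind: lift $M$ and $N$ to finitely generated $A$-modules by clearing denominators in presentations, use the isolated-singularity hypothesis to see that $\Tor_i^A(M',N')$ is $\fm$-primary (hence of finite length, and unchanged upon localizing at $\fm$) for all $i>\dim A$, so that the alternating length sums defining $\eta_c^A(M',N')$ and $\eta_c^R(M,N)$ agree up to finitely many initial terms, and then invoke \cite[Theorem 4.5]{MPSW2}. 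The key point you correctly flag---that the MPSW2 theorem applies to modules that are not necessarily graded---is exactly what the paper itself emphasizes in the sentence preceding the proposition. Your proof is correct and matches the intended route.
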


Very recently, M. E. Walker \cite{Wal} has eliminated the homogeneity requirement in characteristic zero:
\begin{theorem} (Walker, 2013) \label{thm:Walker} Let $k$ be a field of characteristic zero and $Q$ a smooth $k$-algebra. Let $\ul{f}=f_1, \dots, f_c$ be a $Q$-regular sequence,  with $c\geq 2$. Put $R = Q/(f_1, \dots, f_c)$. Assume the singular locus of $R$, $\{\fp \in \Spec(R): R_{\fp} \text{ is not regular} \}$, is a finite set of maximal ideals of $R$. Then  $\eta_c^R(M,N) = 0$ for all finitely generated $R$-modules $M$ and $N$.
\end{theorem}

 Of course $\eta_c$ vanishes over any localization of such a ring $R$ at a maximal ideal. This provides strong support for Conjecture~\ref{conj:MPSW2}, at least in characteristic zero.

\section{Vanishing of Ext, the depth formula, and torsion in $M\otimes_{R}M^{\ast}$}

\subsection*{Asymptotic vanishing} Over complete intersections there is an amazing connection between vanishing of $\Tor$ and vanishing of $\Ext$ \cite[Theorem 6.1]{AvBu}:

\begin{theorem}[Avramov -- Buchweitz, 2000] \label{AvBuThm} Let $(R,\fm)$ be a  complete intersection, and let $M$ and $N$ be finitely generated $R$-modules. Then $\Ext_{R}^{i}(M,N)=0$ for all $i\gg 0$ if and only if $\Tor_{i}^{R}(M,N)=0$ for all $i\gg 0$.
\end{theorem}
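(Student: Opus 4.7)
The plan is to reduce both asymptotic vanishing conditions to the triviality of a single geometric object—the cohomological support variety of the pair $(M,N)$. First I would pass to the completion $\widehat R=S/(\underline f)$, where $(S,\fn)$ is a complete regular local ring and $\underline f=f_1,\dots,f_c$ is a regular sequence; neither $\Ext$ nor $\Tor$ is affected, so this is harmless. The central machinery will be the Eisenbud/Gulliksen cohomology operators $\chi_1,\dots,\chi_c$ attached to this presentation of $R$.

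I would next exhibit the action of the $\chi_j$ on the homological invariants: they raise degree by $2$ on $\Ext^*_R(M,N)$, lower degree by $2$ on $\Tor^R_*(M,N)$, and commute pairwise. Gulliksen's theorem, which uses that $M$ has a finite free resolution over the regular ring $S$, yields that $\Ext^*_R(M,N)$ is a finitely generated graded module over the polynomial ring $\mathcal R:=R[\chi_1,\dots,\chi_c]$. A parallel argument—via the same change-of-rings spectral sequence, or by dualizing—gives finite generation of $\Tor^R_*(M,N)$ over $\mathcal R$ as well.

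With finite generation in hand, reducing modulo $\fm$ produces finitely generated graded modules over $k[\chi_1,\dots,\chi_c]$, and I would define
\begin{equation*}
V_{\Ext}(M,N):=\Supp_{k[\chi_1,\dots,\chi_c]}\!\big(\Ext^*_R(M,N)\otimes_R k\big),
\end{equation*}
\begin{equation*}
V_{\Tor}(M,N):=\Supp_{k[\chi_1,\dots,\chi_c]}\!\big(\Tor^R_*(M,N)\otimes_R k\big),
\end{equation*}
both sitting in $\mathbb A^c_k$. Since a finitely generated graded module over a polynomial ring vanishes in high degrees if and only if its support is the irrelevant point, the $\Ext$-vanishing hypothesis translates to $V_{\Ext}(M,N)=\{0\}$ and similarly for $\Tor$. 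The theorem thus reduces to the single identity $V_{\Ext}(M,N)=V_{\Tor}(M,N)$.

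This last equality is where the real work lies and is the main obstacle. My approach would be to take a complete (i.e.\ totally acyclic) free resolution $\mathbf T\to M$, which exists because $R$ is Gorenstein and $M$ has finite G-dimension; both $\Ext^i_R(M,N)$ and $\Tor^R_i(M,N)$ then agree, for $i\gg 0$, with the Tate versions $H^i(\Hom_R(\mathbf T,N))$ and $H_i(\mathbf T\otimes_R N)$. Using that the operators $\chi_j$ act compatibly on $\mathbf T$ as chain maps shifting homological degree by $-2$, and invoking $k$-duality between the two complexes on each finite-dimensional graded piece (after passing to a minimal complete resolution), one concludes that the two graded $\mathcal R$-modules have the same annihilator, hence cut out the same variety in $\mathbb A^c_k$. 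The technical heart is setting up this duality carefully and verifying its compatibility with the $\chi_j$; once that is done, the equality $V_{\Ext}(M,N)=V_{\Tor}(M,N)$ delivers the theorem.
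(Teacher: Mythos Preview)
The paper is a survey and does not prove this theorem; it simply records the result with a citation to Avramov and Buchweitz.  So there is no in-paper argument to compare against, and I evaluate your proposal on its own merits and against the original source.

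Your reduction to support varieties is precisely the Avramov--Buchweitz framework, and everything through the equivalence ``asymptotic vanishing $\Leftrightarrow$ support variety equals $\{0\}$'' is correct and well organized.  The gap is in your final paragraph.  The ``$k$-duality'' you invoke between $\Hom_R(\mathbf T,N)$ and $\mathbf T\otimes_R N$ does not exist for a general module $N$.  With a \emph{minimal} complete resolution $\mathbf T$ it is true that $\Hom_R(\mathbf T,k)$ and $\mathbf T\otimes_R k$ are $k$-dual (both differentials vanish modulo $\fm$), but that yields only $V_{\Ext}(M,k)=V_{\Tor}(M,k)$, i.e.\ the tautology $V(M)=V(M)$.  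For arbitrary $N$ the graded pieces of $\Hom_R(\mathbf T,N)$ and $\mathbf T\otimes_R N$ are finitely generated $R$-modules, not finite-dimensional $k$-vector spaces, and there is no pairing that forces the two to have the same annihilator over $k[\chi_1,\dots,\chi_c]$.  Your phrase ``on each finite-dimensional graded piece'' is exactly where the argument collapses.

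What Avramov and Buchweitz actually establish---and this is the substantive content of their paper---is the \emph{intersection formula}
\[
V^*_R(M,N)=V^*_R(M)\cap V^*_R(N),\qquad V^*_R(M):=V^*_R(M,k).
\]
The right-hand side is manifestly symmetric in $M$ and $N$, which already gives $\Ext$-symmetry.  For the $\Tor$ statement they pass through Matlis duality, using $\Tor^R_i(M,N)^{\vee}\cong\Ext^i_R(M,N^{\vee})$ together with $V^*_R(N)=V^*_R(N^{\vee})$, so that the $\Tor$ support variety is again $V^*_R(M)\cap V^*_R(N)$.  Your scaffolding is right, but this structural ingredient---the intersection formula---is what is missing, and it is not recoverable from the complete-resolution duality you sketch.
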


An obvious consequence of Theorem \ref{AvBuThm} is ``asymptotic Ext-symmetry'' over complete intersections: $\Ext_R^i(M,N) = 0$ for all $i\gg0$ if and only if $\Ext^i_R(N,M) = 0$ for all $i\gg0$.  A local ring with asymptotic Ext-symmetry  must be Gorenstein.  Avramov and Buchweitz \cite{AvBu} asked whether either (a) the class of  rings with asymptotic Ext-symmetry or (b) the class satisfying the conclusion of Theorem~\ref{AvBuThm} is \emph{strictly}  between the class of complete intersections and the class of Gorenstein rings. This motivated Huneke and Jorgensen to define and study ``AB'' rings in \cite{HuJ}.  (The name honors both Avramov-Buchweitz \cite{AvBu} and Auslander-Bridger \cite{AuBr}.)

\begin{definition}[Huneke and Jorgensen, 2003]\label{def:AB} An \emph{AB ring} is a local Gorenstein ring possessing a constant $C$, depending only on the ring and having the following property:  If $M$ and $N$ are finitely generated $R$-modules and $\Ext^i_R(M,N) = 0$ for all $i\gg0$, then $\Ext^i_R(M,N) = 0$ for all $i\ge C$.
\end{definition}

Avramov and Buchweitz \cite[4.7]{AvBu} proved that if $M$ and $N$ are finitely generated modules over a complete intersection  $R$, and if   $\Ext^i_R(M,N) = 0$ for all $i\gg0$, then $\Ext^i_R(M,N) = 0$ for all $i\geq \dim(R)+1$. Thus complete intersections are AB rings. Huneke and Jorgensen \cite[Theorem 4.1]{HuJ} showed that AB rings have aymptotic Ext-symmetry.  They  showed also that AB rings need not be complete intersections \cite[Theorem 3.5]{HuJ}  but left open  the question of whether or not every Gorenstein local ring is an AB ring.  This question was answered negatively by Jorgensen and \c Sega \cite{JS}.  We refer the reader to  \cite[Remark 4.5]{JS05} for a diagram
\cite[(4.15)]{JS05} of known inclusions and non-inclusions among these and other related classes of rings.  In particular, the class of AB rings and  the class of rings satisfying the conclusion of Theorem~\ref{AvBuThm}  both lie strictly  between the class of complete intersections and the class of Gorenstein rings.

\subsection*{The depth formula revisited} In another application of the AB property, Christensen and Jorgensen \cite[Corollary 6.4]{CJ} proved that the depth formula \eqref{eq:DF} holds for finitely generated modules over AB rings.  Since complete intersections are AB rings, this recovers the result of Huneke and Wiegand \cite[Proposition 2.5]{HW1}, stated above as Theorem~\ref{thm:DF}.

Working in the bounded derived category over a local ring $R$,
Foxby \cite{FoxbyJPAA}, \cite{Foxby} developed the notions of depth and projective dimension for complexes.  In \cite{Foxby} he
 relaxed the condition of Tor-independence and obtained a \emph{derived depth formula}  \cite[Lemma 2.1]{Foxby}; A special case of his result is the following: let $M$ and $N$ be $R$-modules (not necessarily finitely generated) and let $F_{\bullet}$ be a projective resolution of $M$. If $\pd_{R}(M)<\infty$, then $\depth(M)+\depth(N)=\depth(R)+\depth(F_{\bullet}\otimes N)$.  In case $\Tor^{R}_{i}(M,N)=0$ for all $i\geq 1$,  $\depth(F_{\bullet}\otimes_{R} N)=\depth(M\otimes_{R}N)$ so that Foxby's formula recovers Auslander's depth formula; see \cite{CJ}, \cite{Foxby}, and \cite{Iy} for details.

\subsection*{Torsion in $M\otimes_RM^*$}  Recall  \cite[Theorem 3.1]{HW1}, the main theorem of \cite{HW1}:

\begin{theorem}[Huneke and Wiegand, 1994] \label {HW1-main-thm} Let $M$ and $N$ be finitely generated modules over a hypersurface $R$, and assume that either $M$ or $N$ has rank.  If $M\otimes_RN$ is MCM, then both $M$ and $N$ are MCM, and either $M$ or $N$ is free.
\end{theorem}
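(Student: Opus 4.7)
The plan is to argue in three phases: first, extract Tor-vanishing from the reflexivity of $M \otimes_R N$; second, apply the depth formula to conclude both $M$ and $N$ are MCM; third, derive freeness, which is the main obstacle. Assume without loss of generality that $M$ has rank.

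Since $R$ is a hypersurface (hence Gorenstein) and $M \otimes_R N$ is MCM, the tensor product satisfies Serre's condition $(S_2)$, and is therefore reflexive. Theorem~\ref{thm:2nd-rigidity} then gives at once that $M$ and $N$ are Tor-independent, that $M$ is torsion-free, and that $N$ is reflexive; in particular both are nonzero. Applying the depth formula (Theorem~\ref{thm:DF}), valid for Tor-independent modules over complete intersections, I obtain
\[
\depth M + \depth N \;=\; \depth(M \otimes_R N) + \depth R \;=\; 2\dim R.
\]
Since each summand on the left is at most $\dim R$, both must equal $\dim R$, so $M$ and $N$ are MCM.

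To conclude that $M$ or $N$ is free, I would argue by contradiction: suppose neither is free. Pass harmlessly to the completion, writing $R = S/(f)$ with $(S,\mathfrak n)$ a complete regular local ring and $f \in \mathfrak n$ a non-zerodivisor. Because $\depth_S M = \depth_R M = \dim R = \dim S - 1$, the Auslander--Buchsbaum formula gives $\pd_S M = 1$, and similarly $\pd_S N = 1$; neither is $S$-free, since $f$ annihilates both. Both therefore admit square matrix factorizations over $S$, say $(A,B)$ for $M$ and $(C,D)$ for $N$, with $AB = BA = fI$ and $CD = DC = fI$.

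To finish, I would exploit the change-of-rings long exact sequence~\eqref{eq:les-cor} to translate the vanishing $\Tor_i^R(M,N) = 0$ for all $i \geq 1$ into $\Tor_i^S(M,N) = 0$ for $i \geq 2$ together with an isomorphism $\Tor_1^S(M,N) \cong M \otimes_R N \neq 0$. Combining this numerical data with the derived depth formula over $S$ (which applies because $\pd_S M$ and $\pd_S N$ are finite) and a rank-comparison between the square matrix factorizations $(A,B)$ and $(C,D)$---enabled by the hypothesis that $M$ has rank---one then produces the desired contradiction. This last step is the main obstacle; I expect the argument to mirror the matrix-factorization rank-counting analysis of the original Huneke--Wiegand proof, where the vanishing of every $\Tor_i^R$ forces incompatible constraints on the Fitting invariants of $A$ and $C$ unless one of the matrices is already invertible over $R$, i.e., unless $M$ or $N$ is already free.
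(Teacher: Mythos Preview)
The paper you are working from is a survey, and it does not prove Theorem~\ref{HW1-main-thm}; the result is only stated, with a pointer to \cite[Theorem~3.1]{HW1}.  So there is no in-paper proof to compare against, and I will assess your argument on its own terms.

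Your first two phases are correct and are exactly the standard reduction: invoke Theorem~\ref{thm:2nd-rigidity} to obtain Tor-independence, then use the depth formula (Theorem~\ref{thm:DF}) to force both $M$ and $N$ to be MCM.  These steps are already packaged in this survey, and there is no circularity---in \cite{HW1}, Theorem~2.7 is established before, and independently of, Theorem~3.1.

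Phase~3, however, is not a proof; it is a promissory note, and you say as much.  You correctly extract from the change-of-rings sequence~\eqref{eq:les-cor} that $\Tor_1^S(M,N)\cong M\otimes_RN\ne 0$ and $\Tor_i^S(M,N)=0$ for $i\ge 2$, but nothing you write after that constitutes an argument.  The ``derived depth formula over $S$'' produces no contradiction: with $\pd_SM=\pd_SN=1$ and $\depth_S(M\otimes_RN)=\dim R=\dim S-1$, every depth count you can write down balances perfectly.  Likewise, ``rank-comparison between the matrix factorizations $(A,B)$ and $(C,D)$'' and ``incompatible constraints on the Fitting invariants'' are phrases, not steps: you never say which invariant is being computed in two ways, nor why the answers disagree.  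Localizing your four-term exact sequence $0\to M\otimes_RN\to N^a\to N^a\to M\otimes_RN\to 0$ at a minimal prime just gives $0\to N_\fp^{r}\to N_\fp^{a}\to N_\fp^{a}\to N_\fp^{r}\to 0$, whose alternating length count is identically zero---no information.

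The freeness step is the entire content of the theorem, and the actual Huneke--Wiegand argument is not a quick rank count on $A$ and $C$.  It proceeds by induction on $\dim R$, with the substantive work concentrated in low dimension, and uses the rank hypothesis on $M$ in a sharper way than anything you have sketched.  As written, your proposal has a genuine gap precisely at the point where the theorem stops being formal.
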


The theorem breaks down for complete intersections: the ring $R=k[\![t^4,t^5,t^6]\!]$ is a complete intersection \cite{Her}, and the tensor product of the ideals $I = (t^4,t^5)$ and $J= (t^4,t^6)$ is torsion-free \cite[Example 4.3]{HW1}.  It seems much more difficult (in fact, impossible, so far!) to find such examples where $I$ and $J$ are non-principal ideals and $J \cong I^{-1}$.  More generally, one can ask:

\begin{conj}[Huneke and Wiegand, 1994]\label{conj:HW} Let $R$ be a local domain, and let $M$ be a finitely generated $R$-module.  If $M\otimes_RM^*$ is MCM, must $M$ be free?
\end{conj}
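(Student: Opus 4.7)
My strategy is to mimic the proof of Theorem~\ref{HW1-main-thm} as closely as possible: first settle the complete hypersurface case, and then attempt to propagate the conclusion by combining the depth formula with rigidity results.

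I would begin by passing to the completion and observing that, since $R$ is a domain, $M$ and $M^{*}$ share a well-defined rank. Because $M\otimes_{R}M^{*}$ is MCM and hence torsion-free, the argument of Lemma~\ref{lem:tf-reduc} (applied to the torsion sequence of $M$ tensored with $M^{*}$) forces $(\tp{}M)\otimes_{R}M^{*}=0$, from which I would deduce that $M$ itself is torsion-free, so that $M$ embeds into $M^{**}$.

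The crucial step is to establish Tor-independence: $\Tor_{i}^{R}(M,M^{*})=0$ for all $i\ge 1$. Granting this, the depth formula (Theorem~\ref{thm:DF}) would give
\[
\depth M + \depth M^{*} \;=\; \depth R + \depth(M\otimes_{R}M^{*}) \;=\; 2\dim R,
\]
so both $M$ and $M^{*}$ are MCM. Over a hypersurface, Theorem~\ref{HW1-main-thm} applied to the pair $(M,M^{*})$ then forces one of them to be free; and the case ``$M^{*}$ free'' reduces to the desired conclusion because an MCM module over a Gorenstein ring is reflexive, so $M\cong M^{**}$ is free whenever $M^{*}$ is.

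The main obstacle is the Tor-vanishing step outside the hypersurface world. For complete intersections of codimension $c$, I would try to prove $\eta_{c}^{R}(M,M^{*})=0$---perhaps exploiting the evident self-duality of the pair $(M,M^{*})$---and then invoke Theorem~\ref{GORSthm}; Proposition~\ref{prop:MPSW-local} and Conjecture~\ref{conj:MPSW2} suggest this is at least feasible in the standard graded isolated-singularity setting. Beyond complete intersections there is no comparable rigidity machinery, and Example~\ref{eg:codim-3} shows that rigidity can fail already in codimension three. I therefore expect that any proof for a general local domain must exploit the self-dual structure of $M\otimes_{R}M^{*}$ (for instance, through the evaluation map $M\otimes_{R}M^{*}\to\Hom_{R}(M,M)$ and the associated trace ideal) rather than a purely generic Tor-vanishing argument.
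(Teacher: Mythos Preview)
The statement you are attempting to prove is labeled \emph{Conjecture}~\ref{conj:HW} in the paper, and the authors explicitly say that ``both the special and the general cases are still wide open.''  There is therefore no proof in the paper to compare your proposal against.  What you have written is not a proof either, and you are candid about this: your ``crucial step'' of establishing Tor-independence of $M$ and $M^{*}$ over an arbitrary local domain is left entirely unproved, and you correctly identify that outside the hypersurface (or, conjecturally, isolated-singularity complete intersection) setting there is no available rigidity mechanism to carry it through.

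A couple of smaller remarks on the parts of your sketch that do go through.  For hypersurface domains the conjecture is already a direct consequence of Theorem~\ref{HW1-main-thm}: over a domain every module has rank, so that theorem gives immediately that $M$ and $M^{*}$ are MCM and one of them is free; your reflexivity argument then finishes the job.  You do not need the detour through Tor-independence and the depth formula in that case.  Also, be careful with ``passing to the completion'': the completion of a local domain need not be a domain, so rank arguments can become delicate there.  Finally, your deduction that $(\tp{}M)\otimes_{R}M^{*}=0$ forces $M$ torsion-free tacitly assumes $M^{*}\neq 0$; if $M$ is a nonzero torsion module then $M^{*}=0$ and $M\otimes_{R}M^{*}=0$, so one must either exclude this degenerate case or handle it separately.
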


\noindent Here $M^*$ denotes the dual $\Hom_R(M,R)$.  Actually, Huneke and Wiegand \cite[pp. 473--474]{HW1} made this conjecture for torsion-free modules over one-dimensional Gorenstein domains, but both the special  and  the general cases are still wide open.

Motivated by this conjecture and its connection to the celebrated Auslander-Reiten Conjecture \cite{AR}, Celikbas and Takahashi \cite{CT} introduced the following condition on a local ring $(R,\fm)$:

\begin{quote}
\begin{enumerate}
\item[$\hwc$]
For every finitely generated  torsion-free $R$-module $M$, if $M\otimes_{R}M^{\ast}$ is reflexive, then $M$ is free.
\end{enumerate}
\end{quote}

The following example (see \cite[Example 3.5]{Da2} and \cite[Example 4.1]{HW1}) indicates why the tensor product is required to be reflexive, rather than just torsion-free:

\begin{example}\label{p0}
Let $R=k[[x,y,u,v]]/(xu-yv)$ and let $I=(x,y)R$. Then $R$ is a three-dimensional hypersurface with an isolated singularity and $I$ is MCM. Moreover $I^{\ast}\cong (x,v)R$ and hence $I\otimes_{R}I^{\ast}\cong (x,y,u,v)R$. Therefore $\depth(I\otimes_{R}I^{\ast})=1$, i.e., $I\otimes_{R}I^{\ast}$ is torsion-free, but $I\otimes_{R}I^{\ast}$ is not reflexive.
\end{example}

The \emph{Auslander-Reiten conjecture} \cite{AR}, transplanted to commutative algebra, asserts that every local ring satisfies the \emph{Auslander-Reiten condition}:
\begin{quote}
\begin{enumerate}
\item[$\arc$]
If $M$ is a finitely generated $R$-module such that $\Ext^{i}_{R}(M,M\oplus R)=0$ for all $i\geq 1$, then $M$ is free.
\end{enumerate}
\end{quote}
It is known that complete intersections satisfy $\arc$ \cite{ADS}.
Here is the connection between the Huneke-Wiegand conjecture and the commutative version of the Auslander-Reiten conjecture \cite[Theorem 1]{A}, \cite[Proposition 5.10]{CT}:

\begin{proposition}[Araya, 2009; Celikbas and Takahashi, 2011]\label{HWC-ARC}
Let $R$ be a local Gorenstein ring, and consider the following statements:
\begin{enumerate}[(i)]
\item
$R$ satisfies $\hwc$.
\item
$R_\fp$ satisfies $\hwc$ for every prime ideal $\fp$ with $\height \fp \le 1$.
\item
$R$ satisfies $\arc$.
\item $R_\fp$ satisfies $\arc$ for every prime ideal $\fp$ with $\height \fp \le 1$.
\end{enumerate}
Then $(ii) \Longrightarrow (i) \Longrightarrow (iii) \Longleftarrow (iv)$.
\end{proposition}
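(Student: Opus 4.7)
The plan is to establish each of the three implications in turn, using throughout the fact that over a Gorenstein local ring, reflexivity coincides with Serre's condition $(S_2)$, and that $\Ext_R^i(M,R)=0$ for $i\geq 1$ is equivalent to $M$ being totally reflexive (G-dimension zero).

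For $(i)\Rightarrow(iii)$, which is the conceptual heart of the proposition, suppose $\Ext_R^i(M,M\oplus R)=0$ for all $i\geq 1$. The vanishing of $\Ext_R^i(M,R)$ makes $M$ totally reflexive, hence torsion-free, and the same vanishing gives that $M^\ast$ is totally reflexive too. The remaining vanishing of $\Ext_R^i(M,M)$ must be converted into reflexivity of $M\otimes_R M^\ast$. For this I would invoke the Auslander--Bridger four-term exact sequence
$$0\to\Ext_R^1(\operatorname{Tr} N,L)\to L\otimes_R N\to\Hom_R(N^\ast,L)\to\Ext_R^2(\operatorname{Tr} N,L)\to 0$$
with $N=M^\ast$ and $L=M$. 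Total reflexivity of $M$ identifies $N^\ast=M^{\ast\ast}$ with $M$, so the middle map becomes the evaluation $M\otimes_R M^\ast\to\operatorname{End}_R(M)$; a standard stable-category manipulation identifies $\operatorname{Tr} M^\ast$ with a syzygy shift of $M$, and the hypothesis $\Ext_R^{\geq 1}(M,M)=0$ then forces both end terms to vanish. Hence the evaluation map is an isomorphism. A separate depth chase using the free resolution of $M$ and $\Ext_R^{\geq 1}(M,M)=0$ (together with the MCM property of $M$) shows that $\operatorname{End}_R(M)$ is reflexive; consequently so is $M\otimes_R M^\ast$, and $\hwc$ yields that $M$ is free.

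For $(ii)\Rightarrow(i)$, let $M$ be torsion-free with $M\otimes_R M^\ast$ reflexive. Since $\Hom$ commutes with localization for finitely generated modules and reflexivity is $(S_2)$ (hence local) over the Gorenstein $R$, the hypotheses pass to every localization $R_\fp$: $M_\fp$ is torsion-free and $M_\fp\otimes_{R_\fp}(M_\fp)^\ast$ is reflexive. Applying $(ii)$ at each $\fp$ with $\height\fp\leq 1$ gives that $M_\fp$ is $R_\fp$-free, i.e., $M$ is free in codimension one. Promoting this to global freeness uses that the trace map on $\operatorname{End}_R(M)\cong M\otimes_R M^\ast$ (well-defined on the reflexive hull and computed in codimension one) reads off the constant rank of $M$; combined with the global $(S_2)$-property, this produces a split surjection onto $R$ and hence a free direct summand of $M$, after which Noetherian induction on the rank of $M$ closes the argument. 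The implication $(iv)\Rightarrow(iii)$ follows the same blueprint: localize the vanishing $\Ext_R^i(M,M\oplus R)=0$ and apply $\arc$ for $R_\fp$ at primes of height at most one to secure codimension-one freeness of $M$, while the global vanishing, together with the evaluation isomorphism $M\otimes_R M^\ast\cong\operatorname{End}_R(M)$ and the reflexivity of $\operatorname{End}_R(M)$ established in the proof of $(i)\Rightarrow(iii)$, feeds the same trace-and-split argument.

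The main obstacle shared by the two localization arguments is this globalization step: passing from codimension-one freeness to global freeness. This is where the reflexivity of $\operatorname{End}_R(M)$ (equivalently, of $M\otimes_R M^\ast$) is essential, as the trace map is only well defined on the reflexive hull, and verifying that it gives a genuine split surjection onto $R$ requires careful interplay of the $(S_2)$-property with the Gorenstein structure. Once a free summand is extracted, the argument descends to a smaller module and the Noetherian induction closes.
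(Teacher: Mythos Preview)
Your argument for $(i)\Rightarrow(iii)$ contains a genuine error, and in fact a tell-tale one: as written, it never uses hypothesis (i).  You claim that the end terms of the Auslander--Bridger sequence
\[
0\to\Ext_R^1(\operatorname{Tr} M^{\ast},M)\to M\otimes_R M^{\ast}\to\Hom_R(M,M)\to\Ext_R^2(\operatorname{Tr} M^{\ast},M)\to 0
\]
are killed by the hypothesis $\Ext_R^{\ge 1}(M,M)=0$.  But $\operatorname{Tr} M^{\ast}$ is a \emph{cosyzygy} of $M$, not a syzygy: from $0\to M^{\ast}\to F_0^{\ast}\to F_1^{\ast}\to\operatorname{Tr} M\to 0$ one gets $M^{\ast}\simeq\Omega^2\operatorname{Tr} M$, hence $\operatorname{Tr} M^{\ast}\simeq\Omega^{-2}M$ in the stable category.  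A direct computation with the cosyzygy sequences then gives $\Ext_R^2(\operatorname{Tr} M^{\ast},M)\cong\underline{\Hom}_R(M,M)$, the stable endomorphism module, which vanishes precisely when $M$ is free.  So the right-hand end term does \emph{not} vanish from your hypotheses, and indeed the evaluation map being an isomorphism is \emph{equivalent} to freeness of $M$ (this is exactly criterion (a) in the paper).  Had your argument worked, you would have established $\arc$ for all Gorenstein local rings unconditionally---that is, the Auslander--Reiten conjecture---which remains open.

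The paper's route for $(i)\Rightarrow(iii)$ is quite different and actually uses (i): from $\Ext_R^i(M,R)=0$ for $1\le i\le d$ one gets (via local duality) that $M$ is MCM; then the Huneke--Jorgensen theorem (Theorem~\ref{thm:HuJ}) turns $\Ext_R^i(M,M)=0$ for $1\le i\le d$ into the statement that $M\otimes_R M^{\ast}$ is MCM, hence reflexive; now $\hwc$ applies.  For $(ii)\Rightarrow(i)$ the paper also takes a much shorter path than your trace-and-split induction: one considers the evaluation map $M\otimes_R M^{\ast}\to\Hom_R(M,M)$, notes that the source is reflexive by hypothesis and the target torsion-free, and invokes the elementary fact that such a map is an isomorphism as soon as it is one in codimension $\le 1$; hypothesis (ii) supplies exactly that, and the freeness criterion (a) finishes.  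Your globalization step via splitting off free summands is not needed.
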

The proof  is short and is not explicitly given in \cite{CT}; hence we give it here.  The implication $(iii) \Longleftarrow (iv)$ is due to Araya \cite{A}. We will show that $(ii)\Longrightarrow(i)\Longrightarrow(iii)$. The key ideas, the first two used by
Auslander \cite[proof of Proposition 3.3]{Au}, are:
\begin{enumerate}[(a)]
\item The map $M\otimes_RM^* \to \Hom_R(M,M)$ taking $x\otimes f$ to the endomorphism
$y\mapsto f(y)x$ is an isomorphism if and only if $M$ is free.
\item A homomorphism $U \to V$, with $U$ reflexive and $V$ torsion-free, is an isomorphism if and only if it is an isomorphism when localized at all prime ideals of height at most one (See, e.g., \cite[Lemma 5.11]{LW}.)
\item The following theorem from \cite[Theorem 5.9]{HuJ}:
\end{enumerate}

\begin{theorem}[Huneke and Jorgensen, 2003]\label{thm:HuJ} Let $R$ be a $d$-dimensional local Gorenstein ring, and let $M$ and $N$ be MCM $R$-modules.  If $\Ext^i_R(M,N) = 0$ for all $i=1,\dots, d$, then $M\otimes_RN^*$ is MCM.
\end{theorem}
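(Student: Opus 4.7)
The plan is induction on $d=\dim R$; the base case $d=0$ is immediate since every finitely generated module over an Artinian local ring is trivially maximal Cohen--Macaulay.

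For the inductive step, assume $d\geq 1$ and the conclusion holds in all dimensions less than $d$. After harmlessly completing $R$, I select $x\in\fm$ that is regular on $R$, $M$, $N$, and $N^*$. This is possible by prime avoidance: $R$ is Cohen--Macaulay and $M$, $N$, $N^*$ are MCM (the last since $R$ is Gorenstein), so all their associated primes lie among the finitely many minimal primes of $R$, none of which equals $\fm$. Set $\bar R=R/(x)$, $\bar M=M/xM$, $\bar N=N/xN$; then $\bar R$ is Gorenstein of dimension $d-1$, both $\bar M$ and $\bar N$ are MCM over $\bar R$, and using $\Ext^1_R(N,R)=0$ (automatic since $N$ is MCM over Gorenstein) one checks that $N^*/xN^*\cong (\bar N)^*$.

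Applying $\Hom_R(M,-)$ to $0\to N\xrightarrow{x} N\to \bar N\to 0$ and using $\Ext^i_R(M,N)=0$ for $1\leq i\leq d$, the long exact sequence forces $\Ext^i_R(M,\bar N)=0$ for $1\leq i\leq d-1$. The standard change-of-rings isomorphism $\Ext^i_R(M,\bar N)\cong \Ext^i_{\bar R}(\bar M,\bar N)$ (valid since $x$ is $M$-regular and annihilates $\bar N$) transports this vanishing to $\bar R$. The inductive hypothesis then gives that $\bar M\otimes_{\bar R} (\bar N)^*$ is MCM over $\bar R$, so via the base-change identification $\bar M\otimes_{\bar R}(\bar N)^*\cong (M\otimes_R N^*)/x(M\otimes_R N^*)$ we conclude $\depth_R\bigl((M\otimes_R N^*)/x(M\otimes_R N^*)\bigr)=d-1$.

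The proof is complete once we know $x$ is a non-zero-divisor on $M\otimes_R N^*$, which is the main obstacle. The plan is to handle it by a punctured-spectrum argument that elegantly closes the induction: for any $\fp\in \Spec R\setminus\{\fm\}$ lying in the support of $M\otimes_R N^*$, localize at $\fp$. Then $R_\fp$ is Gorenstein of dimension $\height\fp<d$; since $R$ is equidimensional Cohen--Macaulay, the modules $M_\fp$, $N_\fp$, and $(N^*)_\fp=(N_\fp)^*$ are MCM over $R_\fp$; and the Ext-vanishing hypothesis localizes to $\Ext^i_{R_\fp}(M_\fp,N_\fp)=\Ext^i_R(M,N)_\fp=0$ for $1\leq i\leq \height\fp$. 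The inductive hypothesis applied at $R_\fp$ then gives that $(M\otimes_R N^*)_\fp$ is MCM over $R_\fp$, hence has depth $\height\fp\geq 1$. Therefore $\Ass_R(M\otimes_R N^*)$ contains no prime of positive height, i.e., is contained in the minimal primes of $R$, and since $x$ was chosen to avoid these, $x$ is $M\otimes_R N^*$-regular. Combining this with the depth calculation above yields $\depth_R(M\otimes_R N^*)=d$, so $M\otimes_R N^*$ is MCM, completing the induction.
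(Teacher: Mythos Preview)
The paper does not prove this theorem; it is quoted from \cite[Theorem~5.9]{HuJ} and used as a black box (item (c)) in the proof of Proposition~\ref{HWC-ARC}. So there is no argument in the paper to compare against, and your proposal must be judged on its own merits.

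There is a genuine gap. Your punctured-spectrum step shows only that no \emph{non-maximal} prime of positive height lies in $\Ass_R(M\otimes_RN^*)$; you never address whether $\fm$ itself is an associated prime. The sentence ``Therefore $\Ass_R(M\otimes_R N^*)$ contains no prime of positive height'' is therefore unjustified, and with it the crucial claim that $x$ is regular on $M\otimes_RN^*$.

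The gap is already fatal at $d=1$. In that case the punctured spectrum contains only minimal primes, so your localization step is vacuous; and ``MCM over the Artinian ring $\bar R$'' is an empty condition, so the reduction modulo $x$ yields nothing either. Thus the case $d=1$ --- that $\Ext^1_R(M,N)=0$ forces $M\otimes_RN^*$ to be torsion-free over a one-dimensional local Gorenstein ring --- is simply not established by your argument. Since both the mod-$x$ reduction and the localization at height-one primes in higher dimensions invoke the theorem in strictly smaller dimension, the missing $d=1$ case brings down the entire induction. (For $d\ge2$ one can, with extra work, exclude $\fm$ from $\Ass_R(M\otimes_RN^*)$ by combining $\depth\bigl((M\otimes_RN^*)/x(M\otimes_RN^*)\bigr)=d-1\ge1$ with a Nakayama argument on $H^0_\fm(M\otimes_RN^*)$; but this still presupposes the $d-1$ case and hence ultimately the unproved $d=1$ statement.) A genuinely different argument is needed in dimension one.
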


\begin{proof}[Proof of Proposition \ref{HWC-ARC}]
Set $d=\dim(R)$. Since $\Hom_R(M,M)$ is torsion-free whenever $M$ is torsion-free, the implication (ii)$\implies$(i) follows from (a) and (b).  Suppose now that (i) holds, and let $M$ be a finitely generated  $R$-module such that $\Ext^i_R(M,M\oplus R) = 0$ for all $i\ge 1$.  The vanishing of $\Ext_R^i(M,R)$ for all $i=1, \dots, d$ implies, by local duality \cite[3.5.7---3.5.9]{BH} that $M$ is MCM.   Now (c) and the fact that $\Ext^i_R(M,M)=0$ for all $i = 1,\dots, d$ implies that $M\otimes_RM^*$ is MCM. Since $R$ is Gorenstein, $M\otimes_RM^*$ is reflexive, and now (i) implies that $R$ is free.
\end{proof}

Notice that $\arc$ holds for the ring $k[\![x,y]\!]/(xy)$ (since it is a complete intersection), but $\hwc$ does not:  With $M = R/(x)$, we have $M\otimes_RM^* \cong M\otimes_RM \cong M$.  We do not know whether or not $\hwc$ and $\arc$ are equivalent for local Gorenstein \emph{domains}.

\section*{Acknowledgments}
The authors would like to thank Hailong Dao,  Srikanth Iyengar, and Mark Walker for their valuable comments during the preparation of this paper.  Thanks also to the referee for a careful reading and for several suggestions that have improved the paper.

Part of this work was completed when Celikbas visited the  University of Nebraska--Lincoln in October 2012 and January 2013. He is grateful for the kind hospitality of the UNL Department of Mathematics.

\bibliographystyle{plain}

\end{document}